\theoremstyle{plain}
\newtheorem{thm}{Theorem}[section]
\theoremstyle{plain}
\newtheorem{lem}[thm]{Lemma}
\newtheorem{prop}[thm]{Proposition}
\newtheorem{cor}[thm]{Corollary}
\theoremstyle{definition}
\newtheorem{defi}{Definition}[section]
\newtheorem{rem}{Remark}[section]
\newtheorem*{maintheorem*}{Main Theorem}
\newenvironment{Assumptions}
{
\setcounter{enumi}{0}

\begin{enumerate}}
{\end{enumerate} }
\newenvironment{Assumptions2}
{
\setcounter{enumi}{0}

\begin{enumerate}}
{\end{enumerate} }
\newcommand{\R}{\ensuremath{\mathbb{R}}}
\newcommand{\goto}{\ensuremath{\rightarrow}}
\newcommand{\eps}{\ensuremath{\varepsilon}}
\numberwithin{equation}{section} \allowdisplaybreaks
\title[Large Deviations Principle]
{Stochastic evolutionary $p$-Laplace equation: Large Deviation Principles and Transportation Cost Inequality}
\date{}
\author[ Kavin]{Kavin R}
\address[Kavin R] {\newline 
Department of Mathematics,
Indian Institute of Technology Delhi,
Hauz Khas, New Delhi, 110016, India.}
\email[] {maz198757@iitd.ac.in}
\author[A. K. Majee]{Ananta K. Majee}
\address[Ananta K. Majee]{\newline
Department of Mathematics,
Indian Institute of Technology Delhi,
Hauz Khas, New Delhi, 110016, India. }
\email[]{majee@maths.iitd.ac.in}
\keywords{Large deviation principle; weak convergence method; Transportation inequality; Girsanov transformation; evolutionary $p$-Laplace equation.}
\thanks{}
\numberwithin{equation}{section} \allowdisplaybreaks
\begin{document}
\begin{abstract}
    In this paper, we establish large deviation 
principle for the strong solution of evolutionary $p$-Laplace equation driven by small multiplicative Brownian noise, where the weak convergence approach plays a key role. Moreover, by using Girsanov transformation  along with $L^1$-contraction approach, we show the quadratic transportation cost inequality for the strong solution of the underlying problem.

\end{abstract}
\maketitle
\section {Introduction}
Let $(\Omega, \mathbb{P}, \mathcal{F}, \{ \mathcal{F}_t\}_{t \geq 0})$
is a filtered probability space satisfying the usual hypothesis. We are interested in the theory of large deviation principle, which concerns the asymptotic behaviour of remote tails of some sequence of probability distribution, for the solution of the following evolutionary $p$-Laplace equation perturbed by multiplicative  Brownian noise:
\begin{equation}\label{problem1.1}
      \begin{aligned}
            du  - {\rm div}_x ( |\nabla u|^{p-2}\nabla u +\vec{f}(u)) \, dt &= H(u)\, dW \quad  &in& \ \ \Omega \times D_{(0,T)}, \\
             u &= 0 \quad &on& \ \ \Omega \times \partial D_{(0,T)}, \\
            u(0,.) &= u_0(.) \in L^2(D) &in& \ \ \Omega \times D,
        \end{aligned}
\end{equation}
where $p > 2$, $ D_{(0,T)}: = (0,T) \times D$ with $T > 0$ fixed,  $D \subset \mathbb{R}^d$ is a bounded domain with Lipschitz boundary $\partial D$, and $ \partial D_{(0,T)}:=(0,T)\times \partial D$. Moreover,
$\vec{f} : \mathbb{R} \rightarrow \mathbb{R}^d$ is a  given flux function and $ H : \mathbb{R} \rightarrow \mathbb{R}$ is a given diffusion function.  Furthermore, $W$ is a $1$-dimensional Brownian motion defined on the  filtered probability space $(\Omega, \mathbb{P}, \mathcal{F}, \{ \mathcal{F}_t\}_{t \geq 0})$.
\vspace{0.1cm}

 The problem \eqref{problem1.1} could be viewed as evolutionary $p$-Laplace equation perturbed by nonlinear flux function and Brownian noise. This type of equation makes an appearance in the field of physics, mechanics and biology \cite{dib,wuyin}. The presence of non-linearity in the drift  and diffusion terms in equation prevents us to define a semi-group solution \cite{prato}. In the theory of stochastic evolution equations two variety of solution concept are considered specifically strong solution and weak solution. In \cite{gv1}, the authors have established well-posedness theory for the strong solution of  \eqref{problem1.1}. By using the  techniques of  semi-implicit time discretization, Skorokhod and Prokhorov theorem, they first established existence of martingale/weak solution.  Moreover, employing the Gy{\"o}ngy characterization of convergence in probability and pathwise uniqueness (which was established by $L^1$- contraction method) of weak solution,  the authors established well-posedness of strong solution of \eqref{problem1.1} in the sense of probability.  We also refer \cite{maj}, where well-posedness theory of martingale solution of the evolutionary $p$-Laplace equation driven by multiplicative L{\'e}vy noise was established. 

\vspace{0.2cm}

Theory of large deviation principle~(LDP in short)  makes an appearance in the field of probability and statistics; see  \cite{varadha1,varadhan, freidlin, stroock, dembo, Ellis-1985, Stroock-1989} and reference therein. In the past three decades, there are quite a few results  available over LDP for stochastic partial differential equations~(SPDEs); see e.g., \cite{liu, dong1, dong2, matous, rock, ren} and references therein, based on either weak convergence method \cite{dupuis, budhi} or 
exponential equivalence method. Using exponentially equivalent technique the authors, in \cite{rock}, established LDP for the solution of the stochastic tamed Navier-Stokes equations. In \cite{ren}, Ren and Zhang studied Freidlin-Wentzell's large deviation for stochastic evolution equation in the evolution triple. LDP  for stochastic evolution equation involving general monotone drift  was studied by Liu in \cite{liu}. The study of LDP was carried out for 3D stochastic primitive equation by Dong, Zhai and Zhang in \cite{dong1}, stochastic scalar conservation laws by Dong, Wu and Zhang in \cite{dong2}, Obstacle problems of quasi-linear SPDEs by Matoussi, Sabbagh and Zhang in \cite{matous} via weak convergence method.

 \vspace{0.2cm}

For problem \eqref{problem1.1},  exponential equivalence method is not applicable, due to lac of existence result of LDP of the corresponding auxiliary problem in the appropriate solution space, instead we use the weak convergence procedure \cite{budhi}. Because of nonlinear perturbation ${\rm div}_{x}f(u)$ of $p$-Laplace operator $(p > 2)$, we are not able to use the results of monotone or locally monotone SPDEs (cf.~\cite{liurock,pard}) to establish LDP---which  differs from the paper \cite{liu}.  We settle this issue by employing semi-discrete time discretization together with a-priori estimation on some appropriate fractional Sobolev space . 

\vspace{0.2cm}
The study of  transportation cost inequality ~(TCI in short) is an important area of research as it has close connections with well-known functional inequalities, e.g., poincar{\'e} inequalities, logarithmic Sobolev inequalities.  The $L^1$-transportation cost inequality was first introduced in \cite{Tal2}. The connection between $L^1$-transportation cost inequality and normal concentration was studied in \cite{Tal, Marton-1996-1, Marton-1996-2, guilli}.  In  \cite{otto}, the authors studied relation between quadratic TCI and other functional inequalities; see also \cite{Ledoux-2001}.  For general theory of TCI, we refer the reader to \cite{villan}.
 In the recent past, many authors are devoted to study quadratic TCI for SPDEs  \cite{bouf, zhang,shang,sarantsev-2019}. Quadratic TCI for more general SPDEs was established in \cite{sarantsev-2019} under the $L^2$  distance and uniform distance in case of additive noise. In \cite{bouf}, the authors have shown the quadratic TCI for stochastic heat equations perturbed by multiplicative space-time white noise under the uniform norm. Very recently, in \cite{zhang}, the authors have obtained Quadratic TCI for the  kinetic solution of stochastic conservation laws driven by Brownian noise.
  
 \vspace{0.2cm}

The aim of this paper is twofold:
\begin{itemize}
 \item[i)] Firstly, we show the small perturbation LDP for the strong solution of \eqref{problem1.1} on the solution space $\mathcal{Z} = C([0,T];L^{2}(D))\cap L^{p}([0,T];W_{0}^{1,p}(D))$ via the weak convergence method. We first prove the well-posedness result of the corresponding skeleton equation~(cf.~\eqref{eq:skeleton}) based on the  technique of time discretization. We built a sequence of approximate solution and avail of {\em a-priori} estimates, compact embedding to prove existence of skeleton equation. Moreover, we use standard $L^1$-contraction approach to have uniqueness of skeleton equation. Furthermore, by using a-priori estimates for skeleton equation and the $v_\eps$, solution of the equation \eqref{eq:epsilon-main}, together with Girsanov and Skorokhod theorem, we derive LDP property of the strong solution of \eqref{problem1.1} on $\mathcal{Z}$.
\item[ii)] Secondly, we establish the quadratic TCI for the solution of \eqref{problem1.1} on $L^2(0,T; L^1(D))$ based on the method of Girsanov transformation and $L^1$-contraction principle. 
\end{itemize}
\vspace{0.2cm}

The rest of the paper is organized as follows. we discuss the assumption, basic definition and framework in Section \ref{sec:preliminaries}. In Section \ref{sec:sk}, we show well-posedness result of skeleton equation.  Section \ref{sec:LDP} is devoted to prove the LDP for the strong solution of \eqref{problem1.1} . In the final section \ref{sec:TCI}, we prove  quadratic TCI for the solution of \eqref{problem1.1}. 

\section{Preliminaries and Technical Framework} \label{sec:preliminaries}

Let $p'$ denotes the convex conjugate of $p$. Consider the Gelfand triple 
$$ W_{0}^{1,p}(D) \hookrightarrow L^{2}(D) \equiv (L^{2}(D))^{'} \hookrightarrow W^{-1,p'}(D)  $$ (see \cite[Section 7.2]{roubi}). 
We denote by $(\cdot, \cdot)_{L^2(D)}$ and 
 $\left< \cdot, \cdot  \right >_{W^{-1,p'}(D),W_{0}^{1,p}(D)}$, the usual scalar product on $L^2(D)$ and dualization between $W^{-1,p'}(D)$ and $W_{0}^{1,p}(D)$ respectively. For simplicity, we rewrite (in some place) these as  $(\cdot, \cdot)$ and   $\left< \cdot, \cdot  \right >$. Throughout this paper, we use the letters $C,\,L$ etc to denote various generic constants. The $p$-Laplacian operator, denoted by $\Delta_p,$ is defined  by
$$\Delta_{p}u := \nabla\cdot(|\nabla{u}|^{p-2} \nabla{u}), \quad 1 < p <\infty.$$

 To prove LDP property and quadratic TCI for the solution of \eqref{problem1.1}, we made the following assumptions:
  \begin{Assumptions}
\item \label{A1} The initial function $u_0\in L^2(D)$.
\item \label{A2} The flux function $\vec{f} : \mathbb{R} \rightarrow \mathbb{R}^d$ is a Lipschitz continuous with $\vec{f}(0) = \vec{0}.$
\item \label{A3} The diffusion coefficient $ H : \mathbb{R} \rightarrow \mathbb{R}$ is  a Lipschitz continuous function with $H(0) = 0.$
 \end{Assumptions}

\subsection{Large deviation principle}  Recall some basic definitions and results of large deviation theory. Let $\{X^\epsilon\}_{\epsilon > 0}$  be a sequence of random variable defined on a probability space $(\Omega, \mathcal{F}, \mathbb{P})$ taking values in Polish space $\mathcal{Z}$.
     \begin{defi}[Rate Function]
      A function I : $\mathcal{Z}$ $\rightarrow$ $[0,\infty]$ is called a rate function if I is lower semi-continuous. A rate function I is called a good rate function if the level set \{x $\in$  $\mathcal{Z}$ : I(x) $\leq$ M \} is compact for each M $<$ $\infty$.
\end{defi}
\begin{defi} [Large deviation principle]
The sequence  $\{X^\epsilon\}_{\epsilon > 0}$  is said to satisfy the large deviation principle with rate function I if for each Borel subset A of $\mathcal{Z}$ 
\begin{equation*}
    \begin{aligned}
       - \underset{x\in A^0} \inf I(x) \leq \underset{\epsilon \longrightarrow 0}{\liminf} \big(\epsilon^{2}\log\mathbb{P}(X^\epsilon \in A)\big) & \leq    \underset{\epsilon \longrightarrow 0}{\limsup} \big(\epsilon^{2} \log \mathbb{P}(X^\epsilon \in A)\big)
        \leq  - \underset{x\in \bar{A}} \inf I(x),
    \end{aligned}
\end{equation*}
where $A^0$ and $\bar{A}$ are interior and closure of $A$ in $\mathcal{Z}$ respectively. 
\end{defi}
Let $\left\{W(t) \right\}_{t \geq 0}$ be a real-valued Wiener process on a given complete probability space $\big(\Omega, \mathcal{F}, \mathbb{P}, \{ \mathcal{F}_t\}_{t \geq 0} \big)$.  Consider the following sets:
\begin{align}
      &\mathcal{A} := \Big\{ \phi :  \text{ $\phi$  is a  real-valued  $\{\mathcal{F}_t\}$-predictable process  such that } \notag\\  
      & \hspace{4cm} \int_0^T|\phi(s)|_{\mathbb{R}}^2ds <  \infty,  \, \mathbb{P}-a.s.\Big\}, \notag \\
      &S_M := \Big\{h \in L^2([0,T]; \mathbb{R}) : \int_0^T|h(s)|_{\mathbb{R}}^2\,ds\leq M\Big\}, \notag \\
    &\mathcal{A}_M :=\Big\{\phi \in \mathcal{A} : \phi(\omega) \in S_M,  \mathbb{P}-a.s.\Big\}. \notag
\end{align}
Suppose for each  $\epsilon > 0$, $\mathcal{G^{\epsilon}}$ : C([0,T], $\mathbb{R}$) $\rightarrow$ $\mathcal{Z}$ is a measurable map and $X^{\epsilon}$ := $\mathcal{G}^{\epsilon}(W)$. Then,  the sequence $X^\epsilon$ satisfies the LDP on $\mathcal{Z}$ if the following sufficient conditions hold: 
\vspace{0.1cm}

\noindent{\bf Condition~C}:  There exists a measurable function  $\mathcal{G}^{0} : C([0,T]; \mathbb{R}) \rightarrow \mathcal{Z}$ such that the following conditions hold.
\begin{Assumptions2}
\item \label{C1}  Let \{$h^\epsilon$; $\epsilon$ $>$ 0\} $\subset$ $\mathcal{A}_M$, for some $M < \infty$, be a family of process converges to $h$ in distribution as $S_{M}$-valued random elements.  Then  
\begin{align*}
    \mathcal{G}^{\epsilon} \bigg( W(\cdot) + \frac{1}{{\epsilon}} \int_0^{\cdot} h^{\epsilon}(s)\,ds\bigg) \rightarrow  \mathcal{G}^0 \bigg( \int_0^{\cdot}h(s) \, ds \bigg)
\end{align*}
in distribution as $\epsilon \rightarrow 0$.
\item \label{C2} For every M $<$ $\infty$, the set
\begin{align*} 
    K_{M} = \left \{\mathcal{G}^0\bigg(\int_0^{.}h(s)\,ds \bigg) : h \in S_{M} \  \right \}
\end{align*}
is a compact subset of $\mathcal{Z}$.
\end{Assumptions2}

The following lemma is due to Budhiraja and Dupuis in \cite[Theorem 4.4]{budhi}.
\begin{lem} \label{lem:main}
 If $X^{\epsilon}$ := $\mathcal{G}^{\epsilon}(W)$ satisfies {\bf  Condition C}, then $X^\epsilon$ satisfies the LDP on $\mathcal{Z}$ with the good rate function $I$, defined by
 \begin{align} \label{eq:Imain}
  I(f)= \begin{cases}
\displaystyle  \underset{\big\{h \in L^2([0,T]; \mathbb{R}) : f =  \mathcal{G}^{0} \big(\int_0^{\cdot}h(s)ds\big)\big\}}\inf  \bigg \{ \frac{1}{2} \int_0^T|h(s)|_{\mathbb{R}}^2ds \bigg \}, \quad \forall f \in \mathcal{Z}\,,
 \\
\displaystyle   \infty, \quad if \quad \bigg \{h \in L^2([0,T]; \mathbb{R}) : f = \mathcal{G}^0 \bigg( \int_0^{.}h(s)ds \bigg) \bigg \} = \emptyset.
\end{cases}
 \end{align}
\end{lem}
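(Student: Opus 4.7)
The plan is to reduce the LDP to the equivalent Laplace principle on the Polish space $\mathcal{Z}$ and then exploit the Boué--Dupuis variational representation for exponentials of Brownian functionals. First I would recall that for a sequence of $\mathcal{Z}$-valued random variables, the LDP with good rate function $I$ is equivalent (Varadhan/Bryc) to the Laplace principle: for every bounded continuous $F:\mathcal{Z}\to\mathbb{R}$,
\begin{equation*}
\lim_{\epsilon\to 0}\, -\epsilon^{2}\log \mathbb{E}\Big[\exp\big(-\epsilon^{-2}F(X^{\epsilon})\big)\Big] \;=\; \inf_{f\in\mathcal{Z}}\bigl(F(f)+I(f)\bigr).
\end{equation*}
Combined with the variational formula $-\epsilon^{2}\log \mathbb{E}[\exp(-\epsilon^{-2}F(X^{\epsilon}))]=\inf_{\phi\in\mathcal{A}}\mathbb{E}\bigl[\tfrac{1}{2}\int_{0}^{T}|\phi(s)|^{2}ds+F(\mathcal{G}^{\epsilon}(W+\tfrac{1}{\epsilon}\int_{0}^{\cdot}\phi\, ds))\bigr]$ (after rescaling $\phi=h/\epsilon$), the problem is cleanly reduced to an analysis of pre-limit controlled variational problems.

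For the Laplace principle \emph{upper bound}, I would fix $F$ bounded continuous and pick $\delta$-almost minimizers $h^{\epsilon}\in\mathcal{A}$ in the variational representation. A standard truncation argument, using the boundedness of $F$, shows that we may assume $h^{\epsilon}\in\mathcal{A}_{M}$ for some $M$ depending only on $\|F\|_{\infty}$. Then $\{h^{\epsilon}\}$ is tight as $S_{M}$-valued random elements (weak topology), so along a subsequence $h^{\epsilon}\Rightarrow h$. Invoking hypothesis~\ref{C1} we get $\mathcal{G}^{\epsilon}(W+\tfrac{1}{\epsilon}\int_{0}^{\cdot}h^{\epsilon}\,ds)\Rightarrow \mathcal{G}^{0}(\int_{0}^{\cdot}h\,ds)$ in $\mathcal{Z}$, and continuity of $F$ together with lower semicontinuity of $h\mapsto \tfrac{1}{2}\int_{0}^{T}|h|^{2}ds$ and Fatou yield
\begin{equation*}
\liminf_{\epsilon\to 0}\, -\epsilon^{2}\log \mathbb{E}[e^{-F(X^{\epsilon})/\epsilon^{2}}]\;\geq\; \inf_{f\in\mathcal{Z}}\bigl(F(f)+I(f)\bigr).
\end{equation*}

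For the Laplace principle \emph{lower bound}, given $\delta>0$ I would fix a near-minimizer $f_{\delta}\in\mathcal{Z}$ of $F+I$ and a deterministic $h_{\delta}\in L^{2}([0,T];\mathbb{R})$ with $f_{\delta}=\mathcal{G}^{0}(\int_{0}^{\cdot}h_{\delta}\,ds)$ and $\tfrac{1}{2}\int_{0}^{T}|h_{\delta}|^{2}ds\leq I(f_{\delta})+\delta$. Using $h_{\delta}$ as an admissible control in the variational representation (a single deterministic choice gives an upper bound on the infimum, hence a \emph{lower} bound on $-\epsilon^{2}\log \mathbb{E}[\cdots]$) and applying~\ref{C1} with the constant sequence $h^{\epsilon}\equiv h_{\delta}$ produces
\begin{equation*}
\limsup_{\epsilon\to 0}\,-\epsilon^{2}\log\mathbb{E}[e^{-F(X^{\epsilon})/\epsilon^{2}}] \;\leq\; F(f_{\delta})+I(f_{\delta})+\delta,
\end{equation*}
and sending $\delta\to 0$ closes the bound.

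Finally, to verify that $I$ is a \emph{good} rate function I would observe that the map $h\mapsto \mathcal{G}^{0}(\int_{0}^{\cdot}h\,ds)$ restricted to $S_{M}$ (weakly compact) has image $K_{M}$, which is compact in $\mathcal{Z}$ by~\ref{C2}; since $\{I\leq M\}\subseteq K_{\sqrt{2M}}$, the level sets are relatively compact. Lower semicontinuity of $I$ follows from the same scheme: if $f_{n}\to f$ in $\mathcal{Z}$ with $I(f_{n})\leq \alpha$, extract $h_{n}$ with $\tfrac{1}{2}\int|h_{n}|^{2}\leq \alpha+1/n$ and pass to a weak limit $h$; \ref{C1}/\ref{C2} identify $\mathcal{G}^{0}(\int h\,ds)=f$, giving $I(f)\leq \alpha$. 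I expect the main technical obstacle to be the lower bound, specifically ensuring that the application of \ref{C1} to the non-random control $h_{\delta}$ passes through the exponential moment estimates needed to justify exchanging limits in the Girsanov-type expression underlying the variational formula; this typically requires an $L^{2}$-uniform integrability argument on $\tfrac{1}{2}\int|h^{\epsilon}|^{2}ds+F(\mathcal{G}^{\epsilon}(\cdot))$ along near-minimizing sequences.
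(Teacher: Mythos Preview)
Your sketch is essentially the standard Budhiraja--Dupuis argument and is correct in outline. However, the paper does not give its own proof of this lemma: it is stated with the attribution ``due to Budhiraja and Dupuis in \cite[Theorem~4.4]{budhi}'' and is then used as a black box. So there is nothing to compare against; you have supplied a proof where the paper simply cites the literature.

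One small slip: in the lower-bound paragraph you write that plugging in the deterministic control $h_{\delta}$ ``gives an upper bound on the infimum, hence a \emph{lower} bound on $-\epsilon^{2}\log\mathbb{E}[\cdots]$''. Since the variational formula is an equality $-\epsilon^{2}\log\mathbb{E}[\cdots]=\inf_{\phi}\mathbb{E}[\cdots]$, a specific control gives an \emph{upper} bound on $-\epsilon^{2}\log\mathbb{E}[\cdots]$; your displayed inequality $\limsup\leq\cdots$ is nonetheless the correct conclusion, so this is just a wording error. Also, in the goodness argument, note that $\{I\le M\}\subseteq K_{2M}$ rather than $K_{\sqrt{2M}}$, since $S_{N}$ is defined by $\int_{0}^{T}|h|^{2}\le N$ (not $\|h\|_{L^{2}}\le N$). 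Neither point affects the validity of your approach.
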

  
 For $\epsilon>0$,  consider the equation driven by small multiplicative noise:
 \begin{equation}
 \label{eq:ldp}
      \begin{aligned}
            du^{\epsilon}  - {\rm div}_x ( |\nabla u^{\epsilon}|^{p-2}\nabla u^{\epsilon} + \vec{f}(u^{\epsilon})) \, dt &= \epsilon\, H(u^{\epsilon})\, dW \quad  &in& \ \ \Omega \times D_{(0,T)}, \\
            u^{\epsilon}(0,.) &= u_0(.) \in L^2(D) &in& \ \ \Omega \times D\,.
        \end{aligned}
\end{equation}

Under the assumptions \ref{A1}-\ref{A3}, equation \eqref{eq:ldp}  has a unique strong solution \\
 $u^{\epsilon}\in L^2(\Omega; C([0,T];W^{-1,p'}(D))\cap L^{p}([0,T];W_{0}^{1,p}(D)))$; cf.~ \cite[Theorem 2]{gv1}. Moreover, following the arguments as in \cite[Lemma $4.1$]{Pardoux1974-75}, one can easily see that the path-wise unique solution $u^{\eps}$ of  \eqref{eq:ldp}  has improved regularity property namely $u^{\eps}\in L^2(\Omega;  C([0,T];L^{2}(D))\cap L^{p}([0,T];W_{0}^{1,p}(D)))$. Thus, there exists a Borel-measurable function $\mathcal{G^{\epsilon}} : C([0,T];\mathbb{R}) \rightarrow \mathcal{Z}: = C([0,T];L^{2}(D))\cap L^{p}([0,T];W_{0}^{1,p}(D))$ such that $u^{\epsilon}$ := $\mathcal{G}^{\epsilon}(W)$ a.s.
\vspace{0.1cm}

 For $h \in L^2([0,T];\mathbb{R})$, we consider the following  skeleton equation 
\begin{equation}
\label{eq:skeleton}
   \begin{aligned}
            du_{h}  - {\rm div}_x ( |\nabla u_{h}|^{p-2}\nabla u_{h} +\vec{f}(u_{h})) \, dt &= H(u_{h})\,h(t)\,dt, \\
            u_{h}(0,.) &= u_0(.). 
        \end{aligned}
\end{equation}
In view of the assumptions \ref{A1}-\ref{A3}, equation  \eqref{eq:skeleton} exhibits unique solution $u_h \in C([0,T];L^{2}(D))\cap L^{p}([0,T];W_{0}^{1,p}(D))$ for any given  $h \in L^2([0,T];\mathbb{R})$;  see  Section
\ref{sec:sk}. Define $\mathcal{G}^{0} : C([0,T],\mathbb{R}) \rightarrow  \mathcal{Z}$ by
\begin{equation}
    \mathcal{G}^{0}(\phi) := \begin{cases}
\displaystyle   u_{h} \quad  \text{if $\phi =\int_{0}^{\cdot} h(s) \, ds$  for some $ h \in L^2([0,T];\mathbb{R})$},\\
     0 \quad \text{otherwise}.
    \end{cases}
\end{equation}
Now we state the main result of our paper.
 \begin{thm} \label{thm:main}
  Let $p > 2$ and the assumptions \ref{A1} - \ref{A3} hold true. For any $\epsilon > 0,~u^{\epsilon}$ satisfies the LDP on $\mathcal{Z} = C([0,T];L^{2}(D))\cap L^{p}([0,T];W_{0}^{1,p}(D))$ with the good rate function $I$ given by \eqref{eq:Imain}, where $u^{\epsilon}$ is the unique solution of \eqref{eq:ldp}.
\end{thm}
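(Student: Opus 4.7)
The plan is to invoke Lemma \ref{lem:main} (Budhiraja--Dupuis) by verifying Condition~C, namely \ref{C1} and \ref{C2}, for the pair $(\mathcal{G}^{\epsilon},\mathcal{G}^{0})$ already introduced. The well-posedness of the skeleton equation \eqref{eq:skeleton} established in Section \ref{sec:sk} makes $\mathcal{G}^{0}$ well-defined and furnishes the a-priori estimates that will be recycled in both steps. Throughout, the strategy is to combine monotonicity of the $p$-Laplacian with Aubin--Lions type compactness in an appropriate fractional Sobolev space in time, which is exactly the ingredient advertised in the introduction as the workaround for the inapplicability of the locally monotone SPDE framework.

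For condition \ref{C2}, I would take an arbitrary sequence $\{h_{n}\}\subset S_{M}$. Since $S_{M}$ is bounded in $L^{2}([0,T];\mathbb{R})$ and convex, it is weakly sequentially compact, so up to a subsequence $h_{n}\rightharpoonup h$ with $h\in S_{M}$. The energy identity for \eqref{eq:skeleton} (testing by $u_{h_{n}}$, using \ref{A2}, \ref{A3}, Young's inequality and Gronwall) yields a bound on $u_{h_{n}}$ in $L^{\infty}([0,T];L^{2}(D))\cap L^{p}([0,T];W_{0}^{1,p}(D))$ that is uniform in $n$, from which the equation itself produces a uniform bound for $\partial_{t}u_{h_{n}}$ in a fractional Sobolev space in time. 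An Aubin--Lions type argument then extracts a subsequence converging strongly in $L^{p}([0,T];L^{2}(D))$ and in $C([0,T];L^{2}(D))$; Minty's monotonicity trick identifies the weak limit of $|\nabla u_{h_{n}}|^{p-2}\nabla u_{h_{n}}$, Lipschitz continuity of $\vec{f}$ handles the flux term, and the weak convergence $h_{n}\rightharpoonup h$ together with strong convergence of $H(u_{h_{n}})$ in $L^{2}$ passes the control term to the limit. Uniqueness of the skeleton solution forces the limit to be $u_{h}$, so every sequence in $K_{M}$ admits a subsequence converging in $\mathcal{Z}$ to a point of $K_{M}$.

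For condition \ref{C1}, let $\{h^{\epsilon}\}\subset\mathcal{A}_{M}$ converge in distribution to $h$ as $S_{M}$-valued random elements. Girsanov's theorem identifies
\[
v^{\epsilon}:=\mathcal{G}^{\epsilon}\Big(W(\cdot)+\tfrac{1}{\epsilon}\int_{0}^{\cdot}h^{\epsilon}(s)\,ds\Big)
\]
as the unique strong solution of the controlled equation
\[
dv^{\epsilon}-{\rm div}_{x}\bigl(|\nabla v^{\epsilon}|^{p-2}\nabla v^{\epsilon}+\vec{f}(v^{\epsilon})\bigr)\,dt
=H(v^{\epsilon})\,h^{\epsilon}(t)\,dt+\epsilon H(v^{\epsilon})\,dW.
\]
An It\^o expansion of $\|v^{\epsilon}\|_{L^{2}(D)}^{2}$, combined with $\int_{0}^{T}|h^{\epsilon}|^{2}\,ds\leq M$ a.s. and the BDG inequality, yields uniform-in-$\epsilon$ moment bounds for $v^{\epsilon}$ in $L^{2}(\Omega;L^{\infty}([0,T];L^{2}(D))\cap L^{p}([0,T];W_{0}^{1,p}(D)))$, along with fractional time regularity sufficient for tightness in $\mathcal{Z}$ of the laws of $(v^{\epsilon},h^{\epsilon},W)$. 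Skorokhod's representation theorem then delivers almost surely convergent copies $(\tilde v^{\epsilon},\tilde h^{\epsilon},\tilde W)\to(\tilde v,\tilde h,\tilde W)$ on a new probability space with $\tilde h\stackrel{d}{=}h$. Passing to the limit in the controlled equation term by term, the stochastic integral vanishes by the factor $\epsilon$ and the uniform $H(v^{\epsilon})$ bound, the flux and diffusion terms converge by Lipschitz continuity, the control term converges using strong convergence of $H(\tilde v^{\epsilon})$ in $L^{2}$ and weak convergence $\tilde h^{\epsilon}\rightharpoonup\tilde h$, and the $p$-Laplace term is identified via Minty's monotonicity argument. The almost-sure limit thus satisfies \eqref{eq:skeleton} driven by $\tilde h$, so $\tilde v=\mathcal{G}^{0}(\int_{0}^{\cdot}\tilde h\,ds)$ by uniqueness, and \ref{C1} follows.

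The main obstacle is the simultaneous presence of the $p$-Laplace principal part and the nonlinear flux ${\rm div}_{x}\vec{f}(u)$ under only Lipschitz regularity of $\vec{f}$: to apply Minty's trick one must first upgrade the weak-$L^{p}(W_{0}^{1,p})$ limit to strong convergence of $v^{\epsilon}$ in $C([0,T];L^{2}(D))$, which is precisely why the fractional-in-time estimate of $\partial_{t}v^{\epsilon}$ is indispensable. This fractional Sobolev bound, together with the semi-discrete construction of the skeleton solution in Section \ref{sec:sk}, is the technical heart of the proof and what makes the locally monotone SPDE framework of \cite{liurock,liu} insufficient in the present setting.
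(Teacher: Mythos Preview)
Your overall architecture is the same as the paper's: verify \ref{C1} and \ref{C2} via a-priori energy bounds, fractional time regularity, Aubin--Lions/Skorokhod compactness, and a Minty-type identification of the $p$-Laplace limit. However, there is one concrete technical point where your sketch overshoots what compactness actually delivers, and the paper's proof has to work harder at precisely that spot.

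You write that an Aubin--Lions argument ``extracts a subsequence converging strongly in $L^{p}([0,T];L^{2}(D))$ and in $C([0,T];L^{2}(D))$'' and, for \ref{C1}, that the laws are tight ``in $\mathcal{Z}$''. Neither is available from the estimates at hand. The uniform bounds are only $L^{\infty}_{t}L^{2}_{x}\cap L^{p}_{t}W_{0}^{1,p}$ together with $W^{\alpha,p}_{t}W^{-1,p'}_{x}$ for small $\alpha$; from these Lemma \ref{lem:cpt} (Flandoli--G\k{a}tarek) yields strong convergence only in $L^{2}([0,T];L^{2}(D))$, and Arzel\`a--Ascoli gives $C([0,T];W^{-1,p'}(D))$. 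This is enough to run Minty and identify the limit as the skeleton solution, but it does \emph{not} yet give convergence in $\mathcal{Z}=C([0,T];L^{2}(D))\cap L^{p}([0,T];W_{0}^{1,p}(D))$. The paper closes this gap by a separate energy estimate on the \emph{difference} $u_{n}-u_{h}$ (resp.\ $\tilde v^{\epsilon}-\tilde v$): the strong monotonicity inequality $2^{2-p}|a-b|^{p}\le(|a|^{p-2}a-|b|^{p-2}b)\cdot(a-b)$ produces the $L^{p}(W_{0}^{1,p})$ norm of the difference on the left, while the right-hand side is controlled by the already-established $L^{2}_{t}L^{2}_{x}$ convergence and the weak convergence of $h_{n}$ (resp.\ $\tilde h^{\epsilon}$). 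This step---Theorem \ref{thm:cgs1} and its stochastic analogue---is what actually upgrades compactness-level convergence to $\mathcal{Z}$-convergence, and your outline should make it explicit rather than fold it into the compactness claim.
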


\begin{rem}
We point out that  the result of Theorem \ref{thm:main} is still valid if we omit the condition $H(0)=0$. i.e., $|H(u(t,x)| \leq C(1+|u(t,x)|) \ \forall  \, (t,x) \in D_{(0,T)}. $
\end{rem}
\vspace{0.1cm}

\subsection{Transportation cost inequality} Let $(X,d)$ be a metric space equipped with the Borel $\sigma$-field $\mathcal{B}$ and $\mathcal{P}(X)$ denotes the set of all probability measure on $(X,d)$.
 \begin{defi} [Wasserstein distance] Let $\mu, \nu \in \mathcal{P}(X)$ and $r \in [1,\infty)$ be given.  We define the $L^r$-Wasserstein distance between  $\mu$ and $\nu$, denoted by $W_r(\nu, \mu) $,  as  (cf.~ \cite{villan} )
 \begin{equation}\label{wd}
 W_r(\nu, \mu) := \bigg [inf \int_{X \times X} d(x,y)^r\pi(dx,dy) \bigg ]^{\frac{1}{r}} \\
 = inf \bigg \{\big[\mathbb{E}(d(\mathcal{X},\mathcal{Y})^r)\big]^{\frac{1}{r}},~~~ law(\mathcal{X}) = \mu, ~~ law(\mathcal{Y}) = \nu \bigg \},
  \end{equation}
 where the infimum is  taken over all the joint probability measure $\pi$ on X $\times$ X  with marginals $\mu$ and $\nu$.
 \end{defi}
 We say that $\nu$ is absolutely continuous with respect to $\mu$, denoted by $\nu  \ll  \mu$, if $\nu(A)=0$ for any  $A\in \mathcal{B}$ such that $\mu(A)=0$. The relative entropy (Kullback information) of $\nu$ with respect to $\mu$ is defined by 
 \begin{equation*}
     \mathcal{H}(\nu | \mu) = \begin{cases}
\displaystyle      \int_X log(\frac{d\nu}{d\mu})d\nu \quad  if \, \nu  \ll  \mu, \\ 
       +\infty \quad \quad \text{otherwise}. 
     \end{cases}
 \end{equation*}
 
\begin{defi}[Transportation cost inequality] \label{5.1.2} 
A measure $\mu$ satisfies the $L^r$-transportation cost inequality if there exist a constant C $>$ 0 such that for all probability measure $\nu$,
\begin{equation} \label{eq:reltci}
W_r(\nu, \mu) \leq \sqrt{2C\mathcal{H}(\nu|\mu)}.
\end{equation}
We denote the relation in \eqref{eq:reltci} as $\mu \in T^{r}(C)$. The case r = 2, $ T^{2}(C)$ is referred to as the quadratic transport cost inequality. 
\end{defi} 
 In view of H\"{o}lder's inequality, one has 
$$ T^{q}(C)\subseteq T^{r}(C)\quad \text{for $1\le r\le q$}.$$
Hence the property  $ T^{2}(C)$ is stronger than  $ T^{1}(C)$.  We now ready to state the main result regarding TCI.
\begin{thm} \label{thm:maintci}
Let the assumptions \ref{A1}-\ref{A3} hold and in addition  the diffusion coefficient $H$ be bounded i.e., there exists a constant $C>0$ such that $\left |H(u(t,x)) \right | \leq C$. Then the law $\mu$ of the solution of \eqref{problem1.1} satisfies the quadratic transportation cost inequality on $X:= L^2(0,T; L^1(D))$  i.e. $\mu \in T^{2}(C)$.

\end{thm}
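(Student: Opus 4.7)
The plan is to construct an explicit coupling between $\mu$ and an arbitrary $\nu\ll\mu$ on $X=L^{2}(0,T;L^{1}(D))$ via Girsanov transformation, and then control the $L^{1}$-distance between the two couplings by exploiting the $L^{1}$-contraction principle available for the evolutionary $p$-Laplace equation. One may assume $\mathcal{H}(\nu\mid\mu)<\infty$. Since the law of $u$ (the unique strong solution of \eqref{problem1.1}) generates the filtration $\{\mathcal{F}_{t}\}$ up to $\mathbb{P}$-null sets, Girsanov/martingale representation applied to the density $\frac{d\mathbb{Q}}{d\mathbb{P}}=\frac{d\nu}{d\mu}(u)$ yields an $\{\mathcal{F}_{t}\}$-predictable process $h$ such that
\[
\frac{d\mathbb{Q}}{d\mathbb{P}}=\exp\Bigl(\int_{0}^{T}h(s)\,dW(s)-\tfrac{1}{2}\int_{0}^{T}|h(s)|^{2}\,ds\Bigr),\qquad \mathcal{H}(\nu\mid\mu)=\tfrac{1}{2}\mathbb{E}^{\mathbb{Q}}\!\int_{0}^{T}|h(s)|^{2}\,ds.
\]
Under $\mathbb{Q}$, the process $\widetilde{W}(t):=W(t)-\int_{0}^{t}h(s)\,ds$ is a Brownian motion, and $u$ has law $\nu$.

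Next, on the same probability space, I would introduce $v$ as the unique strong solution of \eqref{problem1.1} driven by $\widetilde{W}$, with the same initial datum $u_{0}$; by well-posedness of \eqref{problem1.1} (cf.~\cite{gv1}) the $\mathbb{Q}$-law of $v$ is exactly $\mu$. Then $(v,u)$ under $\mathbb{Q}$ is a coupling with marginals $(\mu,\nu)$, so that by definition
\[
W_{2}(\nu,\mu)^{2}\le\mathbb{E}^{\mathbb{Q}}\bigl[\|u-v\|_{L^{2}(0,T;L^{1}(D))}^{2}\bigr].
\]
Rewriting the equations under $\mathbb{Q}$, the difference $w=u-v$ satisfies
\[
dw-\mathrm{div}_{x}\bigl(|\nabla u|^{p-2}\nabla u-|\nabla v|^{p-2}\nabla v+\vec f(u)-\vec f(v)\bigr)\,dt=H(u)h(t)\,dt+(H(u)-H(v))\,d\widetilde{W}.
\]

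The core step is an $L^{1}$-contraction estimate in the spirit of Kru\v{z}kov combined with It\^o's formula applied to a smooth convex approximation $\eta_{\delta}$ of $|\cdot|$. Testing against $\eta_{\delta}'(w)$: the $p$-Laplace contribution $-\int \eta_{\delta}''(w)\bigl(|\nabla u|^{p-2}\nabla u-|\nabla v|^{p-2}\nabla v\bigr)\cdot\nabla w\,dx\le 0$ by strict monotonicity; the flux contribution, after integration by parts, is controlled via the Lipschitz bound on $\vec f$ and vanishes in the limit $\delta\to 0$ by the Kru\v{z}kov device (the contribution concentrates on $\{|w|\le\delta\}$); the It\^o correction $\tfrac{1}{2}\int\eta_{\delta}''(w)(H(u)-H(v))^{2}\,dx\le L_{H}^{2}\int\eta_{\delta}''(w)w^{2}\,dx\to 0$ by the Lipschitz property of $H$; and the martingale term with respect to $\widetilde{W}$ is mean-zero (after a localisation argument). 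Passing $\delta\to 0$ and using boundedness $|H|\le C$ leads to
\[
\mathbb{E}^{\mathbb{Q}}\|w(t)\|_{L^{1}(D)}\le C|D|\,\mathbb{E}^{\mathbb{Q}}\!\int_{0}^{t}|h(s)|\,ds,\qquad t\in[0,T].
\]
By Cauchy--Schwarz this yields $\mathbb{E}^{\mathbb{Q}}\|w(t)\|_{L^{1}(D)}^{2}\le C^{2}|D|^{2}\,t\,\mathbb{E}^{\mathbb{Q}}\!\int_{0}^{t}|h(s)|^{2}\,ds$, whence after integration in $t$,
\[
W_{2}(\nu,\mu)^{2}\le\mathbb{E}^{\mathbb{Q}}\!\int_{0}^{T}\!\|w(t)\|_{L^{1}(D)}^{2}\,dt\le C'T^{2}\,\mathbb{E}^{\mathbb{Q}}\!\int_{0}^{T}|h(s)|^{2}\,ds=2C'T^{2}\,\mathcal{H}(\nu\mid\mu),
\]
which proves $\mu\in T^{2}(C'T^{2})$.

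The main obstacle I anticipate is not the Girsanov/coupling step, which is essentially formal, but the rigorous execution of the $L^{1}$-contraction under the Girsanov-perturbed dynamics: one has to justify the It\^o--Kru\v{z}kov calculation for the nonlinear $p$-Laplace/flux pair, control the added drift $H(u)h$ uniformly in $h\in L^{2}(0,T;\mathbb{R})$, and handle integrability issues of the $\widetilde{W}$-stochastic integral via a stopping-time localisation combined with Burkholder--Davis--Gundy and the bound $|H|\le C$, so that mean-zero of the martingale survives passage to the limit $\delta\to 0$ and taking $\mathbb{E}^{\mathbb{Q}}$. The boundedness hypothesis on $H$ is precisely what ensures the bound $\mathbb{E}^{\mathbb{Q}}\|w(t)\|_{L^{1}}\lesssim \mathbb{E}^{\mathbb{Q}}\!\int_{0}^{t}|h(s)|ds$ is independent of the solution and closes the estimate without a Gronwall blow-up.
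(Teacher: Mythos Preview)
Your overall strategy---Girsanov coupling plus an $L^{1}$-contraction estimate via a Kru\v{z}kov-type approximation of $|\cdot|$---is exactly the one the paper uses. However, there is a genuine gap in your argument at the step where you pass from the first-moment bound
\[
\mathbb{E}^{\mathbb{Q}}\|w(t)\|_{L^{1}(D)}\le C|D|\,\mathbb{E}^{\mathbb{Q}}\!\int_{0}^{t}|h(s)|\,ds
\]
to the claimed second-moment bound $\mathbb{E}^{\mathbb{Q}}\|w(t)\|_{L^{1}(D)}^{2}\le C^{2}|D|^{2}\,t\,\mathbb{E}^{\mathbb{Q}}\!\int_{0}^{t}|h(s)|^{2}\,ds$. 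Cauchy--Schwarz applied to the right-hand side only gives you a bound on $\bigl(\mathbb{E}^{\mathbb{Q}}\|w(t)\|_{L^{1}}\bigr)^{2}$, and by Jensen's inequality this is \emph{smaller} than $\mathbb{E}^{\mathbb{Q}}\|w(t)\|_{L^{1}}^{2}$, not larger. A bound on $\mathbb{E}[X]$ never implies a bound on $\mathbb{E}[X^{2}]$. Since the Wasserstein-$2$ distance requires control of $\mathbb{E}^{\mathbb{Q}}\!\int_{0}^{T}\|w(t)\|_{L^{1}}^{2}\,dt$, your estimate does not close.

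The paper repairs this by \emph{squaring the pathwise It\^{o} inequality before taking expectation}. After squaring, the stochastic integral no longer has mean zero; instead, via It\^{o} isometry it contributes
\[
\mathbb{E}^{\mathbb{Q}}\!\int_{0}^{t}\Bigl(\int_{D}\zeta_{\vartheta}'(w)\bigl(H(u)-H(v)\bigr)\,dx\Bigr)^{2}ds
\;\le\; C\,\mathbb{E}^{\mathbb{Q}}\!\int_{0}^{t}\|w(s)\|_{L^{1}(D)}^{2}\,ds,
\]
using the Lipschitz property of $H$. Together with the squared drift term (where boundedness of $H$ gives $C\,\mathbb{E}^{\mathbb{Q}}\!\int_{0}^{t}|h(s)|^{2}\,ds$), the squared flux term and the squared It\^{o} correction (both vanishing as $\vartheta\to 0$), this produces an inequality of the form
\[
\mathbb{E}^{\mathbb{Q}}\|w(t)\|_{L^{1}}^{2}\le C\,\mathbb{E}^{\mathbb{Q}}\!\int_{0}^{t}|h(s)|^{2}\,ds + C\!\int_{0}^{t}\mathbb{E}^{\mathbb{Q}}\|w(s)\|_{L^{1}}^{2}\,ds,
\]
which is then closed by Gr\"onwall's lemma. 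So contrary to your final remark, Gr\"onwall is \emph{not} avoided; it is essential precisely because the martingale contribution survives (with nonzero variance) once you work at the level of second moments.
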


\section{Existence and uniqueness of Skeleton Equation} \label{sec:sk}

In this section, we will prove the existence and uniqueness of solution of the skeleton equation \eqref{eq:skeleton}.  To do so, we first construct an
 approximate solution via semi-implicit time discretization scheme, and then derive necessary uniform bounds.

\subsection{Time discretization and a-priori estimate}
 First, we introduce the semi-implicit time discretization. For $N \in \mathbb{N}$, let $0 = t_0 < t_1 < ... < t_N = T$ be a uniform partition of $[0,T]$ with $\tau := \frac{T}{N} = t_{k+1}-t_k$ for all $k=0,1,...,N-1$. For any $u_0 \in L^2(D)$, denote $\bar{u}_{0} = u_{0, \tau}$, where $ u_{0, \tau} \in W_0^{1,p}(D)$ is the unique solution to the problem $u_{0, \tau} - \tau \Delta_{p}u_{0, \tau} = u_0$. Then one has,  (see \cite[Lemma 30]{gv1})
\begin{align} \label{eq:1.1}
\begin{cases}
     u_{0, \tau} \rightarrow u_0 \ in \ L^2(D) \ \ as \ \tau \rightarrow 0, \\
     \frac{1}{2} \left \|u_{0, \tau} \right \|_{L^2(D)}^{2} + \tau \left \| \nabla u_{0, \tau}  \right \|_{L^p(D)}^{p} \leq  \frac{1}{2} \left \|u_{0} \right \|_{L^2(D)}^{2}.
\end{cases}
\end{align}
Let us define a projection  $\Pi_{\tau}$ in time as follows:  for any $h \in  L^2([0,T]; \mathbb{R})$, we define the projection $\Pi_{\tau} h \in \mathcal{P}_{\tau}$ by 
          $$\int_{0}^{T} \left ( \Pi_{\tau} h-h, \psi_{\tau}  \right ) \, dt = 0 \quad \forall \, \psi_{\tau} \in \mathcal{P}_{\tau}\,,$$
          where $\mathcal{P}_{\tau} := \left \{ \psi_{\tau}: (0,T) \rightarrow \mathbf{R} : \psi_{\tau} \mid_{(t_j,t_{j+1}]} \text{is a constant in} \, \mathbb{R}  \right \}.$
          Then it satisfies the following:
          \begin{align} \label{est:SI1}
              \begin{cases}
                  \Pi_{\tau} h \rightarrow h \quad in \, L^2([0,T]; \mathbb{R}),\\
                  \left \|\Pi_{\tau} h \right \|_{L^2([0,T]; \mathbb{R})} \leq \left \| h \right \|_{L^2([0,T]; \mathbb{R})}, 
               \displaystyle 
              \end{cases}
          \end{align}
          For more details refer to see \cite[Lemma 4.4]{Xin}. We  set 
          \begin{align}
          h_{k+1} = \Pi_{\tau} h(t_{k+1}) \quad k=0,1,...,N-1. \label{defi: discrete-h}
          \end{align}
       Then, by \eqref{est:SI1}, we have 
\begin{align}
\tau \sum_{k=0}^{N-1} |h_{k+1}|^2 \le \int_0^T |\Pi_\tau h|^2\,dt \le \int_0^T h^2(t)\,dt \le C\,. \label{esti:proj-time}
\end{align}
Consider a semi-implicit Euler Maruyama scheme of \eqref{eq:skeleton}:  for $k=0,1,...,N-1$,
          \begin{equation}\label{SI1}
              \bar{u}_{k+1} - \bar{u}_{k} - \tau {\rm div}( |\nabla {\bar{u}}_{k+1}|^{p-2}\nabla {\bar{u}}_{k+1} +\vec{f}(\bar{u}_{k+1})) = \tau H(\bar{u}_{k})h_{k+1}\,,
          \end{equation}
          where $h_{k+1}$ is given in \eqref{defi: discrete-h}.  The following proposition ensures the existence of solution of  the semi-implicit scheme \eqref{SI1} 
\begin{prop} \label{extdis}
For any given $\bar{u}_{k} \in L^2(D)$ and any $k = 0,1,2,...,N-1$, there exists a unique $\bar{u}_{k+1} \in W_{0}^{1,p}(D) $ such that
 \[\bar{u}_{k+1} - \bar{u}_{k} - \tau {\rm div}( |\nabla \bar{u}_{k+1}|^{p-2}\nabla \bar{u}_{k+1} + \vec{f}(\bar{u}_{k+1})) = \tau H(\bar{u}_{k})h_{k+1} \] in $L^2(D).$
 \begin{proof}
 For a fixed $\tau > 0$, we define an operator $\mathcal{T} : W_{0}^{1,p}(D) \rightarrow W^{-1,p'}(D)$ by
 $$ \left \langle \mathcal{T}(u), v  \right \rangle_{W^{-1,p'}(D),W_{0}^{1,p}(D)} := \int_{D} \left (u v + \tau( |\nabla u|^{p-2}\nabla u +\vec{f}(u)) \cdot \nabla{v}  \right ) \, dx, \quad \forall \, u,v \in  W_{0}^{1,p}(D). $$
 Then, $\mathcal{T}$ is coercive and pseudo-monotone operator. By employing  Brezis' Theorem \cite [Theorem 2.6]{roubi}, one can easily conclude that $\mathcal{T}$ is onto. Moreover, using the similar arguments as in \cite[Lemma 1]{gv1}, we deduce that $\mathcal{T}$ is injective and $\mathcal{T}^{-1}$ is continuous.
 \vspace{0.1cm}
 
\noindent  Observe that, $\bar{u}_{k} + \tau H(\bar{u}_{k})h_{k+1} \in L^2(D)$. Hence, there exists unique $\bar{u}_{k+1} \in W_{0}^{1,p}(D)$ such that
 \begin{align} \label{S1}
  & \bar{u}_{k+1} = \mathcal{T}^{-1}(\bar{u}_{k} + \tau H(\bar{u}_{k})h_{k+1}) \quad \forall \,k = 0,1,...,N-1.  \notag \\
    & i.e., \ \bar{u}_{k+1}  - \bar{u}_{k} - \tau {\rm div}( |\nabla \bar{u}_{k+1}|^{p-2}\nabla \bar{u}_{k+1} + \vec{f}(\bar{u}_{k+1})) = \tau H(\bar{u}_{k})h_{k+1} \quad \forall \,k = 0,1,...,N-1. \notag
 \end{align}
 Hence the proposition \ref{extdis} follows by induction.
 \end{proof}
 \end{prop}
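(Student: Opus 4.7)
The plan is to recast the scheme \eqref{SI1} as an abstract operator equation on the reflexive Banach space $W_0^{1,p}(D)$ and appeal to the Brezis theorem (surjectivity of bounded, coercive, pseudomonotone operators). Define $\mathcal{T}:W_0^{1,p}(D)\to W^{-1,p'}(D)$ by
\[
\langle \mathcal{T}(u),v\rangle:=\int_D uv\,dx+\tau\int_D\bigl(|\nabla u|^{p-2}\nabla u+\vec{f}(u)\bigr)\cdot\nabla v\,dx.
\]
Since $\bar{u}_k\in L^2(D)$ and $H$ is Lipschitz with $H(0)=0$, the right-hand side $\bar{u}_k+\tau H(\bar{u}_k)h_{k+1}$ lies in $L^2(D)\hookrightarrow W^{-1,p'}(D)$, so existence of $\bar{u}_{k+1}$ reduces to surjectivity of $\mathcal{T}$.

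First I would verify boundedness and coercivity. The Lipschitz assumption together with $\vec{f}(0)=\vec{0}$ gives the growth $|\vec{f}(s)|\le L|s|$, and combining this with H\"older yields $\|\mathcal{T}(u)\|_{W^{-1,p'}(D)}\le C(1+\|u\|_{W_0^{1,p}(D)}^{p-1})$. Testing $\mathcal{T}(u)$ against $u$ produces
\[
\langle \mathcal{T}(u),u\rangle=\|u\|_{L^2(D)}^2+\tau\|\nabla u\|_{L^p(D)}^p+\tau\int_D\vec{f}(u)\cdot\nabla u\,dx,
\]
and the flux cross-term is bounded via $\epsilon$-Young with exponents $p,p'$ so that a small multiple of $\tau\|\nabla u\|_{L^p(D)}^p$ absorbs it, the remaining $\|u\|_{L^p(D)}^{p'}$ being controlled by interpolation between $L^2(D)$ and $W_0^{1,p}(D)$. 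This yields $\langle \mathcal{T}(u),u\rangle/\|u\|_{W_0^{1,p}(D)}\to\infty$.

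The key structural step is pseudomonotonicity. The leading $p$-Laplacian piece $u\mapsto -\Delta_p u$ is classically monotone, hemicontinuous and bounded, hence pseudomonotone. The identity piece $u\mapsto u$, viewed as a map $W_0^{1,p}(D)\to W^{-1,p'}(D)$, factors through the compact embedding $W_0^{1,p}(D)\hookrightarrow L^2(D)$ valid on bounded Lipschitz $D$. For the convective term, whenever $u_n\rightharpoonup u$ in $W_0^{1,p}(D)$, Rellich--Kondrachov gives $u_n\to u$ strongly in $L^p(D)$, so $\vec{f}(u_n)\to\vec{f}(u)$ strongly in $L^p(D;\mathbb{R}^d)$ by Lipschitz continuity, making $u\mapsto \mathrm{div}\,\vec{f}(u)$ strongly continuous from $W_0^{1,p}(D)$ into $W^{-1,p'}(D)$. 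Since pseudomonotonicity is preserved under addition of strongly continuous perturbations, $\mathcal{T}$ is pseudomonotone, and Brezis' theorem \cite[Theorem~2.6]{roubi} yields surjectivity.

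For uniqueness, I would test the difference of two solutions $u_1,u_2$ of $\mathcal{T}(u)=g$ against $u_1-u_2$. The identity term contributes $\|u_1-u_2\|_{L^2(D)}^2$, the $p$-Laplacian pairing is nonnegative by monotonicity, and the Lipschitz flux contribution is handled by H\"older, Poincar\'e, and $\epsilon$-Young, forcing $u_1=u_2$. The single step that demands the most care is pseudomonotonicity of $\mathcal{T}$: because $\mathrm{div}\,\vec{f}(u)$ is neither monotone nor of variational potential type, the argument crucially relies on the compactness of $W_0^{1,p}(D)\hookrightarrow L^p(D)$, which reduces this convective term to a lower-order strongly continuous perturbation of the monotone $p$-Laplace principal part. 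Once the proposition is proved for a single $k$, the scheme is iterated in $k=0,1,\ldots,N-1$ by induction, with $\bar{u}_0=u_{0,\tau}$ supplied by \eqref{eq:1.1}.
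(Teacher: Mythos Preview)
Your existence argument is correct and follows the same route as the paper: define $\mathcal{T}$, verify boundedness, coercivity, pseudomonotonicity, and invoke Brezis' theorem \cite[Theorem~2.6]{roubi}. One simplification you miss: since $\vec{f}(u)\cdot\nabla u=\operatorname{div}\vec F(u)$ with $\vec F$ a primitive of $\vec f$, one has $\int_D\vec f(u)\cdot\nabla u\,dx=0$ for $u\in W_0^{1,p}(D)$, so the flux cross-term in $\langle\mathcal{T}(u),u\rangle$ vanishes outright and no $\epsilon$-Young/interpolation is needed for coercivity. The paper uses this identity elsewhere (see the derivation of \eqref{eq:estimation}).

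Your uniqueness sketch, however, has a genuine gap. Testing the difference against $u_1-u_2$ and using Lipschitz continuity of $\vec f$ with H\"older/Young yields
\[
\|u_1-u_2\|_{L^2(D)}^2+c\tau\|\nabla(u_1-u_2)\|_{L^p(D)}^p
\le C_\epsilon\tau\,\|u_1-u_2\|_{L^2(D)}^{p'}+\epsilon\tau\|\nabla(u_1-u_2)\|_{L^p(D)}^p.
\]
Since $p'<2$, the inequality $a^2\le C\tau a^{p'}$ only gives the bound $a\le (C\tau)^{1/(2-p')}$, not $a=0$; the flux term is not absorbed. The argument the paper points to (via \cite[Lemma~1]{gv1}) is instead the $L^1$-contraction device used later in Subsection~\ref{sec:uniqueness}: test with $\zeta_\vartheta'(u_1-u_2)$ for a smooth convex approximation $\zeta_\vartheta$ of $|\cdot|$. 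The $p$-Laplacian pairing is then nonnegative because $\zeta_\vartheta''\ge 0$, and the flux contribution satisfies
\[
\Big|\int_D(\vec f(u_1)-\vec f(u_2))\cdot\nabla(u_1-u_2)\,\zeta_\vartheta''(u_1-u_2)\,dx\Big|
\le c_f K_2\int_D|\nabla(u_1-u_2)|\,\mathbf 1_{\{|u_1-u_2|\le\vartheta\}}\,dx\to 0
\]
as $\vartheta\to 0$ by dominated convergence, leaving $\int_D|u_1-u_2|\,dx\le 0$. This is the step your H\"older/Poincar\'e/Young outline cannot replace.
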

 Now, we define some functions on the whole time interval $[0,T].$
  \begin{defi}
          For $N \in \mathbb{N}$, $\tau > 0$ define the right-continuous step functions
          \[ u_{\tau}(t) = \sum_{k=0}^{N-1} \bar{u}_{k+1} \chi_{[t_k,t_{k+1})} (t), \quad t \in [0,T], \]
          \[ h_{\tau}(t) = \sum_{k=0}^{N-1} {h}_{k+1} \chi_{[t_k,t_{k+1})} (t), \quad t \in [0,T], \]
          the left-continuous step function
          \[\hat{u}_{\tau}(t) = \sum_{k=0}^{N-1} \bar{u}_{k} \chi_{(t_k,t_{k+1}]} (t), \quad t \in [0,T], \]
          and the piecewise affine function
          \[ \tilde{u}_{\tau}(t) = \sum_{k=0}^{N-1} \left (\frac{\bar{u}_{k+1}-\bar{u}_{k}}{\tau} (t-t_k) + \bar{u}_{k}  \right ) \chi_{[t_k,t_{k+1})} (t), \quad t \in [0,T), \tilde{u}_{\tau}(T) =\bar{u}_N. \]
         \end{defi}
 We wish to derive a-priori estimates for $\{\bar{u}_k\}, u_\tau$ and $\tilde{u}_\tau$.  Regarding this, we have the following lemma.
         \begin{lem}
          There exists a constant $L > 0$, independent of $\tau$, such that
\[\underset{n=0,...,N}\max   \left \| \bar{u}_{n} \right \|^2_{L^2(D)} \leq L, \int_{0}^{T} \left \|{u}_{\tau}(t) \right \|^2_{L^2(D)} \,dt \leq L, \]
\begin{align} \label{bdd0}
      \underset{t \in [0,T]}\sup\left \| u_{\tau}(t) \right \|^2_{L^2(D)}   = \underset{t \in [0,T]}\sup \left \| \tilde{u}_{\tau}(t) \right \|^2_{L^2(D)} \leq \underset{n=0,...,N}\max   \left \| \bar{u}_{n} \right \|^2_{L^2(D)}  \leq L. 
\end{align}
 Moreover, 
 \begin{align} \label{bdd001}
     \int_{0}^{T}\int_{D} \left | \nabla{u_{\tau}} \right |^p dx \, dt  &  \leq L , \quad   \int_{0}^{T}\int_{D} \left | \nabla{\tilde{u}_{\tau}} \right |^p dx \, dt  \leq L,  \notag \\
& \left \|u_{\tau} - \tilde{u}_{\tau} \right \|^2_{L^2(D_{(0,T)})} \leq L \tau .
  \end{align}
Furthermore, 
\begin{align} \label{bdd1}
    \left \| \tilde{u}_{\tau}(t) \right \|^p_{L^p([0,T];W_0^{1,p}(D))} \leq  L.
\end{align}
    \end{lem}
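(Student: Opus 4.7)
The plan is to test the discrete equation \eqref{SI1} with $\bar{u}_{k+1}$ in the $L^2(D)$ inner product, invoke the elementary identity $(a-b)a=\tfrac12(a^2-b^2+(a-b)^2)$ applied to the time increment, and extract the standard energy balance
\[
\tfrac12\|\bar{u}_{k+1}\|_{L^2}^2-\tfrac12\|\bar{u}_{k}\|_{L^2}^2+\tfrac12\|\bar{u}_{k+1}-\bar{u}_{k}\|_{L^2}^2+\tau\|\nabla\bar{u}_{k+1}\|_{L^p}^p + \tau\int_D \vec{f}(\bar{u}_{k+1})\cdot\nabla\bar{u}_{k+1}\,dx = \tau\bigl(H(\bar{u}_{k})h_{k+1},\bar{u}_{k+1}\bigr).
\]
The flux term is then eliminated by the chain rule: choosing any primitive $\vec{F}:\mathbb{R}\to\mathbb{R}^d$ with $\vec{F}'=\vec{f}$, we have $\vec{f}(\bar{u}_{k+1})\cdot\nabla\bar{u}_{k+1}=\operatorname{div}(\vec{F}(\bar{u}_{k+1}))$, whose integral over $D$ vanishes because $\bar{u}_{k+1}\in W_0^{1,p}(D)$ and $\vec F(0)=0$.

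Next I would treat the right-hand side by writing $\bar{u}_{k+1}=\bar{u}_{k}+(\bar{u}_{k+1}-\bar{u}_{k})$; Cauchy–Schwarz together with the Lipschitz property \ref{A3} (so $|H(\bar{u}_k)|\le C|\bar{u}_k|$) gives
\[
\tau\bigl(H(\bar{u}_{k})h_{k+1},\bar{u}_{k+1}\bigr)\le C\tau(1+|h_{k+1}|^2)\|\bar{u}_{k}\|_{L^2}^2+\tfrac14\|\bar{u}_{k+1}-\bar{u}_{k}\|_{L^2}^2+C\tau^2|h_{k+1}|^2\|\bar{u}_{k}\|_{L^2}^2.
\]
The increment piece is absorbed into the numerical dissipation on the left. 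Summing from $k=0$ to $n-1$, using \eqref{eq:1.1} to bound $\|\bar{u}_0\|_{L^2}$ and \eqref{esti:proj-time} to control $\tau\sum_k|h_{k+1}|^2$, the discrete Gronwall lemma yields $\max_{n}\|\bar{u}_n\|_{L^2}^2\le L$ and, consequently, $\tau\sum_{k=0}^{N-1}\|\nabla\bar{u}_{k+1}\|_{L^p}^p\le L$ and $\sum_{k=0}^{N-1}\|\bar{u}_{k+1}-\bar{u}_{k}\|_{L^2}^2\le L$.

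Passing from $\{\bar{u}_k\}$ to the step/piecewise-affine reconstructions is then routine. Since $u_\tau$ is piecewise constant with values among $\{\bar{u}_k\}$, the first three inequalities in \eqref{bdd0} follow immediately, and $\int_0^T\|\nabla u_\tau\|_{L^p}^p\,dt=\tau\sum_k\|\nabla\bar{u}_{k+1}\|_{L^p}^p\le L$. For $\tilde{u}_\tau$ I would use convexity of $\|\cdot\|_{L^2}^2$ and of $\int_D|\nabla\cdot|^p\,dx$: on $[t_k,t_{k+1})$, $\tilde{u}_\tau=\lambda\bar{u}_{k+1}+(1-\lambda)\bar{u}_k$ with $\lambda=(t-t_k)/\tau\in[0,1]$, so $\|\tilde{u}_\tau(t)\|_{L^2}^2\le\max_n\|\bar{u}_n\|_{L^2}^2$ and $\int_0^T\|\nabla\tilde{u}_\tau\|_{L^p}^p\,dt\le\tau\sum_k\|\nabla\bar{u}_{k+1}\|_{L^p}^p\le L$, which also gives \eqref{bdd1}. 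Finally, a direct computation on each subinterval shows $u_\tau(t)-\tilde{u}_\tau(t)=(\bar{u}_{k+1}-\bar{u}_k)\bigl(1-(t-t_k)/\tau\bigr)$, whence
\[
\|u_\tau-\tilde{u}_\tau\|_{L^2(D_{(0,T)})}^2=\sum_{k=0}^{N-1}\|\bar{u}_{k+1}-\bar{u}_{k}\|_{L^2}^2\int_{t_k}^{t_{k+1}}\!\!\bigl(1-\tfrac{t-t_k}{\tau}\bigr)^2dt=\tfrac{\tau}{3}\sum_{k=0}^{N-1}\|\bar{u}_{k+1}-\bar{u}_{k}\|_{L^2}^2\le L\tau.
\]

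The only slightly delicate step is the handling of the deterministic forcing $\tau H(\bar{u}_k)h_{k+1}$: because $h_{k+1}$ is only square-summable in the sense of \eqref{esti:proj-time} (and not bounded), one must keep the factor $|h_{k+1}|^2$ on the Gronwall coefficient rather than on the increment. Everything else is book-keeping with the monotonicity/convexity of the $p$-Laplace operator and the chain-rule argument for the nonlinear flux.
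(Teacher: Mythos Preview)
Your argument is correct and follows the paper's route almost verbatim: test \eqref{SI1} with $\bar{u}_{k+1}$, use $(a-b)a=\tfrac12(a^2-b^2+(a-b)^2)$, kill the flux term via the chain rule, bound the forcing and apply the discrete Gronwall lemma, then read off the bounds for $u_\tau$ and $\tilde{u}_\tau$. The only cosmetic differences are that the paper absorbs $\|\bar{u}_{k+1}\|_{L^2}^2$ into the left-hand side via Young's inequality with a $\tau$-dependent parameter (rather than your splitting $\bar{u}_{k+1}=\bar{u}_k+(\bar{u}_{k+1}-\bar{u}_k)$), and that your convexity step for $\tilde{u}_\tau$ actually yields $\int_0^T\|\nabla\tilde{u}_\tau\|_{L^p}^p\,dt\le\tau\sum_k\|\nabla\bar{u}_{k+1}\|_{L^p}^p+\tfrac{\tau}{2}\|\nabla\bar{u}_0\|_{L^p}^p$, the extra $\bar{u}_0$ term being controlled by \eqref{eq:1.1} exactly as the paper does.
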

    
  \begin{proof}
     We take a test function $v=\bar{u}_{k+1}$ in  \eqref{SI1} to obtain
   \begin{align*}
    & (\bar{u}_{k+1} - \bar{u}_{k}, \bar{u}_{k+1}) -
    \tau \left \langle {\rm div}( |\nabla {\bar{u}}_{k+1}|^{p-2}\nabla {\bar{u}}_{k+1} +\vec{f}(\bar{u}_{k+1})), \bar{u}_{k+1} \right \rangle = \tau ( H(\bar{u}_{k})h_{k+1},\bar{u}_{k+1})\,.
   \end{align*}
   Using the Young's inequality and the identity $ (x -y) x = \frac{1}{2}\left [x^2 - y^2 + {(x-y)}^2 \right ] \forall \, x,y \in \mathbb{R}$ 
   along with the fact that $\displaystyle \int_{D} \vec{f}(v). \nabla v \, dx = 0 \,~ \forall \, v \in W_{0}^{1,p}(D)$, we arrive at
   
   \begin{align}
   \label{eq:estimation}
   \frac{1}{2}\left [\left \| \bar{u}_{k+1} \right \|_{L^2(D)}^{2} - \left \| \bar{u}_{k} \right \|_{L^2(D)}^{2} + \left \|\bar{u}_{k+1}-\bar{u}_{k}  \right \|_{L^2(D)}^{2} \right ]  + \tau \left \| \nabla \bar{u}_{k+1}  \right \|_{L^p(D)}^{p} \notag \\
   \leq \tau \left \| H(\bar{u}_{k}) \right \|_{L^2(D)} |h_{k+1}| \left \| \bar{u}_{k+1} \right \|_{L^2(D)}.
   \end{align}
 \vspace{0.05cm}
   
  Discarding nonnegative term and applying Young's inequality together with \ref{A3}, we have
  
   \begin{align}
         \frac{1}{2}\left [\left \| \bar{u}_{k+1} \right \|_{L^2(D)}^{2} - \left \| \bar{u}_{k} \right \|_{L^2(D)}^{2} \right ] & \leq \tau \left [\frac{1}{2 \delta} \left \| \bar{u}_{k+1} \right \|_{L^2(D)}^{2} + \frac{\delta}{2} \left \| H(\bar{u}_{k}) \right \|_{L^2(D)}^{2} |h_{k+1}|^{2} \right ] \notag \\
         & \leq   \frac{1}{4} \left \| \bar{u}_{k+1} \right \|_{L^2(D)}^{2} + C {\tau}^{2} \left \| \bar{u}_{k} \right \|_{L^2(D)}^{2} |h_{k+1}|^{2} \quad (for \ \delta = \frac{\tau}{2}). \notag 
          \end{align}
        Therefore, we get
         \begin{align}
        \frac{1}{4}\left [\left \| \bar{u}_{k+1} \right \|_{L^2(D)}^{2} - \left \| \bar{u}_{k} \right \|_{L^2(D)}^{2} \right ]  \leq C {\tau}^{2} \left \| \bar{u}_{k} \right \|_{L^2(D)}^{2} |h_{k+1}|^{2}.
   \end{align}
   For fixed $n \in \left \{1,2,...,N  \right \}$, we take the sum over $k=0,1,...,n-1$ in above equation and obtain
   \[ \left \| \bar{u}_{n} \right \|_{L^2(D)}^{2} - \left \| \bar{u}_{0} \right \|_{L^2(D)}^{2} \leq C {\tau}^{2} \sum_{k=0}^{n-1}  \left \| \bar{u}_{k} \right \|_{L^2(D)}^{2} |h_{k+1}|^{2}. \]
   Using the discrete Gr{\"o}nwall's inequality along with \eqref{esti:proj-time}, we conclude that
   \begin{align}
   \label{eq:estimation1}
   \sup_{0\le n\le N}     \left \| \bar{u}_{n} \right \|_{L^2(D)}^{2} \leq L\,,
   \end{align}
   for some $L>0$, independent of $\tau$. By definition of $u_{\tau}$, $\tilde{u}_{\tau}$ and \eqref{eq:estimation1}, one easily get the estimation of \eqref{bdd0}. 
   \vspace{0.1cm}

   To get \eqref{bdd001}, we proceed as follows.  Taking the sum over $0,1,...,N-1$ in \eqref{eq:estimation}, discarding nonnegative term and using \eqref{eq:estimation1}, \eqref{esti:proj-time}, we have
    \begin{align*}
        \int_{0}^{T} \left \| \nabla {u}_{\tau}  \right \|_{L^p(D)}^{p} dt & = \sum_{k=0}^{N-1} \int_{t_k}^{t_{k+1}} \left \| \nabla \bar{u}_{k+1}  \right \|_{L^p(D)}^{p} dt = \tau \sum_{k=0}^{N-1}  \left \| \nabla \bar{u}_{k+1}  \right \|_{L^p(D)}^{p} \\
       & \leq \left \| \bar{u}_{0} \right \|_{L^2(D)}^{2} + C {\tau}^{2} \sum_{k=0}^{N-1}  \left \| \bar{u}_{k} \right \|_{L^2(D)}^{2} |h_{k+1}|^{2} 
        \leq  C {\tau}^{2} \sum_{k=0}^{N-1} |h_{k+1}|^{2} \leq  L.
    \end{align*} 
    Similar estimate holds for $\tilde{u}_\tau$.  Now, by definition of $u_{\tau}$, $\tilde{u}_{\tau}$ and the  fact that 
        $$ \sum_{k=0}^{N-1}  \left \| \bar{u}_{k+1}-\bar{u}_{k} \right \|^2_{L^2(D)}\le C,$$
     which could be derived from \eqref{eq:estimation} together with  \eqref{eq:estimation1}, we have
         \begin{align}
        \int_{0}^{T} \left \|u_{\tau} - \tilde{u}_{\tau} \right \|^2_{L^2(D)} \, dt & = \sum_{k=0}^{N-1} \int_{t_k}^{t_{k+1}}  \left \| \left (1-\frac{t-t_k}{\tau} \right )  (\bar{u}_{k+1}-\bar{u}_{k}) \right \|^2_{L^2(D)} \, dt \notag \\
        & = \sum_{k=0}^{N-1} \int_{t_k}^{t_{k+1}}  \left \| \frac{t_{k+1} - t}{\tau}  (\bar{u}_{k+1}-\bar{u}_{k}) \right \|^2_{L^2(D)} \, dt \notag \\
        & = \frac{\tau}{3} \sum_{k=0}^{N-1}  \left \| \bar{u}_{k+1}-\bar{u}_{k} \right \|^2_{L^2(D)} \leq L \tau. \notag
    \end{align}
 Note that 
 \begin{align*}
   \left \| \tilde{u}_{\tau}(t) \right \|^p_{L^p([0,T];W_0^{1,p}(D))} \le C \Big(  \int_{0}^{T} \left \| \nabla {u}_{\tau}  \right \|_{L^p(D)}^{p}\,dt + \tau \|\nabla \bar{u}_0\|_{L^p(D)}^p \Big)\,.
 \end{align*}
 Hence the assertion \eqref{bdd1} follows from the above estimate once we use \eqref{bdd001} and \eqref{eq:1.1}.  This completes the proof. 
  \end{proof}
\subsection{Convergence analysis of $\{\tilde{u}_\tau\}$} This subsection is devoted to analyze the convergence of the family $\{\tilde{u}_\tau\}$ in some appropriate space. Before that, we need some preparation. Let $(\mathbb{X}, \left \| \cdot \right \|_{\mathbb{X}})$ be a separable metric space. For any $q > 1$ and $\alpha \in (0,1)$, Let $W^{\alpha,q}([0,T];\mathbb{X})$ be the Sobolev space  of all $u \in L^{q}([0,T];\mathbb{X})$ such that
\begin{align} \label{norm def1}
   \int_{0}^{T} \int_{0}^{T} \frac{\left \| u(t) - u(s) \right \|_{\mathbb{X}}^{q}}{\left | t-s \right |^{1+\alpha q}} \, dt \, ds < \infty 
\end{align}
with the norm 
\[ \left \| u \right \|_{W^{\alpha,q}([0,T];\mathbb{X})}^{q} = \int_{0}^{T}  \left \| u(t) \right \|_{\mathbb{X}}^{q} \, dt + \int_{0}^{T} \int_{0}^{T} \frac{\left \| u(t) - u(s) \right \|_{\mathbb{X}}^{q}}{\left | t-s \right |^{1+\alpha q}} \, dt \, ds.  \]

\begin{lem}
\label{lem:cpt0}
The following estimation holds: for $\alpha \in (0, \frac{1}{p})$
\[ \sup_{\tau > 0} \left\{ \left\| \tilde{u}_{\tau}\right\|_{W^{\alpha,p}([0,T];{W^{-1,p'}(D)})}  \right\}  < \infty. \]
\end{lem}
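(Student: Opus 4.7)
The plan is to exploit the fact that $\tilde{u}_\tau$ is piecewise affine in time, so that the discrete scheme \eqref{SI1} provides an explicit expression for $\frac{d\tilde{u}_\tau}{dt}$ as a sum of three pieces of distinct time-integrability; the $W^{\alpha,p}([0,T];W^{-1,p'}(D))$ norm is then controlled by writing
\[
\tilde{u}_\tau(t)-\tilde{u}_\tau(s)=\int_s^t\frac{d\tilde{u}_\tau}{dr}(r)\,dr
\]
and applying H\"older's inequality to each piece separately.

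Concretely, on each $(t_k,t_{k+1})$ the scheme gives $\frac{d\tilde{u}_\tau}{dt}(t)=g_1(t)+g_2(t)+g_3(t)$, where
\[
g_1(t)={\rm div}(|\nabla u_\tau|^{p-2}\nabla u_\tau),\qquad g_2(t)={\rm div}(\vec{f}(u_\tau)),\qquad g_3(t)=H(\hat{u}_\tau)\,h_\tau.
\]
The duality bound $\|{\rm div}\,\vec{F}\|_{W^{-1,p'}}\le\|\vec{F}\|_{L^{p'}}$, the identity $(p-1)p'=p$, and the a-priori estimate \eqref{bdd001} together force $\{g_1\}$ to be uniformly bounded in $L^{p'}([0,T];W^{-1,p'}(D))$. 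Assumption \ref{A2} and the $L^\infty(L^2)$ bound \eqref{bdd0} (together with $L^2(D)\hookrightarrow W^{-1,p'}(D)$) give $\{g_2\}$ uniformly in $L^\infty([0,T];W^{-1,p'}(D))$, while \ref{A3}, \eqref{bdd0} and \eqref{esti:proj-time} give $\{g_3\}$ uniformly in $L^2([0,T];W^{-1,p'}(D))$.

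The $L^p([0,T];W^{-1,p'}(D))$-part of the target norm is immediate from \eqref{bdd0}. For the Slobodeckij seminorm, H\"older's inequality applied to $\int_s^t\|g_i\|_{W^{-1,p'}}\,dr$ with exponents $q_1=p'$, $q_2=\infty$, $q_3=2$ yields, once raised to the $p$-th power, upper bounds of order $(t-s)\bigl(\int_s^t\|g_1\|^{p'}\bigr)^{p-1}$, $(t-s)^p$, and $(t-s)^{p/2}\bigl(\int_s^t\|g_3\|^2\bigr)^{p/2}$ respectively. Substituting into $\int_0^T\!\int_0^T|t-s|^{-1-\alpha p}\|\tilde{u}_\tau(t)-\tilde{u}_\tau(s)\|_{W^{-1,p'}}^p\,dt\,ds$ and using the uniform Bochner bounds above, the critical contribution is that of $g_1$, which reduces to a constant times $\int_0^T\!\int_0^T|t-s|^{-\alpha p}\,dt\,ds$, finite exactly when $\alpha p<1$. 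The $g_2$ and $g_3$ contributions are sub-critical, since for $\alpha<1/p<1/2$ both exponents $p-1-\alpha p$ and $p/2-1-\alpha p$ strictly exceed $-1$.

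The principal delicacy is the exponent arithmetic: the equality $(p-1)p'=p$ places ${\rm div}(|\nabla u_\tau|^{p-2}\nabla u_\tau)$ in $L^{p'}$ in time and no better, and together with $p(1-1/p')=1$ this is precisely what produces the sharp threshold $\alpha<1/p$. Any attempt to control $g_1$ in a stronger Bochner $L^q$ would demand information beyond \eqref{bdd001}; the $p$-Laplacian term is therefore the binding one, while the flux and noise contributions are harmless at this scale.
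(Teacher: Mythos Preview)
Your proposal is correct and follows essentially the same approach as the paper's proof: the paper writes $\tilde{u}_\tau(t)=u_{0,\tau}+\int_0^t g_1+\int_0^t g_2+\int_0^t g_3$ and bounds each increment $\|\mathcal{K}_i^\tau(t)-\mathcal{K}_i^\tau(s)\|_{W^{-1,p'}}^p$ exactly as you do, obtaining powers $(t-s)$, $(t-s)^{p-1}$, $(t-s)^{p/2}$ respectively, with the $p$-Laplacian term fixing the threshold $\alpha<1/p$. The only cosmetic difference is that you phrase the decomposition in terms of $\frac{d\tilde{u}_\tau}{dt}$ and identify the Bochner exponents $L^{p'},L^\infty,L^2$ for $g_1,g_2,g_3$ up front, which makes the H\"older arithmetic slightly more transparent, whereas the paper estimates each integral increment directly; the substance is identical.
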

\begin{proof}
By Sobolev embedding $W_{0}^{1,p}(D) \hookrightarrow W^{-1,p'}(D)$, we get
\begin{align}
 \int_{0}^{T} \left \|\tilde{u}_{\tau}  \right \|_{W^{-1,p'}(D)}^{p} \, dt  \leq C \int_{0}^{T} \left \|\tilde{u}_{\tau} \right \|_{W_{0}^{1,p}(D)}^{p} \, dt  \leq C. \label{esti:1-frac-sov}
 \end{align}
We rewrite \eqref{SI1} in terms of ${u}_{\tau}, {h}_{\tau}, \hat{u}_{\tau}$ and $\tilde{u}_{\tau}$ as
\begin{align} \label{eq:cpt000}
    \tilde{u}_{\tau}(t) &= u_{0, \tau} + \int_{0}^{t} {\rm div}_x ( |\nabla {u}_{\tau}|^{p-2}\nabla {u}_{\tau}) \, ds
    +  \int_{0}^{t} {\rm div}_x \vec{f}({u}_{\tau}) \, ds + \int_{0}^{t} H(\hat{u}_{\tau}){h}_{\tau}(s) \, ds \notag\\
    & \equiv \mathcal{K}_{0}^\tau(t) + \mathcal{K}_{1}^{\tau}(t) + \mathcal{K}_{2}^{\tau}(t) + \mathcal{K}_{3}^{\tau}(t)\,.
\end{align}
In view  of \eqref{esti:1-frac-sov}, it remains to show  that $\mathcal{K}_{i}^{\tau}(t)$ satisfies \eqref{norm def1} for $0 \leq i \leq 3$ and $\alpha \in (0,\frac{1}{p})$.
Since $\mathcal{K}_{0}^{\tau}(t)$ is independent of time, clearly it satisfies \eqref{norm def1} for any  $\alpha \in (0,1)$.
W.L.O.G., we assume that $s < t$. By employing Jensen's inequality and \cite[Equation 4.1.14]{claud}, we have 

\begin{align}
    \left \|\mathcal{K}_{1}^{\tau}(t) - \mathcal{K}_{1}^{\tau}(s)   \right \|_{W^{-1,p'}(D)}^{p}  & \leq \left \| \int_{s}^{t} \Delta_p {u}_{\tau}  \, dr  \right \|_{W^{-1,p'}(D)}^{p} \leq \bigg( \int_{s}^{t} \left \| \Delta_p {u}_{\tau}  \right \|_{W^{-1,p'}(D)}  \, dr  \bigg)^{p}  \notag \\
     & \leq \bigg(\int_{s}^{t} \left \|{u}_{\tau} \right \|_{W_{0}^{1,p}(D)}^{p-1}  \, dr \bigg)^{p} \leq C (t-s) \bigg( \int_{0}^{T} \left \|{u}_{\tau} \right \|_{W_{0}^{1,p}(D)}^{p} \, dr  \bigg)^{p-1}. \notag
   \end{align}
  Thus, there exists a constant $C>0$, independent of $\tau$, such that for any $\alpha \in (0, \frac{1}{p})$, 
\begin{align} \label{eq:cpt01}
   \int_{0}^{T}\int_{0}^{T} \frac{\left \|\mathcal{K}_{1}^{\tau}(t) - \mathcal{K}_{1}^{\tau}(s)   \right \|_{W^{-1,p'}(D)}^{p}}{\left | t-s \right |^{1+\alpha p}} \, dt \, ds \leq C.  
\end{align}
We use the assumption \ref{A2} to get
\begin{align*}
    \left \|\mathcal{K}_{2}^{\tau}(t) - \mathcal{K}_{2}^{\tau}(s)   \right \|_{W^{-1,p'}(D)}^{p} &\leq \left \| \int_{s}^{t}  {\rm div}_x \vec{f}({u}_{\tau}) \, dr   \right \|_{W^{-1,p'}(D)}^{p} \leq  C \bigg( \int_{s}^{t} \left \|  {u}_{\tau}  \right \|_{L^{2}(D)} \, dr \bigg)^{p}  \notag \\
    & \leq C (t-s)^{p-1} \int_{0}^{T} \left \| {u}_{\tau} \right \|_{W_0^{1,p}(D)}^{p} \, dr,
\end{align*}
and hence  for any $\alpha \in (0,\frac{1}{p})$
\begin{align} \label{eq:cpt02}
    \int_{0}^{T}\int_{0}^{T} \frac{\left \|\mathcal{K}_{2}^{\tau}(t) - \mathcal{K}_{2}^{\tau}(s)   \right \|_{W^{-1,p'}(D)}^{p}}{\left | t-s \right |^{1+\alpha p}} \, dt \, ds \leq C.
\end{align}

Similarly,  by employing assumption \ref{A3}, one has
\begin{align}
    \left \|\mathcal{K}_{3}^{\tau}(t) - \mathcal{K}_{3}^{\tau}(s)   \right \|_{L^{2}(D)}^{p} &\leq \left \| \int_{s}^{t}  H({u}_{\tau}){h}_{\tau}(r) \, dr  \right \|_{L^{2}(D)}^{p} \leq C \bigg( (t-s) \int_{s}^{t}  \left \| {u}_{\tau} \right \|_{L^2(D)}^{2} \left | {h}_{\tau}(r) \right |^{2} \, dr \bigg)^{\frac{p}{2}}  \notag \\
    &\leq  C \bigg( (t-s) \sup_{0\leq t\leq T}\left \| {u}_{\tau} \right \|_{L^2(D)}^{2} \int_{s}^{t} \left | {h}_{\tau}(r) \right |^{2} \, dr  \bigg)^{\frac{p}{2}} \leq C (t-s)^{\frac{p}{2}} \bigg( \int_{0}^{T} \left | {h}_{\tau}(r) \right |^{2} \, dr  \bigg)^{\frac{p}{2}}.  \notag
\end{align}

Therefore, by \eqref{esti:proj-time},  we have
\begin{align} \label{eq:cpt03}
   \displaystyle \int_{0}^{T}\int_{0}^{T} \frac{\left \|\mathcal{K}_{3}^{\tau}(t) - \mathcal{K}_{3}^{\tau}(s)   \right \|_{W^{-1,p'}(D)}^{p}}{\left | t-s \right |^{1+\alpha p}} \, dt \, ds \leq C, \quad \forall \, \alpha \in (0,\frac{1}{p}).
\end{align}
Putting the inequalities \eqref{eq:cpt01}-\eqref{eq:cpt03} in \eqref{eq:cpt000}, we arrive at the assertion that
$$ \underset {\tau > 0}\sup \left\| \tilde{u}_{\tau}\right\|_{W^{\alpha,p}([0,T];{W^{-1,p'}(D)})} \leq C, \quad \forall \, \alpha \in (0,\frac{1}{p}), $$
where $C>0$ is a constant, independent of $\tau$.
\end{proof}
As mentioned earlier, we would like to have strong convergence of $\{\tilde{u}_\tau\}$ in some appropriate space. For that, we will use the following well-known lemma.
  \begin{lem} \cite[Theorem 2.1]{fland}
\label{lem:cpt}
Let $\mathbb{X} \subset \mathbb{Y} \subset \mathbb{X^*}$ be Banach spaces, $\mathbb{X}$ and $\mathbb{X^*}$ reflexive, with \\
compact embedding of $\mathbb{X}$ in $\mathbb{Y}$. For any $q \in (1,\infty)$ and $\alpha \in (0,1)$, the  embedding of \\ $L^{q}([0,T];\mathbb{X}) \cap W^{\alpha,q}([0,T];\mathbb{X^*}) $  equipped with natural norm in $L^{q}([0,T];\mathbb{Y})$ is compact.
\end{lem}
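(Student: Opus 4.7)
The plan is to prove the compact embedding via Ehrling's interpolation inequality combined with the Kolmogorov--Riesz compactness criterion in Bochner spaces. Let $\{u_n\}$ be an arbitrary bounded sequence in $L^{q}([0,T];\mathbb{X})\cap W^{\alpha,q}([0,T];\mathbb{X}^*)$; the goal is to extract a subsequence converging strongly in $L^{q}([0,T];\mathbb{Y})$. Since $\mathbb{X}$ is compactly embedded in $\mathbb{Y}$ and $\mathbb{Y}$ is continuously embedded in $\mathbb{X}^*$, Ehrling's lemma supplies, for every $\eta>0$, a constant $C_\eta>0$ such that $\|v\|_{\mathbb{Y}}\le \eta\|v\|_{\mathbb{X}}+C_\eta\|v\|_{\mathbb{X}^*}$ for all $v\in \mathbb{X}$. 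Applying this pointwise in time to $u_n(t)-u_m(t)$ and integrating yields
\[
\|u_n-u_m\|_{L^{q}([0,T];\mathbb{Y})} \le \eta\,C \sup_k\|u_k\|_{L^{q}([0,T];\mathbb{X})} + C_\eta \|u_n-u_m\|_{L^{q}([0,T];\mathbb{X}^*)}.
\]
Hence it is enough to establish relative compactness of $\{u_n\}$ in $L^{q}([0,T];\mathbb{X}^*)$, after which a standard diagonal extraction over $\eta\to 0$ produces a limit in $L^{q}([0,T];\mathbb{Y})$.

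To obtain compactness in $L^{q}([0,T];\mathbb{X}^*)$ I would verify the two hypotheses of the Kolmogorov--Riesz criterion. Boundedness is automatic from the continuous inclusion $\mathbb{X}\hookrightarrow \mathbb{X}^*$. The essential point is the uniform-in-$n$ decay of time translates, that is, for every $\epsilon>0$ there should exist $\delta>0$ with
\[
\sup_n \int_0^{T-h}\|u_n(t+h)-u_n(t)\|_{\mathbb{X}^*}^{q}\,dt < \epsilon \quad \text{for all }0<h<\delta.
\]
I would extract this from the Gagliardo seminorm defining $W^{\alpha,q}([0,T];\mathbb{X}^*)$: changing variables $(t,s)\mapsto(t,h:=s-t)$ by Fubini shows that $h\mapsto h^{-(1+\alpha q)}\int_0^{T-h}\|u(t+h)-u(t)\|_{\mathbb{X}^*}^{q}\,dt$ is $L^{1}$-integrable on $(0,T)$ with norm controlled by $[u]_{W^{\alpha,q}}^{q}$. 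A Chebyshev-type selection (or an averaging on dyadic scales $h\in[2^{-k-1},2^{-k}]$) then converts this averaged bound into a pointwise decay of the form $\int_0^{T-h}\|u(t+h)-u(t)\|_{\mathbb{X}^*}^{q}\,dt=O(h^{\alpha' q})$ for any $\alpha'<\alpha$ along a sequence $h\to 0$, which is all Kolmogorov--Riesz needs.

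The main obstacle is exactly this translation estimate: the Gagliardo seminorm does not dominate $\|u(\cdot+h)-u(\cdot)\|_{L^{q}(\mathbb{X}^*)}h^{-\alpha}$ pointwise in $h$, only in an integrated sense, so one cannot simply differentiate under the integral sign. The cleanest remedy is to pass to an equivalent Nikolskii-type seminorm $\mathcal{N}^{\alpha',q}$ (with $\alpha'<\alpha$) in which pointwise-in-$h$ translation bounds are built into the very definition and which is continuously contained in $W^{\alpha,q}$; the loss of a bit of regularity is harmless because any positive exponent $\alpha' q>0$ suffices to close the Kolmogorov--Riesz step. The hypothesis $\alpha>0$ is therefore crucial---it is precisely what defeats the $h^{-(1+\alpha q)}$ singularity of the Gagliardo kernel.
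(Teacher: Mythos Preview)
The paper does not supply a proof of this lemma; it is quoted as Theorem~2.1 of Flandoli--Gatarek, so there is no in-paper argument to compare against. I therefore assess your proposal on its own merits.

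Your overall plan --- Ehrling interpolation plus a translation estimate drawn from the $W^{\alpha,q}$ seminorm --- assembles the right ingredients, and your handling of the pointwise-in-$h$ translation bound via the embedding $W^{\alpha,q}\hookrightarrow N^{\alpha',q}$ (Nikolskii, $\alpha'<\alpha$) is sound. There is, however, a genuine gap at the step where you invoke ``the two hypotheses of the Kolmogorov--Riesz criterion'' in $L^q([0,T];\mathbb{X}^*)$. For Bochner spaces with infinite-dimensional target, boundedness and uniform decay of time translates are \emph{not} sufficient for relative compactness: take $u_n(t)=\phi(t)e_n$ with $(e_n)$ orthonormal in a Hilbert space $\mathbb{X}^*$ and $\phi$ smooth; both of your conditions hold, yet no subsequence converges in $L^q([0,T];\mathbb{X}^*)$. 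The correct vector-valued criterion (Simon, \emph{Compact sets in the space $L^p(0,T;B)$}, Theorem~1) requires a third hypothesis: the time-averages $\big\{\int_{t_1}^{t_2} u_n(t)\,dt\big\}_n$ must be relatively compact in the target for every $0\le t_1<t_2\le T$. In the present setting this condition does hold --- the averages are bounded in $\mathbb{X}$ by H\"older, and $\mathbb{X}\hookrightarrow\mathbb{X}^*$ is compact since it factors through the compact inclusion $\mathbb{X}\hookrightarrow\mathbb{Y}$ --- but you have omitted it, and without it your compactness claim in $L^q([0,T];\mathbb{X}^*)$ is false as stated.

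Once the missing hypothesis is supplied, the detour through $L^q([0,T];\mathbb{X}^*)$ becomes redundant: Simon's criterion applies directly in $L^q([0,T];\mathbb{Y})$, with the averaged-compactness condition holding in $\mathbb{Y}$ for the same reason, and the translation decay in $\mathbb{Y}$ obtained by applying your Ehrling inequality to $\tau_h u_n-u_n$ (rather than to $u_n-u_m$), namely
\[
\|\tau_h u_n-u_n\|_{L^q([0,T];\mathbb{Y})}\le 2\eta\sup_k\|u_k\|_{L^q([0,T];\mathbb{X})}+C_\eta\|\tau_h u_n-u_n\|_{L^q([0,T];\mathbb{X}^*)},
\]
the last term tending to $0$ uniformly in $n$ by your $W^{\alpha,q}$ argument. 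This is essentially the route taken in the cited reference.
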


Let $ \mathcal{O} = L^{2}([0,T];L^{2}(D)) \cap C([0,T];W^{-1,p'}(D)).$
\begin{thm}
\label{eq:cgs1}
There exists a subsequence of $\{\tilde{u}_\tau\}$, still denoted by $\{\tilde{u}_{\tau}\}$ and \\
$v \in L^{2}([0,T];L^{2}(D)) \, \cap \, L^{p}([0,T];W_{0}^{1,p}(D)) \, \cap \, L^{\infty}([0,T];L^{2}(D))$ such that
\begin{itemize}
    \item[(a)] $\tilde{u}_{\tau} \rightarrow v$ in $\mathcal{O},$
\item[(b)]  $\tilde{u}_{\tau} \rightharpoonup v $ in $L^p([0,T];W_0^{1,p}(D)),$
\item[(c)] $\tilde{u}_{\tau} \overset{*}{\rightharpoonup} v $ in $L^{\infty}([0,T];L^{2}(D)).$
\end{itemize}
\end{thm}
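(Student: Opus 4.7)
The plan is to extract, by successive thinning and a final diagonal step, a single subsequence of $\{\tilde u_\tau\}$ realising all three convergences. For (b), the uniform bound \eqref{bdd1} places $\{\tilde u_\tau\}$ in a ball of the reflexive space $L^p([0,T]; W_0^{1,p}(D))$, so Banach--Alaoglu yields a weakly convergent subsequence with some limit $v$. For (c), the bound \eqref{bdd0} places the family in a ball of $L^\infty([0,T]; L^2(D))$, which is the dual of the separable space $L^1([0,T]; L^2(D))$; another application of Banach--Alaoglu gives a further subsequence converging weak-$*$ to some limit, a priori denoted $\tilde v$.

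For the $L^2([0,T]; L^2(D))$ half of (a), I would invoke the Flandoli-type Lemma \ref{lem:cpt} with $\mathbb X = W_0^{1,p}(D)$, $\mathbb Y = L^2(D)$, $\mathbb X^* = W^{-1,p'}(D)$, $q = p$ and any fixed $\alpha \in (0, 1/p)$. The compact inclusion $\mathbb X \hookrightarrow \mathbb Y$ is provided by Rellich--Kondrachov on the bounded Lipschitz domain $D$, and the uniform boundedness of $\{\tilde u_\tau\}$ in $L^p([0,T]; \mathbb X) \cap W^{\alpha,p}([0,T]; \mathbb X^*)$ follows from \eqref{bdd1} and Lemma \ref{lem:cpt0}. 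A further extraction then gives strong convergence in $L^p([0,T]; L^2(D))$, and hence in $L^2([0,T]; L^2(D))$ by H\"older's inequality on the bounded interval $[0,T]$ since $p \ge 2$.

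The hard part is the $C([0,T]; W^{-1,p'}(D))$ half of (a), because Lemma \ref{lem:cpt} only supplies compactness in Bochner $L^q$-spaces. I would treat this via an Arzel\`a--Ascoli argument. For pointwise relative compactness, \eqref{bdd0} bounds $\{\tilde u_\tau(t)\}$ in $L^2(D)$ uniformly in $t$ and $\tau$, and dualising the compact Rellich embedding $W_0^{1,p}(D) \hookrightarrow L^2(D)$ yields a compact inclusion $L^2(D) \hookrightarrow W^{-1,p'}(D)$, so $\{\tilde u_\tau(t)\}$ is relatively compact in $W^{-1,p'}(D)$ for every $t$. For equicontinuity, the termwise estimates of $\mathcal K_1^\tau, \mathcal K_2^\tau, \mathcal K_3^\tau$ carried out in the proof of Lemma \ref{lem:cpt0} in fact deliver, before integration in $(s,t)$, a uniform pointwise H\"older modulus $\|\tilde u_\tau(t) - \tilde u_\tau(s)\|_{W^{-1,p'}(D)} \le C |t-s|^{\gamma}$ with $C$ and $\gamma > 0$ independent of $\tau$. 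Arzel\`a--Ascoli then furnishes a sub-subsequence converging in $C([0,T]; W^{-1,p'}(D))$.

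Finally, a single diagonal subsequence realises all four convergences simultaneously. Uniqueness of limits along the continuous embeddings $L^p([0,T]; W_0^{1,p}(D)) \hookrightarrow L^2([0,T]; L^2(D))$, $L^\infty([0,T]; L^2(D)) \hookrightarrow L^2([0,T]; L^2(D))$ and $L^2([0,T]; L^2(D)) \hookrightarrow L^2([0,T]; W^{-1,p'}(D))$ forces the four candidate limits to coincide with one element $v$, which therefore lies in $L^2([0,T]; L^2(D)) \cap L^p([0,T]; W_0^{1,p}(D)) \cap L^\infty([0,T]; L^2(D))$ and realises (a), (b), (c) at once.
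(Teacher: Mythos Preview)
Your proposal is correct and follows essentially the same route as the paper: Lemma~\ref{lem:cpt} together with the bounds \eqref{bdd1} and Lemma~\ref{lem:cpt0} for the $L^2([0,T];L^2(D))$ compactness, an Arzel\`a--Ascoli argument based on the pointwise H\"older estimate $\|\tilde u_\tau(t)-\tilde u_\tau(s)\|_{W^{-1,p'}(D)}\le C|t-s|^\beta$ extracted from the proof of Lemma~\ref{lem:cpt0} for the $C([0,T];W^{-1,p'}(D))$ part, and reflexivity/duality for (b) and (c). The only cosmetic difference is that you spell out the Banach--Alaoglu argument for (c) directly, whereas the paper defers to \cite[Lemma~17(6)]{gv1} and \cite[Lemma~3.8(iii)]{maj}, and you are slightly more explicit about the diagonal extraction and identification of limits.
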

\begin{proof}
\text{Proof of (a).}  
It is easy to view from  Lemma \ref{lem:cpt}  that $ L^{p}([0,T];W_{0}^{1,p}(D)) \, \cap \, W^{\alpha,p}([0,T];{W^{-1,p'}(D)})$ is compactly embedded in $L^{2}([0,T];L^{2}(D))$. Thanks to Lemmas \ref{lem:cpt} and \ref{lem:cpt0}, we see that $\left \{ \tilde{u}_{\tau} \right \}$ is precompact in $L^{2}([0,T];L^{2}(D))$. In view of the proof of Lemma \ref{lem:cpt0}, one get
\begin{align*}
\|\tilde{u}_\tau(t)-\tilde{u}_\tau(s)\|_{W^{-1,p'}(D)} \le C |t-s|^\beta 
\end{align*}
for some $\beta>0$. Moreover, by the compact embedding of $L^{2}(D) \hookrightarrow W^{-1,p'}(D)$ together with \eqref{bdd0}, we see that the family $\{\tilde{u}_\tau\}$ is uniformly bounded in $W^{-1,p'}(D)$. Hence  Arzel{\'a}–Ascoli theorem yields that the family  $\left \{ \tilde{u}_{\tau} \right \}$ is pre-compact in $C([0,T];W^{-1,p'}(D))$. Hence the assertion follows. 
\vspace{0.2cm}

\noindent{ Proof of ${\rm (b)}$.} We use the estimate \eqref{bdd1} and $p > 2$ to obtain that, there exists a subsequence of $\{\tilde{u}_\tau\}$, still denoted by $\{\tilde{u}_{\tau}\}$, and $Z \in L^p([0,T];W_0^{1,p}(D))$  such that $\tilde{u}_{\tau}\rightharpoonup Z $ in $L^{2}([0,T];W_0^{1,2}(D))$
i.e. $$\int_{0}^{T} \int_{D} \tilde{u}_{\tau}(t,x) \psi(t,x) \, dx \, dt  \rightarrow \int_{0}^{T} \int_{D} Z(t,x) \psi(t,x) \, dx \, dt  \quad \forall \, \psi \in L^{2}([0,T];W^{-1,2}(D)). $$
Note that $L^{2}(D) \hookrightarrow W^{-1,2}(D)$, so in particular $\tilde{u}_{\tau} \rightharpoonup Z $ in $L^{2}([0,T];L^{2}(D))$. By part $(a)$, we have seen that $\tilde{u}_{\tau}(t) \rightharpoonup v $ in $L^{2}([0,T];L^{2}(D))$, and hence  the assertion follows thanks to the uniqueness of weak limit.
\vspace{0.2cm}

\noindent{Proof of ${\rm (c)}$.} One can follow the similar arguments (under cosmetic changes) as in the proof of \cite[Lemma 17(6)]{gv1}, see also \cite[Lemma 3.8(iii)]{maj} to conclude the result ${\rm (c)}$. 
\end{proof}

\begin{rem} \label{rem1}
Results of Theorem \ref{eq:cgs1} remains valid if one replaces $\tilde{u}_{\tau}$ by $u_{\tau}$ or $\hat{u}_{\tau}$.  
\end{rem}

\begin{lem}
\label{eq:cgseq1}
For any $\phi \in W_0^{1,p}(D)$, the followings hold:
\begin{itemize}
    \item[(i)] $\displaystyle \underset{\tau \rightarrow 0}\lim \int_{0}^{T} \left<  {\rm div}_x \big(\vec{f}({u}_{\tau}(t)) -\vec{f}({v(t)})\big), \phi \right> \, dt = 0.$
    \item[(ii)]  $\displaystyle \underset{\tau \rightarrow 0}\lim \int_{0}^{T} \left<H(\hat{u}_{\tau}(t)){h}_{\tau}(t) - H(v(t))h(t) , \phi \right> \, dt = 0.$
    \item[(iii)] There exists $ F \in L^{p'}([0,T];L^{p'}(D))^{d}$ such that
    
  $\displaystyle \underset{\tau \rightarrow 0}\lim \int_{0}^{T} \left<  {\rm div}_x (|\nabla {u}_{\tau}(t)|^{p-2}\nabla {u}_{\tau}(t) - F(t)), \phi \right> \, dt = 0.$
\end{itemize}
Moreover, the limit function $v$ satisfies the weak form 
\begin{align}
 \displaystyle \left<v(t),\phi \right> = \left<u_{0}, \phi \right> + \int_{0}^{t} \left<  {\rm div}_x (F(s)+\vec{f}({v(s)})), \phi \right> \, ds + \int_{0}^{t} \left< H(v(s))h(s) , \phi \right> \, ds. \label{eq:weak-form-limit-function-skeleton}
 \end{align}
\end{lem}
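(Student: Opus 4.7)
The plan is to prove the three convergence statements in turn using the convergences already established in Theorem~\ref{eq:cgs1} and Remark~\ref{rem1}, then pass to the limit in the semi-discrete identity \eqref{eq:cpt000} to obtain the weak form \eqref{eq:weak-form-limit-function-skeleton}.

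For part (i), I would integrate by parts and use the Lipschitz bound from \ref{A2}: since $\phi\in W_0^{1,p}(D)$ one has $\nabla\phi\in L^2(D)$ (because $D$ is bounded and $p>2$), and
\[
\Big|\int_0^T\!\!\!\big\langle {\rm div}_x(\vec f(u_\tau)-\vec f(v)),\phi\big\rangle\,dt\Big|
\le L_f\,\sqrt{T}\,\|\nabla\phi\|_{L^2(D)}\,\|u_\tau-v\|_{L^2(D_{(0,T)})},
\]
which tends to $0$ by Remark~\ref{rem1}. For part (ii), I would split the difference as $H(\hat u_\tau)h_\tau-H(v)h=[H(\hat u_\tau)-H(v)]h_\tau+H(v)[h_\tau-h]$. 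The first piece is controlled via \ref{A3} and Cauchy--Schwarz by $L_H\,\|\phi\|_{L^2}\,\|h_\tau\|_{L^2(0,T)}\,\|\hat u_\tau-v\|_{L^2(L^2)}\to 0$, using \eqref{esti:proj-time} and Remark~\ref{rem1}. For the second piece, the function $g(t):=\int_D H(v(t,x))\phi(x)\,dx$ belongs to $L^2(0,T)$ (since $|H(v)|\le L_H|v|$ and $v\in L^\infty(0,T;L^2(D))$ by (c) of Theorem~\ref{eq:cgs1}), so $\int_0^T(h_\tau-h)g\,dt\to 0$ follows from $\Pi_\tau h\to h$ in $L^2(0,T)$ (see \eqref{est:SI1}).

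For part (iii), the a priori bound \eqref{bdd001} gives
\[
\int_0^T\!\!\!\int_D \big||\nabla u_\tau|^{p-2}\nabla u_\tau\big|^{p'}\,dx\,dt
=\int_0^T\!\!\!\int_D|\nabla u_\tau|^{p}\,dx\,dt \le L,
\]
so by Banach--Alaoglu there exist a subsequence (not relabelled) and $F\in L^{p'}([0,T];L^{p'}(D))^d$ with $|\nabla u_\tau|^{p-2}\nabla u_\tau\rightharpoonup F$ weakly in $L^{p'}(L^{p'})^d$. Testing against $\nabla\phi\in L^p(D)^d\hookrightarrow L^p([0,T];L^p(D))^d$ and integrating by parts yields the assertion. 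For the weak form, I would start from the semi-discrete identity \eqref{eq:cpt000}, pair it with a fixed $\phi\in W_0^{1,p}(D)$, and pass $\tau\to 0$ at each fixed $t\in[0,T]$: the LHS converges by $\tilde u_\tau\to v$ in $C([0,T];W^{-1,p'}(D))$, the initial term uses \eqref{eq:1.1}, and the three integral terms converge by the $\int_0^t$-analogues of (i)--(iii) (obtained by applying the same arguments to the indicator-truncated test function $\chi_{[0,t]}(s)\phi(x)$, which belongs to $L^p([0,T];W_0^{1,p}(D))$).

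The only genuine subtlety is part (iii): one cannot identify $F$ with $|\nabla v|^{p-2}\nabla v$ at this stage, since that would require a monotonicity/Minty-type argument involving the energy identity for $v$ and a uniform estimate on $\partial_t\tilde u_\tau$ — fortunately the lemma only asserts existence of such an $F$, so the weak compactness argument suffices here. The identification is deferred to a later step in the well-posedness proof of the skeleton equation.
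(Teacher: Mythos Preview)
Your proposal is correct and follows essentially the same approach as the paper: the same integration-by-parts and Lipschitz bounds for (i), the same splitting $H(\hat u_\tau)h_\tau-H(v)h=[H(\hat u_\tau)-H(v)]h_\tau+H(v)[h_\tau-h]$ with Cauchy--Schwarz and the strong convergence $\Pi_\tau h\to h$ for (ii), the same weak-$*$ compactness in $L^{p'}(L^{p'})^d$ for (iii), and the same term-by-term passage to the limit in \eqref{eq:cpt000} for the weak form. Your remark that the identification $F=|\nabla v|^{p-2}\nabla v$ is deferred is exactly what the paper does in the subsequent Subsection~\ref{subsec:existence-skeleton}.
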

\begin{proof}
\noindent{Proof of (i):} With the help of assumption \ref{A2}, integration by parts formula, and Remark \ref{rem1} we get, remembering $p>2$,
\begin{align*}
   &  \left | \int_{0}^{T} \left<  {\rm div}_x (\vec{f}({u}_{\tau}(t)) -\vec{f}({v(t)})), \phi \right> \, dt \right |  \leq \int_{0}^{T} \left | - \left<   \vec{f}({u}_{\tau}(t)) -\vec{f}({v(t)}), \nabla{\phi} \right> \right | \, dt \\
    & \leq \int_{0}^{T} ||\vec{f}({u}_{\tau}(t) -\vec{f}({v(t)})||_{L^{p'}(D)} ||\nabla{\phi}||_{L^{p}(D)} \, dt   \leq ||\nabla{\phi}||_{L^{p}(D)} \int_{0}^{T} ||\vec{f}({u}_{\tau}(t) -\vec{f}({v(t)})||_{L^{2}(D)} \, dt  \\
    & \leq C ||\phi||_{W_0^{1,p}(D)}  \int_{0}^{T} ||{u}_{\tau}(t) -{v(t)}||_{L^{2}(D)} \, dt  \leq C ||\phi||_{W_0^{1,p}(D)} ||{u}_{\tau}(t) -{v(t)}||_{L^2([0,T];L^2(D))} \rightarrow 0.
\end{align*}

Proof of (ii): Using $h_{\tau} \rightarrow h$, Theorem \ref{eq:cgs1} and  the assumption \ref{A3}, it is easy to see that
\begin{align*}
    & \left | \int_{0}^{T} \left<H(\hat{u}_{\tau}(t)){h}_{\tau}(t) - H(v(t))h(t) , \phi \right> \, dt \right | \\
     &\leq \left | \int_{0}^{T} \left< \big(H(\hat{u}_{\tau}(t)) - H(v(t)) \big) {h}_{\tau}(t) , \phi \right> \, dt + \int_{0}^{T} \left< \big(h_{\tau}(t) - h(t) \big)H(v(t)) , \phi \right> \, dt  \right | \\
     & \leq \int_{0}^{T} \left|\left< \big(H(\hat{u}_{\tau}(t)) - H(v(t)) \big) {h}_{\tau}(t) , \phi \right> \right| \, dt + \int_{0}^{T} \left| \left< \big(h_{\tau}(t) - h(t) \big)H(v(t)) , \phi \right> \right| \, dt \\
     & \leq \int_{0}^{T} C  ||\hat{u}_{\tau}(t)-v(t)||_{L^2(D)} |h_{\tau}(t)| \, ||\phi||_{W_0^{1,p}(D)} \, dt + \int_{0}^{T} C |h_{\tau}(t)-h(t)| \,  ||v(t)||_{L^2(D)}  ||\phi||_{W_0^{1,p}(D)} \, dt \\
     & \leq C \bigg(\int_{0}^{T} ||\hat{u}_{\tau}(t)-v(t)||_{L^2(D)}^{2} \, dt \bigg)^{\frac{1}{2}} \bigg(\int_{0}^{T} |h_{\tau}(t)|^2 dt \bigg)^{\frac{1}{2}} \notag \\
     & \hspace{2cm} + C \bigg(\int_{0}^{T} |h_{\tau}(t)-h(t)|^2 dt \bigg)^{\frac{1}{2}} \bigg(\int_{0}^{T} ||v(t)||_{L^2(D)}^2 \, dt \bigg)^{\frac{1}{2}} \\
     & \leq C \bigg(\int_{0}^{T} ||\hat{u}_{\tau}(t)-v(t)||_{L^2(D)}^{2} \, dt \bigg)^{\frac{1}{2}} + C \bigg(\int_{0}^{T} |h_{\tau}(t)-h(t)|^2 dt \bigg)^{\frac{1}{2}} \rightarrow 0.
\end{align*}
In other words, we get
\[\displaystyle \underset{\tau \rightarrow 0}\lim \int_{0}^{T} \left<H(\hat{u}_{\tau}(t)){h}_{\tau}(t) - H(v(t)h(t) , \phi \right> \, dt = 0.\]

\noindent{Proof of ${\rm (iii)}$}: By Theorem \ref{eq:cgs1} (see also Remark \ref{rem1}), there exists a subsequence such that $ \nabla {u}_{\tau} \rightharpoonup \nabla v $ in $ L^{p}([0,T];L^{p}(D))^{d}$  as $\tau \rightarrow 0$. Since $\left | |\nabla {u}_{\tau}|^{p-2}\nabla {u}_{\tau} \right |^{p'} = |\nabla {u}_{\tau}|^p,$ there exists $F$ such that $| \nabla {u}_{\tau}|^{p-2}\nabla {u}_{\tau} \rightharpoonup F $ in $ L^{p'}([0,T];L^{p'}(D))^{d}$ as $\tau \rightarrow 0$. Clearly, the following holds:
\[\displaystyle \underset{\tau \rightarrow 0}\lim \int_{0}^{T} \left<  {\rm div}_x (|\nabla {u}_{\tau}(t)|^{p-2}\nabla {u}_{\tau}(t) - F(t)), \phi \right> \, dt = 0.\]
In view of the convergence results in ${\rm (i)}$-${\rm (iii)}$, part ${\rm (a)}$ of Theorem \ref{eq:cgs1} and the first part of \eqref{eq:1.1}, one can pass to the limit in 
\eqref{eq:cpt000} to conclude easily that the limit function $v$ satisfies the weak form \eqref{eq:weak-form-limit-function-skeleton}. This completes the proof.
\end{proof}
\subsection{Existence of solution of \eqref{eq:skeleton}:} \label{subsec:existence-skeleton}
 We show that the limit function $v$ is indeed a solution of \eqref{eq:skeleton}. In view of the weak form 
\eqref{eq:weak-form-limit-function-skeleton}, it it is enough to prove that $F=|\nabla{v}|^{p-2}\nabla{v}$.  For this identification, we proceed as follows.
Take a test function $\bar{u}_{k+1}$ in \eqref{SI1} and  use the identity $ (x -y) x = \frac{1}{2}\left [x^2 - y^2 + {(x-y)}^2 \right ] \forall \, x,y \in \mathbb{R}$. The result is 
\begin{align*}
 & \frac{1}{2}[||\bar{u}_{k+1}||_{L^2(D)}^{2} - ||\bar{u}_{k}||_{L^2(D)}^{2} + ||\bar{u}_{k+1} - \bar{u}_{k}||_{L^2(D)}^{2}] +  \tau \int_{D} |\nabla {\bar{u}}_{k+1}|^{p-2}\nabla {\bar{u}}_{k+1} \cdot \nabla \bar{u}_{k+1} \, dx \\
   & \hspace{2cm} = \tau \int_{D} H(\bar{u}_{k})h_{k+1} \cdot \bar{u}_{k+1} \, dx.
\end{align*}
Summing over $k = 0,1,...,N-1$ and using the definition that $\tilde{u}_{\tau}(T) =\bar{u}_N$, we obtain
\begin{align}
     \frac{1}{2}||\tilde{u}_{\tau}(T)||_{L^2(D)}^{2} +  \int_{0}^{T} \int_{D}  |\nabla u_{\tau}(t)|^{p-2}\nabla u_{\tau}(t) \cdot \nabla u_{\tau}(t) \, dx \, dt \notag \\
     -  \int_{0}^{T} \int_{D}  H(\hat{u}_{\tau})(t)h_{\tau}(t) \cdot {u}_{\tau}(t) \, dx \, dt \leq \frac{1}{2}||u_{0, \tau}||_{L^2(D)}^{2}. \label{eq:discrte-ito-type}
\end{align}
On the other hand, from Lemma \ref{eq:cgseq1}, we get
\begin{align}
  \left \|v(T) \right \|_{L^2(D)}^{2}  + 2 \int_{0}^{T} \int_{D}   F \cdot \nabla {v(t)} \, dx \, dt  = \left \|u_{0} \right \|_{L^2(D)}^{2} + 2 \int_{0}^{T} \left<H(v(t))h(t),v(t) \right> \, dt .  \label{eq:ito-type}
\end{align}
Subtracting \eqref{eq:ito-type} from \eqref{eq:discrte-ito-type}, one has
\begin{align*}
   &  \frac{1}{2}  \big[||\tilde{u}_{\tau}(T)||_{L^2(D)}^{2}  - \left \|v(T) \right \|_{L^2(D)}^{2}\big] + \Big\{ \int_{0}^{T} \int_{D}  |\nabla u_{\tau}(t)|^{p-2}\nabla u_{\tau}(t) \cdot \nabla u_{\tau}(t) \, dx \, dt - \int_{0}^{T} \int_{D}   F \cdot \nabla {v(t)} \, dx \, dt \Big\} \\
     & \leq  \frac{1}{2} \big[||u_{0, \tau}||_{L^2(D)}^{2} - \left \|u_{0} \right \|_{L^2(D)}^{2}\big] + \Big\{ \int_{0}^{T}  \left \langle H(\hat{u}_{\tau}(t))h_{\tau}(t) \cdot {u}_{\tau}(t) \right \rangle dt -  \int_{0}^{T} \left<H(v(t))h(t),v(t) \right> \, dt\Big\}.
\end{align*}
It is easy to prove that,
\begin{align*}
    \begin{cases}
   \displaystyle  \underset{\tau}\liminf \, ||\tilde{u}_{\tau}(T)||_{L^2(D)}^{2}  - \left \|v(T) \right \|_{L^2(D)}^{2} \geq 0, \\ \\
    \displaystyle  ||u_{0, \tau}||_{L^2(D)}^{2} \rightarrow \left \|u_{0} \right \|_{L^2(D)}^{2},\\
   \\
    \displaystyle  \int_{0}^{T}  \left \langle H(\hat{u}_{\tau}(t)) h_{\tau}(t), {u}_{\tau}(t) \right \rangle dt \rightarrow  \int_{0}^{T} \left<H(v(t))h(t),v(t) \right> \, dt.
    \end{cases}
\end{align*}
Thus, we obtain
\begin{align}
    \underset{\tau}\limsup \left [ \int_{0}^{T} \int_{D}  |\nabla u_{\tau}(t)|^{p-2}\nabla u_{\tau}(t) \cdot \nabla u_{\tau}(t) \, dx \, dt \right ] \leq  \int_{0}^{T} \int_{D}   F \cdot \nabla {v(t)} \, dx \, dt .
\end{align}
 By using the following inequality
 \begin{align} \label{inequ}
     2^{2-p} |a-b|^{p} \leq \, (|a|^{p-2}a - |b|^{p-2}b) \cdot(a-b), \quad a,b \in \mathbb{R}^d,~~p>2
 \end{align}
  and the fact that ~(by Remark \ref{rem1})
 $$\nabla {u}_{\tau} \rightharpoonup \nabla v ~~\text{in}~~ L^{p}([0,T];L^{p}(D))^{d},\quad |\nabla {u}_{\tau}|^{p-2}\nabla{u}_{\tau} \rightharpoonup F ~~\text{in}~~ L^{p'}([0,T];L^{p'}(D))^{d},$$
 we obtain
 \begin{align*}
     & \underset{\tau \rightarrow 0}\limsup \left [ \int_{0}^{T} \int_{D}  |\nabla {u}_{\tau} - \nabla v|^{p} \, dx \, dt \right ] \\
      & \leq \underset{\tau \rightarrow 0}\limsup \left [ \int_{0}^{T} \int_{D} \left ( |\nabla {u}_{\tau}|^{p-2}\nabla {u}_{\tau}  - |\nabla v|^{p-2}\nabla v \right ) \cdot \nabla ({u}_{\tau} - v) \, dx \, dt \right ] \\
        & \leq \underset{\tau \rightarrow 0}\limsup \left [\int_{0}^{T} \int_{D} |\nabla {u}_{\tau}|^{p-2}\nabla {u}_{\tau} \cdot \nabla {u}_{\tau} \, dx \, dt \right ] - \int_{0}^{T} \int_{D} F \cdot \nabla v \, dx \, dt \leq 0.
 \end{align*}
Therefore, we get $\nabla {u}_{\tau} \rightarrow \nabla v ~~\text{in}~~ L^{p}([0,T];L^{p}(D))^{d}.$ Moreover, since $\left | |\nabla{u}_{\tau}|^{p-2}\nabla {u}_{\tau} \right |^{p'} = |\nabla{u}_{\tau}|^p$, one easily see that  $\left \| |\nabla {u}_{\tau}|^{p-2}\nabla{u}_{\tau} \right \|_{L^{p'}([0,T];L^{p'}(D))^{d}} \rightarrow \left \| |\nabla {v}|^{p-2}\nabla{v} \right \|_{L^{p'}([0,T];L^{p'}(D))^{d}} $. Hence by  Radon–Riesz property \cite{Robert} of the space $L^{p'}([0,T];L^{p'}(D))^{d}$, we arrive that $ |\nabla {u}_{\tau}|^{p-2}\nabla{u}_{\tau} \rightarrow |\nabla {v}|^{p-2}\nabla{v}$ in $L^{p'}([0,T];L^{p'}(D))^{d}$  i.e., $F = |\nabla {v}|^{p-2}\nabla{v}.$  This completes the existence proof. 
 
 
\subsection{Proof of uniqueness} \label{sec:uniqueness}
In this subsection, we prove uniqueness of the solution of \eqref{eq:skeleton} via $L^1$-contraction principle. To do so, let $h \in L^2([0,T]; \mathbb{R})$ be fixed and $u_{1}$ and $u_{2}$ be two solution of \eqref{eq:skeleton}. Let us consider the following convex approximation of the absolute value function. Let $\zeta : \mathbb{R} \rightarrow \mathbb{R}$ be a $C^{\infty}$ function satisfying 

\[ \zeta (0) = 0, \quad \zeta (-r) = \zeta (r), \quad \zeta^{'}(-r) = - \zeta^{'}(r), \quad \zeta^{''} \geq 0, \]
and
\begin{center}
    $ \zeta^{'}(r) = \left\{\begin{matrix}
  -1 & \text{when} \ r \leq 0, \\ 
  \in [-1,1] & \text{when} \ |r| < 1, \\
  1 & \text{when} \ r \geq 1.
\end{matrix}\right.$
\end{center}
 
 For any $\vartheta > 0$, define $\zeta_{\vartheta} : \mathbb{R} \rightarrow \mathbb{R}$ by $\zeta_{\vartheta} = \vartheta \zeta (\frac{r}{\vartheta})$. Then 
 \begin{align}
  |r| - K_1 \vartheta \leq \zeta_{\vartheta} \leq |r|,  \quad |\zeta_{\vartheta}^{''}(r)| \leq \frac{K_2}{\vartheta} \textbf{1}_{|r| \leq \vartheta}, \label{esti:approx}
  \end{align}
 where $K_1 = \underset{|r|\leq 1}\sup \left | |r| - \zeta(r) \right |$ and $K_2 = \underset{|r|\leq 1}\sup |\zeta^{''}(r)|$.
 \vspace{0.5cm}

Using chain-rule and integration by parts formula, we have
 \begin{align}
&\int_{D} \zeta_{\vartheta}(u_1(t) - u_2(t)) dx \notag \\
& =  - \int_{0}^{t}\int_{D} (|\nabla u_1|^{p-2}\nabla u_1 - |\nabla u_2|^{p-2}\nabla u_2 ) . \nabla(u_1 - u_2)(s) \, \zeta_{\vartheta}^{''}(u_1 - u_2) \, dx \, ds  \notag \\
  & \quad - \int_{0}^{t}\int_{D} (\vec{f}(u_1(s,x)) - \vec{f}(u_2(s,x))) \cdot \nabla(u_1 - u_2)(s) \, \zeta_{\vartheta}^{''}(u_1 - u_2) \, dx \, ds \notag \\
  & \qquad  + \int_{0}^{t}\int_{D} \zeta_{\vartheta}^{'}(u_1 - u_2) (H(u_1) - H(u_2)) \, h(s)  \, dx \, ds\,. \notag
 \end{align}
Thanks to the inequality \eqref{inequ} and the fact that $\zeta_{\vartheta}^{''} \geq 0$,  we see that 
 \begin{align*}
 &- (|\nabla u_1|^{p-2}\nabla u_1 - |\nabla u_2|^{p-2}\nabla u_2 ) \cdot \nabla(u_1 - u_2)(s) \, \zeta_{\vartheta}^{''}(u_1 - u_2) \\
 & \leq -C \, |\nabla(u_1 - u_2)|^{p} \, \zeta_{\vartheta}^{''}(u_1 - u_2) \leq 0,
 \end{align*}
and therefore we obtain
 \begin{align}  \label{eq:uni2}
     \int_{D} \zeta_{\vartheta}(u_1(t) - u_2(t)) \, dx  &
     \leq -  \int_{0}^{t}\int_{D} (\vec{f}(u_1(s,x)) - \vec{f}(u_2(s,x))) \cdot \nabla(u_1 - u_2)(s) \zeta_{\vartheta}^{''}(u_1 - u_2) \, dx \, ds  \notag \\ 
    &  \qquad + \int_{0}^{t}\int_{D} \zeta_{\vartheta}^{'}(u_1 - u_2) (H(u_1) - H(u_2)) \, h(s)  \, dx \, ds 
    \equiv \mathcal{I}_1 + \mathcal{I}_2\,. 
 \end{align}
 We estimate each of the above terms separately. Consider the term $\mathcal{I}_1$. In view of the assumption \ref{A2}, and the  property $\zeta_{\vartheta}^{''}(r) \leq \frac{K_2}{\vartheta} \textbf{1}_{  \left \{{|r| \leq \vartheta}  \right \}} $, we observe that
 \begin{align*}
     &(\vec{f}(u_1) - \vec{f}(u_2)) \cdot \nabla (u_1(s,x) - u_2(s,x)) \, \zeta_{\vartheta}^{''}(u_1 - u_2) \\
     &\leq c_f |u_1 - u_2| \ |\nabla(u_1 - u_2)| \ \frac{K_2}{\vartheta} \textbf{1}_{\{|u_1 - u_2| \leq \vartheta\}} \rightarrow 0 \ \ (\vartheta \rightarrow 0)
 \end{align*}
 for almost every $(t,x) \in D_{(0,T)}$. Moreover, 
 $$ |\vec{f}(u_1) - \vec{f}(u_2)|  \left | \nabla (u_1(s,x) - u_2(s,x)) \right | \zeta_{\vartheta}^{''}(u_1 - u_2) \leq c_f \, K_2 \left | \nabla(u_1 - u_2) \right | \in L^1(D_{(0,T)}).$$
 Thus, by dominated convergence, we conclude that $\mathcal{I}_1 \rightarrow 0$ as $\vartheta \rightarrow 0$.
 
 \vspace{0.2cm}
 We use the assumption \ref{A3} and the boundedness of $\zeta_{\vartheta}^{'}(\cdot) $ to estimate $\mathcal{I}_2$ as
 \begin{align*}
    |\mathcal{I}_2| \, & \leq C \int_{0}^{t}\int_{D} |u_1 - u_2| \, |h(s)| \, dx \, ds \leq C \int_{0}^{t} \Big( \int_{D} |u_1 - u_2|\, dx \Big) |h(s)|  \, ds\,.
    \end{align*}
Using these estimations in \eqref{eq:uni2} and sending $\vartheta \rightarrow 0$, we have
\begin{align*}
\int_{D} |u_1(t,x) - u_2(t,x)| \,dx \le C \int_0^t \left( \int_{D} |u_1(s,x) - u_2(s,x)| \,dx \right)\,|h(s)|\,ds\,.
\end{align*}
Since $h\in L^2(0,T; \R)$, by Gr{\"o}nwall's lemma, we have
 $$  \int_{D} |u_1(t,x) - u_2(t,x)|\,dx  = 0. $$
 i.e. $u_1(t,x) = u_2(t,x)$ for almost every $(t,x) \in D_{(0,T)}$.
 This complete the uniqueness proof.

\section{Proof of Large deviation principle~(Theorem \ref{thm:main})} \label{sec:LDP}
In this section, we give the proof of Theorem \ref{thm:main}. In view of Lemma \ref{lem:main}, we  wish to show that  the {\bf Condition $C$} is fulfilled by the solution $u_\eps$
of the equation \eqref{eq:ldp}. 

\subsection{Proof of condition \ref{C2}}
To prove condition \ref{C2}, since $S_M$ is weakly compact, it is enough to prove that if $h_{n} \rightharpoonup h$ in $L^2([0,T]; \mathbb{R})$ then $u_n \rightarrow u_h$ strongly in $C([0,T];L^{2}(D))\cap L^{p}([0,T];W_{0}^{1,p}(D))$ where $u_n$ and $u_h$ are the solution of \eqref{eq:skeleton} corresponding to $h_n$ and $h$ respectively.  We first derive a-priori estimate for $u_n$: there exists a constant $C$, independent of $n$ such that 
\begin{equation}
 \label{eq:ub}
    \begin{aligned}
       \sup_{s \in [0,T]} \left \|u_n(s) \right \|_{L^2(D)}^{2} + \int_{0}^{T} \left \|u_n(s) \right \|_{W_{0}^{1,p}(D)}^{p} ds    \leq C. 
    \end{aligned}
\end{equation}
Indeed, application of chain-rule, poincar{\'e} inequality and Cauchy-Schwartz inequality gives
\begin{align}
    \frac{d}{dt} \left \| u_n(t) \right \|_{L^2(D)}^{2} & = 2\left<{\rm div}_x ( |\nabla u_n|^{p-2}\nabla u_n 
    + \vec{f}(u_n)), u_n \right> + 2 \left< H(u_n)h_n(t),u_n(t)\right> \notag \\
    & \leq -C \left \| u_n \ \right \|_{W_{0}^{1,p}(D)}^{p}
    + C \left \| u_n(t) \right \|_{L^2(D)}^{2} + \left | h_n(t) \right |^2 \left \| u_n(t)) \right \|_{L^2(D)}^{2} \notag \\
    & \leq -C \left \| u_n \ \right \|_{W_{0}^{1,p}(D)}^{p} 
    + (C+ \left | h_n(t) \right |^2)\left \| u_n(t)) \right \|_{L^2(D)}^{2} \,. \notag
\end{align}
Integrating over time from $0$ to $t$, and then applying Gr{\"o}nwall's lemma together with the fact that $ \displaystyle \int_0^T h_n^2(t)\,dt \le M$, we have
$$ \sup_{n} \sup_{0\le t\le T}  \left \| u_n(t)) \right \|_{L^2(D)}^{2} \le C(T,M).$$
Moreover, one has 
\[ \sup_{n} \bigg[ \sup_{s \in [0,T]} \left \|u_n(s) \right \|_{L^2(D)}^{2} + \int_{0}^{T} \left \|u_n(s) \right \|_{W_{0}^{1,p}(D)}^{p} ds \bigg]   \leq C_{M}. \]
Furthermore,  following the proof of Lemma \ref{lem:cpt0} along with the boundedness  property of $h_n$, we get the following estimate.
\begin{align}
\sup_{n} \left\{ \left\| u_n\right\|_{W^{\alpha,p}([0,T];{W^{-1,p'}(D)})}  \right\}  < \infty \quad \text{for}~ \alpha \in (0,\frac{1}{p})\,. \label{esti:frac-Sov-sequence}
\end{align}
Therefore, in view of a-priori estimates \eqref{eq:ub} and \eqref{esti:frac-Sov-sequence} and  Lemma \ref{lem:cpt},  there exists a subsequence  of $\{u_n\}$, still denoted by $\left \{u_n \right \}$, such that $u_n \rightarrow \bar{u}$ in $L^{2}([0,T];L^{2}(D))$.

\begin{rem}\label{rem:con-1-sequence}
Results of Theorem \ref{eq:cgs1} remains valid if one replaces $\tilde{u}_{\tau}$ by $u_{n}$ and $v$ by $\bar{u}$.
\end{rem}
Thanks to Remark \ref{rem:con-1-sequence}, one can use a similar arguments  as invoked in Lemma \ref{eq:cgseq1} to conclude: for any $\phi \in W_0^{1,p}(D)$
\begin{align}\label{con:1-seq-cond-c2}
\begin{cases}
\displaystyle \underset{n\rightarrow \infty}\lim \int_{0}^{T} \left<  {\rm div}_x \big(\vec{f}(u_n(t)) -\vec{f}({\bar{u}(t)})\big), \phi \right> \, dt = 0\,, \\
 \displaystyle \underset{n\rightarrow \infty}\lim \int_{0}^{T} \left<  {\rm div}_x \big(|\nabla u_{n}(t)|^{p-2}\nabla u_{n}(t) - G(t)\big), \phi \right> \, dt = 0\,,
\end{cases}
\end{align}
for some  $ G \in L^{p'}([0,T];L^{p'}(D))^{d}$.  Observe that
\begin{align} \label{eq:cgs2.1}
     &\left |\int_{0}^{T} \left<H(u_n(t)){h}_n(t) - H(\bar{u}(t))h(t) , \phi \right> \, dt  \right | \notag \\
     &\leq \left | \int_{0}^{T} \left< \big(H(u_n(t)) - H(\bar{u}(t)) \big) {h}_n(t) , \phi \right> \, dt + \int_{0}^{T} \left< \big(h_{n}(t) - h(t) \big)H(\bar{u}(t)) , \phi \right> \, dt  \right | \notag \\
     &:= I_1(T) + I_2(T).
\end{align}
An application of H\"{o}lder's inequality, the assumption \ref{A3} and Remark  \ref{rem:con-1-sequence} reveals that
\begin{align*}
    I_1(T) & \leq \int_{0}^{T} \left \| \big(H(u_n(t)) - H(\bar{u}(t) \big) {h}_n(t) \right \|_{L^2(D)} \left \| \phi \right \|_{L^2(D)} dt \\
    & \leq \int_{0}^{T} \left \| H(u_n(t)) - H(\bar{u}(t)   \right \|_{L^2(D)} \left |{h}_n(t)  \right | \left \| \phi \right \|_{L^2(D)} dt \\
    & \leq C \int_{0}^{T} \left \| u_n - \bar{u} \right \|_{L^2(D)} \left |{h}_n(t)  \right | dt \\
     & \leq C  \bigg(\int_{0}^{T} \left \| u_n - \bar{u} \right \|_{L^2(D)}^{2} \, dt\bigg)^{\frac{1}{2}} \bigg(\int_{0}^{T}  \left |{h}_n(t)  \right |^2  \, dt \bigg)^{\frac{1}{2}} \rightarrow 0.
    \end{align*}
    Notice that the function $\displaystyle \psi(t): = \int_{D} H(\bar{u}(t,x) \phi(x) \, dx $  lies in $L^2([0,T]; \mathbb{R})$. Indeed,  thanks to the assumption \ref{A3} 
    \begin{align*}
   \|\psi\|_{L^2[0,T]}^2\leq \int_{0}^{T} \bigg(\int_{D} |H(\bar{u}(t,x)|^2 \, dx \bigg)  \bigg(\int_{D} |\phi(x)|^2 \, dx \bigg) \, dt 
     \leq C \left \| \phi \right \|_{L^2(D)}^2  \int_{0}^{T} \left \|\bar{u}(t) \right \|_{L^2(D)}^2 dt < \infty.
\end{align*}
Since $h_{n} \rightharpoonup h$  in $L^2([0,T];\mathbb{R})$, one then easily get
 $\displaystyle    I_2(T)=\int_0^T (h_n(t)-h(t))\psi(t)\,dt  \rightarrow 0$.  Therefore, we arrive at
 \begin{align}
 \lim_{n\goto \infty} \int_{0}^{T} \left<H(u_n(t)){h}_n(t) - H(\bar{u}(t))h(t) , \phi \right> \, dt =0\,. \label{con:sequence-3}
 \end{align}
 In view of Remark  \ref{rem:con-1-sequence}, \eqref{con:1-seq-cond-c2} and \eqref{con:sequence-3}, one can easily see that the limit function $\bar{u}$ satisfies weak formulation: for any $\phi \in W_0^{1,p}(D)$
 \begin{align}
\left<\bar{u}(t),\phi \right> = \left<u_{0}, \phi \right> + \int_{0}^{t} \left<  {\rm div}_x (G(s)+\vec{f}({\bar{u}(s)})), \phi \right> \, ds + \int_{0}^{t} \left< H(\bar{u}(s)h(s) , \phi \right> \, ds.\label{eq:limit-fun-skeleton-cond-c2}
\end{align}
To identify the function $G$, we first see that 
\begin{align*}
       \int_{0}^{T} \left |\int_{D} H(\bar{u}) \, \bar{u} \,dx  \right |^{2} dt & \leq \int_{0}^{T} \bigg(\int_{D} |H(\bar{u}|^2 \, dx  \bigg)  \bigg(\int_{D} |\bar{u}|^2 \, dx \bigg) \, dt \leq C \int_{0}^{T} \left \| \bar{u}(t) \right \|_{L^2(D)}^{4} dt < \infty,
\end{align*}
 and since  $h_{n} \rightharpoonup h$  in $L^2([0,T]; \mathbb{R})$, we have
\begin{align} \label{eq:lem4.4.1}
    \lim_{n\rightarrow \infty} \left | \int_{0}^{T} \left< H(\bar{u}(t)) \big(h_{n}(t) - h(t) \big) , \bar{u}(t) \right> \, dt  \right | = 0.
\end{align}
 In view of the assumption \ref{A3}, Remark  \ref{rem:con-1-sequence} and boundedness of $h_n$, we get
\begin{align}  \label{eq:lem4.4.2}
 \left |\int_{0}^{T} \left< H(u_n)h_n(t) ,u_n-\bar{u}\right> \, dt \right | & \leq \int_{0}^{T} \left \| H(u_n)  \right \|_{L^2(D)} \left | h_n \right |  \left \|u_n-\bar{u}  \right \|_{L^2(D)} \, dt \notag \\
 & \leq \bigg( \int_{0}^{T} C \left \| u_n  \right \|_{L^2(D)}^{2} \left \|u_n-\bar{u}  \right \|_{L^2(D)}^{2} \, dt \bigg)^{\frac{1}{2}} \bigg( \int_{0}^{T} \left | h_n \right |^{2} \, dt  \bigg)^{\frac{1}{2}} \notag \\
 & \leq C_M \bigg[ \sup_{s \in [0,T]} \left \| u_n  \right \|_{L^2(D)}^{2} \int_{0}^{T}  \left \|u_n-\bar{u}  \right \|_{L^2(D)}^{2} \, dt \bigg]^{\frac{1}{2}} \rightarrow 0.
  \end{align}
 \begin{align}  \label{eq:lem4.4.3}
 \left |\int_{0}^{T} \left< \big(H(u_n)-H(\bar{u})\big)h_n(t) , \bar{u}\right> \, dt \right | & \leq  \int_{0}^{T} \left \| H(u_n)-H(\bar{u})  \right \|_{L^2(D)} \left | h_n \right |  \left \|\bar{u}  \right \|_{L^2(D)} \, dt \notag \\
 & \leq \bigg( \int_{0}^{T} C \left \| u_n-\bar{u}  \right \|_{L^2(D)}^{2} \left \|\bar{u}  \right \|_{L^2(D)}^{2} \, dt \bigg)^{\frac{1}{2}} \bigg( \int_{0}^{T} \left | h_n \right |^{2} \, dt  \bigg)^{\frac{1}{2}} \notag \\
 & \leq C_M \bigg[ \sup_{s \in [0,T]} \left \| \bar{u}  \right \|_{L^2(D)}^{2} \int_{0}^{T}  \left \|u_n-\bar{u}  \right \|_{L^2(D)}^{2} \, dt \bigg]^{\frac{1}{2}} \rightarrow 0.
\end{align}
Combining \eqref{eq:lem4.4.1}, \eqref{eq:lem4.4.2} and \eqref{eq:lem4.4.3} , we have 

\begin{align}
\label{eq:proof}
  \int_{0}^{T} \left< H(u_n)h_n(t),u_n(t)\right>\, dt \rightarrow  \int_{0}^{T} \left<H(\bar{u}(t))h(t),\bar{u}(t) \right> \, dt\,.
\end{align}
Proceeding similarly as done in Subsection \ref{subsec:existence-skeleton} together with \eqref{eq:limit-fun-skeleton-cond-c2} and \eqref{eq:proof}, we can identify the function $G$ as
$$G = |\nabla \bar{u}(t)|^{p-2}\nabla \bar{u}(t)~~ \text{in}~~L^{p'}([0,T];L^{p'}(D))^d.$$ Moreover, thanks to uniqueness of solution of skeleton equation ~(cf.~Section \ref{sec:uniqueness}), $\bar{u}=u_h$ is the unique  solution of \eqref{eq:skeleton}.

 \begin{rem} \label{rem:2}
 For any $q \leq 2$, $u_n \rightarrow u_h$  in $L^{q}([0,T];L^{2}(D))$.
 \end{rem} 
To prove condition \ref{C2}, we need to show that $u_n \rightarrow u_h $  in $C([0,T];L^{2}(D))$. 
\begin{thm}
\label{thm:cgs1}
$u_n \rightarrow u_h $  in $ \mathcal{Z}:=C([0,T];L^{2}(D))\cap L^{p}([0,T];W_{0}^{1,p}(D))$.
\end{thm}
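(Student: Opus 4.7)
Having already established $u_n \to u_h$ in $L^2([0,T];L^2(D))$ together with the weak convergences in $L^p([0,T];W_0^{1,p}(D))$ and $L^\infty([0,T];L^2(D))$, the plan is to upgrade to strong convergence in $\mathcal{Z}$ via an energy estimate for the difference $w_n := u_n - u_h$. Since both $u_n, u_h \in C([0,T];L^2(D)) \cap L^p([0,T];W_0^{1,p}(D))$ and their time derivatives lie in $L^{p'}([0,T];W^{-1,p'}(D))$, the chain rule in the Gelfand triple applies to $w_n$ (with $w_n(0)=0$), giving
\begin{align*}
\|w_n(t)\|_{L^2(D)}^2 &= -2\int_0^t \!\!\int_D \big(|\nabla u_n|^{p-2}\nabla u_n - |\nabla u_h|^{p-2}\nabla u_h\big)\cdot \nabla w_n\,dx\,ds \\
&\quad -2\int_0^t \!\!\int_D \big(\vec{f}(u_n)-\vec{f}(u_h)\big)\cdot \nabla w_n\,dx\,ds + 2\int_0^t\!\!\int_D\big(H(u_n)h_n - H(u_h)h\big)w_n\,dx\,ds.
\end{align*}

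The first term is controlled by the strong monotonicity \eqref{inequ}, producing the coercive quantity $-C\int_0^t \|\nabla w_n\|_{L^p(D)}^p\,ds$. The flux term I would handle by Lipschitzness of $\vec{f}$, H\"older's inequality and Young with a small parameter, absorbing $\epsilon\|\nabla w_n\|_{L^p}^p$ into the coercive term and leaving a Gronwall-friendly remainder of the form $C_\epsilon\int_0^t\|w_n\|_{L^2}^{p'}\,ds$ (using the Poincar\'e-type embedding $L^2(D)\hookrightarrow L^{p'}(D)$ on the bounded domain, valid since $p>2$). In the stochastic diffusion term I split
\begin{align*}
H(u_n)h_n - H(u_h)h = \big(H(u_n)-H(u_h)\big)h_n + H(u_h)(h_n - h),
\end{align*}
the first summand giving $\leq C|h_n(s)|\,\|w_n\|_{L^2}^2$, to be absorbed by Gronwall since $\int_0^T|h_n|^2\,ds\le M$.

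The main obstacle is the leftover cross-term $J_n(t):=2\int_0^t(h_n(s)-h(s))\,\psi_n(s)\,ds$ where $\psi_n(s):=(H(u_h(s)),w_n(s))_{L^2(D)}$, since one only has $h_n\rightharpoonup h$ weakly in $L^2([0,T];\mathbb{R})$. I would show $\sup_{t\in[0,T]}|J_n(t)|\to 0$ by observing that $\|\psi_n\|_{L^2([0,T])}^2 \leq \big(\sup_{s}\|H(u_h(s))\|_{L^2}^2\big)\int_0^T\|w_n(s)\|_{L^2}^2\,ds \to 0$, thanks to the uniform bound on $u_h$ in $L^\infty([0,T];L^2(D))$ and the strong convergence $w_n\to 0$ in $L^2L^2$ (Remark~\ref{rem:2}). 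Since $\{h_n-h\}$ is bounded in $L^2([0,T];\mathbb{R})$, Cauchy--Schwarz gives $\sup_t|J_n(t)|\le \|h_n-h\|_{L^2}\,\|\psi_n\|_{L^2}\to 0$.

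Collecting the estimates, I arrive at
\begin{align*}
\|w_n(t)\|_{L^2(D)}^2 + C\int_0^t \|\nabla w_n\|_{L^p(D)}^p\,ds \leq \int_0^t \big(C + C|h_n(s)|^2\big)\|w_n(s)\|_{L^2(D)}^2\,ds + \eta_n(t),
\end{align*}
where $\eta_n(t)\to 0$ uniformly in $t$. Gr\"onwall's inequality (with the integrable weight $C+C|h_n|^2$ whose total mass is bounded by $CT+CM$) then yields $\sup_{t\in[0,T]}\|w_n(t)\|_{L^2(D)}^2 \to 0$, i.e.\ the $C([0,T];L^2(D))$ convergence. Plugging this back into the same energy inequality evaluated at $t=T$ forces $\int_0^T\|\nabla w_n\|_{L^p(D)}^p\,ds\to 0$, completing the convergence in $\mathcal{Z}$.
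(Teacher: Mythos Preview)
Your proof is correct and follows the same energy-estimate strategy as the paper: apply the chain rule to $w_n=u_n-u_h$, use the strong monotonicity \eqref{inequ} for the $p$-Laplacian, absorb the flux term via Young's inequality, and control the remaining terms with the already-established convergence $u_n\to u_h$ in $L^2([0,T];L^2(D))$ (Remark~\ref{rem:2}).

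The only notable difference is in how the diffusion remainder is dispatched. You split $H(u_n)h_n-H(u_h)h=(H(u_n)-H(u_h))h_n+H(u_h)(h_n-h)$, feed the first piece into a Gr\"onwall argument with weight $C+C|h_n|^2$, and handle the second via Cauchy--Schwarz. The paper instead keeps the two pieces $\int_0^T\langle H(u_n)h_n,w_n\rangle\,dt$ and $\int_0^T\langle H(u_h)h,w_n\rangle\,dt$ separate and shows \emph{both} vanish directly from $\|w_n\|_{L^2(0,T;L^2)}\to 0$ together with the uniform bounds $h_n,h\in S_M$ and $\sup_n\sup_t\|u_n(t)\|_{L^2}<\infty$; no Gr\"onwall step is needed at all, and the weak convergence $h_n\rightharpoonup h$ is never invoked. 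Your route works, but the paper's is shorter: once the $L^2L^2$ convergence is in hand, every lower-order term on the right-hand side (including the flux remainder $C_\delta\int_0^T\|w_n\|_{L^2}^{p'}\,ds$) already tends to zero, so one can take $\sup_{t}$ on the left and pass to the limit immediately.
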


\begin{proof}
We apply chain-rule for the equation satisfied by $u_n-u_h$ and use poincar{\'e} inequality together with Young's inequality to have

\begin{align}
  &  \frac{d}{dt} \left \| u_n(t) -u_h(t) \right \|_{L^2(D)}^{2} \notag \\
  & = 2\left<{\rm div}_x ( |\nabla u_n|^{p-2}\nabla u_n) - {\rm div}_x ( |\nabla u_h|^{p-2}\nabla u_h), u_n-u_h \right> \notag \\
    &\quad \quad + 2\left< {\rm div}_x \vec{f}(u_n)-  {\rm div}_x \vec{f}(u_h), u_n-u_h \right> 
     + 2 \left< H(u_n)h_n(t)-H(u_h)h(t) ,u_n-u_h\right> \notag  \\
    & \leq -C \left \| u_n-u_h  \right \|_{W_{0}^{1,p}(D)}^{p} + \frac{\delta}{2} \left \| u_n-u_h  \right \|_{W_{0}^{1,p}(D)}^{p} + C_{\delta}\left \| u_n-u_h \right \|_{L^2(D)}^{p'} \notag  \\
    & \hspace{2cm}+ 2 \left< H(u_n)h_n(t)-H(u_h)h(t) ,u_n-u_h\right>. \notag 
\end{align}
Choose $\delta>0$ such that $ \delta < 2C,$  we obtain
\begin{align} 
    & \frac{d}{dt} \left \| u_n(t) -u_h(t) \right \|_{L^2(D)}^{2}  \notag \\
    & \leq -C \left \| u_n-u_h  \right \|_{W_{0}^{1,p}(D)}^{p}  + C_{\delta}\left \| u_n-u_h \right \|_{L^2(D)}^{p'} 
    + 2 \left< H(u_n)h_n(t)-H(u_h)h(t) ,u_n-u_h\right>. \notag
\end{align}

 Integrating over time and using Remark \ref{rem:2}, we get
\begin{align}
   & \sup_{s \in [0,T]} \left \|u_n(s)-u_h(s) \right \|_{L^2(D)}^{2} + \int_{0}^{T} \left \|u_n(s)-u_h(s) \right \|_{W_{0}^{1,p}(D)}^{p} ds \notag \\  & \leq 2 \int_{0}^{T} \left< H(u_n)h_n(t) ,u_n-u_h\right> \, dt
     - 2 \int_{0}^{T} \left< H(u_h)h(t) ,u_n-u_h\right>\,dt.
     \label{inq:con-det-}
\end{align}
Thanks to Cauchy-Schwartz inequality together with Remark \ref{rem:2}, one has
\begin{align*}
 \int_{0}^{T} \left< H(u_n)h_n(t) ,u_n-u_h\right> \, dt & \leq \int_{0}^{T} \left \| H(u_n)  \right \|_{L^2(D)} \left | h_n \right |  \left \|u_n-u_h  \right \|_{L^2(D)} \, dt \\
 & \leq \bigg( \int_{0}^{T} C \left \| u_n  \right \|_{L^2(D)}^{2} \left \|u_n-u_h  \right \|_{L^2(D)}^{2} \, dt \bigg)^{\frac{1}{2}} \bigg( \int_{0}^{T} \left | h_n \right |^{2} \, dt  \bigg)^{\frac{1}{2}}  \\
 & \leq C_M \bigg[ \sup_{s \in [0,T]} \left \| u_n(s)  \right \|_{L^2(D)}^{2} \int_{0}^{T}  \left \|u_n-u_h  \right \|_{L^2(D)}^{2} \, dt \bigg]^{\frac{1}{2}} \rightarrow 0, \notag 
 \end{align*}
 Similarly, we have
 \begin{align*}
    \int_{0}^{T} \left< H(u_h)h(t) ,u_n-u_h\right> \, dt  & \leq C_M \bigg( \int_{0}^{T}  \left \|u_n-u_h  \right \|_{L^2(D)}^{2} \, dt \bigg)^{\frac{1}{2}} \rightarrow 0\,,
\end{align*}
where in the last inequality we have used the fact that 
$h_n, h \in S_M$. Combining these results in \eqref{inq:con-det-}, we arrive at the assertion. 
\end{proof}

\subsection{Proof of condition \ref{C1}}

In this subsection, we prove condition \ref{C1}. To proceed further, we consider the following equation: for $\epsilon>0$
\begin{align}
    dv^{\epsilon}  - {\rm div}_x ( |\nabla v^{\epsilon}|^{p-2} \nabla v^{\epsilon} +\vec{f}(v^{\epsilon})) \, dt &= H(v^{\epsilon})h(t)\,dt  + \epsilon\, H(v^{\epsilon})\, dW\,, \label{eq:epsilon-main}
\end{align}
where $ \{h^\epsilon\} \subset \mathcal{A}_M$ converges to $h$ in distribution as $S_{M}$-valued random variables.  Application of Girsanov theorem and uniqueness of equation \eqref{eq:ldp}, we conclude that the equation \eqref{eq:epsilon-main} has unique solution $ \displaystyle v^{\epsilon} = \mathcal{G}^{\epsilon}\bigg( W(\cdot) + \frac{1}{{\epsilon}} \int_0^{\cdot} h^{\epsilon}(s) \, ds \bigg) $. To  prove condition \ref{C1}, it is enough to show that $v^{\epsilon} \rightarrow u_h $ in distribution  on $\mathcal{Z}$ as $\epsilon \rightarrow 0$, where $u_h$ is the unique solution of the skeleton equation \eqref{eq:skeleton}. 
 \vspace{0.2cm}
 
 We assume that $\epsilon \leq 1$.  Firstly, we show essential uniform estimation of $v^{\epsilon}$ in the upcoming Lemmas \ref{lem:est2.1} and \ref{lem:est2.2}.
 
\begin{lem} There exists a constant $C>0$ such that the following a-priori estimate holds.
\label{lem:est2.1}
 $$ \sup_{\epsilon \in (0,1]} \mathbb{E} \bigg[ \sup_{s \in [0,T]} \left \| v^{\epsilon}(s) \right \|_{L^2(D)}^{2} + \int_{0}^{T}  \left \|  v^{\epsilon}(s) \right \|_{W_{0}^{1,p}(D)}^{p} ds \bigg] \leq C. $$   
\end{lem}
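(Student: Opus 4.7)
The strategy is to apply It\^o's formula to $\|v^\epsilon(\cdot)\|_{L^2(D)}^2$ in the Gelfand-triple sense, exploit the coercivity of the $p$-Laplace operator together with the vanishing of the flux contribution $\int_D \vec f(v^\epsilon)\cdot\nabla v^\epsilon\,dx=0$ (valid because $\vec f=\Psi'$ with $\Psi(0)=\vec 0$ and $v^\epsilon|_{\partial D}=0$), and close the resulting inequality by combining a \emph{pathwise} Gr\"onwall argument, which exploits the a.s.\ bound $\int_0^T|h^\epsilon(s)|^2\,ds\le M$ built into $\mathcal A_M$, with the Burkholder--Davis--Gundy inequality.

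Concretely, testing \eqref{eq:epsilon-main} against $v^\epsilon$ produces the identity
\[
\|v^\epsilon(t)\|_{L^2(D)}^2 + 2\int_0^t\|\nabla v^\epsilon\|_{L^p(D)}^p\,ds = \|u_0\|_{L^2(D)}^2 + 2\int_0^t(H(v^\epsilon)h^\epsilon,v^\epsilon)\,ds + \epsilon^2\int_0^t\|H(v^\epsilon)\|_{L^2(D)}^2\,ds + M^\epsilon(t),
\]
where $M^\epsilon(t):=2\epsilon\int_0^t(H(v^\epsilon),v^\epsilon)\,dW$. Assumption \ref{A3} gives $\|H(u)\|_{L^2(D)}\le C\|u\|_{L^2(D)}$, and applying $2ab\le a^2+b^2$ on the $h^\epsilon$-driven drift leads pathwise (using $\epsilon\le 1$) to
\[
\|v^\epsilon(t)\|_{L^2(D)}^2 + \int_0^t\|\nabla v^\epsilon\|_{L^p(D)}^p\,ds \le \|u_0\|_{L^2(D)}^2 + C\int_0^t\bigl(1+|h^\epsilon(s)|^2\bigr)\|v^\epsilon(s)\|_{L^2(D)}^2\,ds + M^\epsilon(t),
\]
with $C$ independent of $\epsilon$.

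I would then localise by $\tau_N:=\inf\{t\ge 0:\|v^\epsilon(t)\|_{L^2(D)}\ge N\}\wedge T$ and apply pathwise Gr\"onwall with the kernel $C(1+|h^\epsilon|^2)$, whose $L^1(0,T)$-norm is bounded deterministically by $C(T+M)$. This yields
\[
\sup_{r\le t\wedge\tau_N}\|v^\epsilon(r)\|_{L^2(D)}^2 + \int_0^{t\wedge\tau_N}\|\nabla v^\epsilon\|_{L^p(D)}^p\,ds \le C_{T,M}\Bigl(\|u_0\|_{L^2(D)}^2 + \sup_{r\le t\wedge\tau_N}|M^\epsilon(r)|\Bigr).
\]
Taking expectation and controlling $\mathbb E\sup|M^\epsilon|$ by BDG (yielding, for any $\delta>0$, $\delta\,\mathbb E\sup_{r\le t\wedge\tau_N}\|v^\epsilon\|_{L^2(D)}^2+C_\delta\epsilon^2\int_0^t\mathbb E\sup_{r\le s\wedge\tau_N}\|v^\epsilon(r)\|_{L^2(D)}^2\,ds$), choosing $\delta$ small enough to absorb the first term on the left, and then applying the deterministic Gr\"onwall lemma in $t$ produces a bound uniform in $\epsilon\in(0,1]$ and in $N$. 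Monotone convergence as $N\to\infty$ then completes the estimate, with the integral $\mathbb E\int_0^T\|\nabla v^\epsilon\|_{L^p(D)}^p\,ds$ controlled as a byproduct.

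The main technical obstacle is the simultaneous presence of the random Gr\"onwall kernel $(1+|h^\epsilon|^2)$ and the supremum of the stochastic integral: one cannot take expectation first and hope to decouple $|h^\epsilon(s)|^2\|v^\epsilon(s)\|_{L^2(D)}^2$ into independent factors, and BDG cannot be applied safely before a stopping-time truncation guarantees that the quantity to be absorbed is finite. Performing pathwise Gr\"onwall \emph{first} (to expose the martingale $M^\epsilon$ on the right-hand side) before taking expectation and running BDG, then closing by deterministic Gr\"onwall, resolves both issues, and the resulting constant remains uniform in $\epsilon\in(0,1]$ thanks to the $\mathcal A_M$-bound on $h^\epsilon$.
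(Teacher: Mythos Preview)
Your argument is correct and yields the stated estimate. However, it takes a genuinely different route from the paper in the treatment of the control-driven drift $\int_0^t (H(v^\epsilon)h^\epsilon,v^\epsilon)\,ds$.

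You bound $2(H(v^\epsilon)h^\epsilon,v^\epsilon)\le C(1+|h^\epsilon|^2)\|v^\epsilon\|_{L^2(D)}^2$ pathwise, then apply a \emph{pathwise} Gr\"onwall inequality with random kernel $C(1+|h^\epsilon|^2)$, exploiting the almost-sure bound $\int_0^T|h^\epsilon|^2\,ds\le M$ to control the exponential factor deterministically; only afterwards do you take expectation, invoke BDG, and close by a second (deterministic) Gr\"onwall step. The paper instead takes expectation immediately after It\^o and handles the control term by Cauchy--Schwarz in time,
\[
\mathbb E\int_0^t|h^\epsilon|\,\|v^\epsilon\|_{L^2(D)}^2\,ds \le \mathbb E\Big[\Big(\int_0^t|h^\epsilon|^2\,ds\Big)^{1/2}\Big(\int_0^t\|v^\epsilon\|_{L^2(D)}^4\,ds\Big)^{1/2}\Big]\le C_M\,\mathbb E\Big[\sup_{s\le t}\|v^\epsilon\|_{L^2(D)}^2\int_0^t\|v^\epsilon\|_{L^2(D)}^2\,ds\Big]^{1/2},
\]
followed by Young's inequality to absorb the supremum---so your remark that ``one cannot take expectation first and hope to decouple $|h^\epsilon|^2\|v^\epsilon\|^2$'' is in fact contradicted by what the paper does. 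The paper's route is shorter (a single Gr\"onwall step, no localisation), relying tacitly on the a~priori membership $v^\epsilon\in L^2(\Omega;C([0,T];L^2(D)))$ to justify the absorption; your route is slightly longer but more self-contained, since the stopping-time truncation guarantees finiteness before any absorption is performed.
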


\begin{proof}
By  applying It$\hat{o}$ formula, poincar{\'e} inequality and Cauchy-Schwartz inequality, we have 

\begin{align*}
     \frac{d}{dt} \left \| v^{\epsilon}(t) \right \|_{L^2(D)}^{2}  = & 2\left<{\rm div}_x ( |\nabla v^{\epsilon}(t)|^{p-2}\nabla v^{\epsilon}(t) + \vec{f}(v^{\epsilon}(t)), v^{\epsilon}(t) \right> + 2 \left< H(v^{\epsilon}(t)) \,
     h^{\epsilon}(t),v^{\epsilon}(t)\right> \notag \\
    &  \qquad + \epsilon^2  \left \| H(v^{\epsilon}(t))  \right \|_{L^2(D)}^{2}  + 2 \epsilon \left<v^{\epsilon}(t), H(v^{\epsilon}(t)) \, dW_t \right> \notag \\
     \leq & -C \left \|  v^{\epsilon}(t) \right \|_{W_{0}^{1,p}(D)}^{p}  + 2 \left \| H(v^{\epsilon}(t))  \right \|_{L^2(D)} \left | h^{\epsilon} \right | \left \| v^{\epsilon}(t)  \right \|_{L^2(D)} 
      + C {\epsilon} \left \| v^{\epsilon}(t)  \right \|_{L^2(D)}^{2} \notag \\
      &\qquad + 2 \epsilon \left< v^{\epsilon}(t), H(v^{\epsilon}(t)) \, dW(t) \right>. 
\end{align*}
 Using the assumption \ref{A3} and integrating over time, we have
\begin{align}
  & \displaystyle  \left \| v^{\epsilon}(t) \right \|_{L^2(D)}^{2} + C \int_{0}^{t} \left \|  v^{\epsilon}(s) \right \|_{W_{0}^{1,p}(D)}^{p} ds \notag \\
   & \leq \left \| u_0 \right \|_{L^2(D)}^{2} + \int_{0}^{t} C(\left|h^{\epsilon} \right|+\epsilon) \left \| v^{\epsilon}(s) \right \|_{L^2(D)}^{2} ds +  2 \epsilon \int_{0}^{t}   \left<v^{\epsilon}(s), H(v^{\epsilon}(s)) \, dW(s) \right>. \label{eq:est}
\end{align} 
Discarding nonnegative term and after taking the expectation, we get 
\begin{align} \label{eq:est2.1.0}
  \mathbb{E} \bigg[\sup_{s \in [0,t]} \left \| v^{\epsilon}(s) \right \|_{L^2(D)}^{2} \bigg]  & \leq \left \| u_0 \right \|_{L^2(D)}^{2} + \mathbb{E} \int_{0}^{t} C(\left|h^{\epsilon}(s) \right|  +\epsilon) \left \| v^{\epsilon}(s) \right \|_{L^2(D)}^{2} ds \notag \\
    & \quad +  2 \, \epsilon \, \mathbb{E} \Bigg[\sup_{s \in [0,t]} \left| \int_{0}^{s}   \left<v^{\epsilon}(r), H(v^{\epsilon}(r)) \, dW(r) \right> \right|  \Bigg]. 
\end{align}

By applying the Burkholder-Davis-Gundy inequality and  the assumption \ref{A3}, one has
\begin{align} \label{eq:est2.1.1}
    & 2 \, \epsilon \, \mathbb{E} \Bigg[\sup_{s \in [0,t]} \left| \int_{0}^{s}   \left<v^{\epsilon}(r), H(v^{\epsilon}(r)) \, dW(r) \right> \right|  \Bigg]  \notag \\
     & \leq 2 \, \epsilon \, \mathbb{E} \Bigg[ \int_{0}^{t} \left \| v^{\epsilon}(s) \right \|_{L^2(D)}^{2} \left \| H(v^{\epsilon}(s)) \right \|_{L^2(D)}^{2} ds \Bigg]^{\frac{1}{2}} \notag \\
     & \leq 2 \, \epsilon \, \mathbb{E} \Bigg[\sup_{s \in [0,t]}\left \| v^{\epsilon}(s) \right \|_{L^2(D)}^{2} C \int_{0}^{t}  \left \| v^{\epsilon}(s) \right \|_{L^2(D)}^{2} ds \Bigg]^{\frac{1}{2}} \notag \\
     & \leq 2 \, \epsilon \, \mathbb{E} \Bigg[\frac{\delta^2 }{4} \bigg(\sup_{s \in [0,t]}\left \| v^{\epsilon}(s) \right \|_{L^2(D)}^{2} \bigg)^{2} + C_{\delta^2 } \bigg(\int_{0}^{t}  \left \| v^{\epsilon}(s) \right \|_{L^2(D)}^{2} ds \bigg)^{2} \Bigg]^{\frac{1}{2}} \notag \\
     & \leq \epsilon \, \delta \, \mathbb{E} \bigg[\sup_{s \in [0,t]}\left \| v^{\epsilon}(s) \right \|_{L^2(D)}^{2} \bigg] + \epsilon \, C_{\delta^2 } \, \mathbb{E} \Bigg[ \int_{0}^{t}  \left \| v^{\epsilon}(s) \right \|_{L^2(D)}^{2} ds  \Bigg]\,
 \end{align}
 for some $\delta>0$. 
Applying H\"{o}lder's inequality and Young's inequality, we obtain 
\begin{align} \label{eq:est2.1.2}
      \mathbb{E} \left|\int_{0}^{t} \left|h^{\epsilon} \right| \left \| v^{\epsilon}(s) \right \|_{L^2(D)}^{2} \,ds \right| & \leq  \mathbb{E} \left|\bigg(\int_{0}^{t} \left|h^{\epsilon} \right|^2 \,ds \bigg)^{\frac{1}{2}} \bigg(\int_{0}^{t} \left \| v^{\epsilon}(s) \right \|_{L^2(D)}^{4} \,ds \bigg)^{\frac{1}{2}} \right| \notag \\
     &\leq C_M  \mathbb{E} \Bigg[\sup_{s \in [0,t]}\left \| v^{\epsilon}(s) \right \|_{L^2(D)}^{2} \int_{0}^{t}  \left \| v^{\epsilon}(s) \right \|_{L^2(D)}^{2} ds \Bigg]^{\frac{1}{2}} \notag \\
      & \leq C_M \delta \, \mathbb{E} \bigg[\sup_{s \in [0,t]}\left \| v^{\epsilon}(s) \right \|_{L^2(D)}^{2} \bigg] + C_M C_{\delta^2 } \mathbb{E} \Bigg[ \int_{0}^{t}  \left \| v^{\epsilon}(s) \right \|_{L^2(D)}^{2} ds \Bigg].
\end{align}
Therefore using \eqref{eq:est2.1.1} and \eqref{eq:est2.1.2} in \eqref{eq:est2.1.0}, we have for all $\epsilon \in (0,1]$ 
\begin{align*}
     \mathbb{E} \bigg[\sup_{s \in [0,t]} \left \| v^{\epsilon}(s) \right \|_{L^2(D)}^{2} \bigg] \leq \left \| u_0 \right \|_{L^2(D)}^{2} &+ C \mathbb{E} \int_{0}^{t}  \left \| v^{\epsilon}(s) \right \|_{L^2(D)}^{2} ds + (1+C_M) \, \delta \, \mathbb{E} \bigg[\sup_{s \in [0,t]}\left \| v^{\epsilon}(s) \right \|_{L^2(D)}^{2} \bigg]\\ 
     & + (1+C_M) \, C_{\delta^2 } \, \mathbb{E} \Bigg[ \int_{0}^{t}  \left \| v^{\epsilon}(s) \right \|_{L^2(D)}^{2} ds \Bigg].\\
\end{align*}
Choose $\delta$ such that $1-(1+C_M) \, \delta > 0,$.  We obtain
\begin{align*}
     \mathbb{E} \bigg[\sup_{s \in [0,t]} \left \| v^{\epsilon}(s) \right \|_{L^2(D)}^{2} \bigg] 
       \leq \left \| u_0 \right \|_{L^2(D)}^{2} + C \,\int_{0}^{t} \mathbb{E} \bigg[\sup_{s \in [0,t]}\left \| v^{\epsilon}(s) \right \|_{L^2(D)}^{2} \bigg] ds \,.
        \end{align*}
  By Gr{\"o}nwall's lemma, we have
      \begin{align} \label{eq:est2.1.3}
        \mathbb{E} \bigg[\sup_{s \in [0,T]} \left \| v^{\epsilon}(s) \right \|_{L^2(D)}^{2} \bigg] & \leq C. 
         \end{align}
      By using \eqref{eq:est2.1.2} and \eqref{eq:est2.1.3}, we have   from \eqref{eq:est}
        \begin{align}
        \mathbb{E} \Big[\int_{0}^{T}  \left \|  v^{\epsilon}(s) \right \|_{W_{0}^{1,p}(D)}^{p} ds \Big] 
        &\leq \left \| u_0 \right \|_{L^2(D)}^{2} + C(M,T)\,  \mathbb{E} \bigg[\sup_{s \in [0,T]}\left \| v^{\epsilon}(s) \right \|_{L^2(D)}^{2} \bigg]  \le C\,. \label{eq:est2.1.4}
\end{align}
Hence the assertion follows from  \eqref{eq:est2.1.3} and \eqref{eq:est2.1.4}.
\end{proof}
A-priori bounds in Lemma \ref{lem:est2.1} is not enough to show convergence of $v^\eps$ to $u_h$. For that, we need more regularity estimate of $v^\epsilon$. 
\begin{lem}\label{lem:est2.2}
Let $v^\epsilon$ be a solution of \eqref{eq:epsilon-main}. Then there exists a constant $C>0$, independent of $\epsilon>0$, such that
   $$  \mathbb{E} \Bigg[ \Big(\int_{0}^{T}  \left \|  v^{\epsilon}(s) \right \|_{W_{0}^{1,p}(D)}^{p} ds \Big)^2\Bigg] \leq C.$$
   \end{lem}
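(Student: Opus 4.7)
The plan is to square the basic energy inequality \eqref{eq:est} derived in the proof of Lemma \ref{lem:est2.1}, take expectation, and estimate each of the three resulting terms. After squaring, the deterministic drift produces a term of order $\bigl(\int_0^T(|h^\epsilon|+\epsilon)\|v^\epsilon\|_{L^2}^2\,ds\bigr)^2$, and the stochastic contribution gives $\sup_{t}\bigl|\int_0^t\langle v^\epsilon,H(v^\epsilon)\,dW\rangle\bigr|^2$. By Cauchy--Schwarz in time (using $\int_0^T|h^\epsilon|^2\,ds\leq M$) and the Burkholder--Davis--Gundy inequality together with assumption \ref{A3}, both quantities naturally reduce to expressions of the form $\mathbb{E}\int_0^T\|v^\epsilon(s)\|_{L^2(D)}^4\,ds$. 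Since Lemma \ref{lem:est2.1} provides only a second-moment bound, the first real task is to secure a uniform fourth-moment estimate.

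To establish $\sup_{\epsilon\in(0,1]}\mathbb{E}\bigl[\sup_{t\in[0,T]}\|v^\epsilon(t)\|_{L^2(D)}^4\bigr]\leq C$, I would apply It\^o's formula to $\phi(X_t)=X_t^2$ with $X_t=\|v^\epsilon(t)\|_{L^2}^2$. The drift inherits the monotone $p$-Laplace bound of Lemma \ref{lem:est2.1} multiplied by $2\|v^\epsilon\|_{L^2}^2$, while the new quadratic-variation correction $4\epsilon^2\langle v^\epsilon,H(v^\epsilon)\rangle_{L^2}^2$ is dominated by $C\epsilon^2\|v^\epsilon\|_{L^2}^4$ via \ref{A3}. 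Taking the supremum and expectation, BDG applied to the new martingale term $4\epsilon\int_0^t\|v^\epsilon\|_{L^2}^2\langle v^\epsilon,H(v^\epsilon)\rangle_{L^2}\,dW$ yields an estimate of the form $C\epsilon\,\mathbb{E}\bigl[\sup_{s\leq t}\|v^\epsilon\|_{L^2}^4\cdot\int_0^t\|v^\epsilon\|_{L^2}^4\,ds\bigr]^{1/2}$; a Young inequality with a sufficiently small parameter $\delta$ absorbs a fraction of $\mathbb{E}\sup\|v^\epsilon\|_{L^2}^4$ into the left-hand side. The $|h^\epsilon|\|v^\epsilon\|_{L^2}^4$ drift piece is treated analogously using the $S_M$ bound. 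Gr\"onwall's lemma, applied to $t\mapsto\mathbb{E}\sup_{s\leq t}\|v^\epsilon(s)\|_{L^2}^4$ after a standard localization via stopping times $\tau_n=\inf\{t:\|v^\epsilon(t)\|_{L^2}>n\}$ to ensure a priori finiteness, then delivers the fourth-moment estimate.

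With this in hand, I return to \eqref{eq:est}: squaring and taking expectation, Cauchy--Schwarz in time gives
\begin{align*}
\mathbb{E}\Bigl(\int_0^T(|h^\epsilon|+\epsilon)\|v^\epsilon\|_{L^2}^2\,ds\Bigr)^2 \leq (M+\epsilon^2T)\,\mathbb{E}\int_0^T\|v^\epsilon\|_{L^2}^4\,ds,
\end{align*}
while BDG together with \ref{A3} produces
\begin{align*}
\epsilon^2\,\mathbb{E}\sup_{t\in[0,T]}\Bigl|\int_0^t\langle v^\epsilon,H(v^\epsilon)\,dW\rangle\Bigr|^2 \leq C\epsilon^2\,\mathbb{E}\int_0^T\|v^\epsilon\|_{L^2}^4\,ds,
\end{align*}
and both are uniformly bounded by the fourth-moment estimate of the previous step. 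The desired inequality follows. The main obstacle is the fourth-moment step: the new martingale carries only a factor of $\epsilon$ (not $\epsilon^2$), so the Young parameter must be chosen small uniformly in $\epsilon\in(0,1]$ and the localization must be threaded carefully through the BDG/Gr\"onwall chain; once that is done, the remainder of the proof is essentially a squared repetition of the computations already performed in Lemma \ref{lem:est2.1}.
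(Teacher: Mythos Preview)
Your proposal is correct, and the overall architecture---establish a uniform fourth-moment bound, then square \eqref{eq:est} and control the drift and martingale contributions via $\mathbb{E}\int_0^T\|v^\epsilon\|_{L^2}^4\,ds$---matches the paper exactly.

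The one genuine difference lies in how the fourth-moment estimate is obtained. You apply a fresh It\^o formula to $X_t^2$ with $X_t=\|v^\epsilon(t)\|_{L^2}^2$, then take the supremum in time, invoke BDG on the new martingale $4\epsilon\int_0^\cdot X_s\langle v^\epsilon,H(v^\epsilon)\rangle\,dW$, absorb via Young's inequality, and close with Gr\"onwall after a localization. This is correct and in fact yields the stronger bound $\mathbb{E}\bigl[\sup_{t}\|v^\epsilon(t)\|_{L^2}^4\bigr]\le C$, which the paper only records afterwards as Corollary \ref{cor:est}. The paper itself takes a shorter path for Lemma \ref{lem:est2.2}: it simply squares the \emph{pathwise} inequality \eqref{eq:est} at a fixed time $t$, takes expectation, and uses It\^o isometry (not BDG) on the squared stochastic integral to obtain $\mathbb{E}\|v^\epsilon(t)\|_{L^2}^4\le C\|u_0\|_{L^2}^4+C\int_0^t\mathbb{E}\|v^\epsilon(s)\|_{L^2}^4\,ds$, whence Gr\"onwall gives only $\sup_t\mathbb{E}\|v^\epsilon(t)\|_{L^2}^4\le C$. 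This weaker estimate already suffices for the final step, and avoids the BDG/absorption/localization machinery entirely. Your route costs more work but delivers Corollary \ref{cor:est} for free; the paper's route is more economical for the lemma as stated.
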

   
\begin{proof}
First we prove that 
\begin{align} \label{eq:est2.2.1}
  \sup_{s \in [0,T]}  \mathbb{E} \Big[ \left \| v^{\epsilon}(s) \right \|_{L^2(D)}^{4} \Big] & \leq C. 
\end{align}

Taking square on the both sides of \eqref{eq:est}, we have
\begin{align}
 \left \| v^{\epsilon}(t) \right \|_{L^2(D)}^{4} & \leq 3 \,\left \| u_0 \right \|_{L^2(D)}^{4} + 3
    \bigg(\int_{0}^{t} C(\left|h^{\epsilon} \right|+\epsilon) \left \| v^{\epsilon}(s) \right \|_{L^2(D)}^{2} ds \bigg)^{2}  \notag \\
    & \qquad +  12 \, \epsilon^{2} \bigg(\int_{0}^{t}   \left<v^{\epsilon}(s), H(v^{\epsilon}(s)) \, dW(s) \right> \bigg)^{2}\,. \label{eq:for-cond-c2-1}
    \end{align}
    Applying H\"{o}lder's inequality and using the fact that $h^{\epsilon} \in \mathcal{A}_M$, we have
    \begin{align}
    \label{eq:est2}
\mathbb{E} \Bigg[ \Big(\int_{0}^{t} \left|h^{\epsilon} \right| \left \| v^{\epsilon}(s) \right \|_{L^2(D)}^{2}\,ds\Big)^2 \Bigg]  & \leq C \,\mathbb{E} \Bigg[\bigg (\int_{0}^{t} \left|h^{\epsilon} \right|^{2} ds \bigg) \bigg (\int_{0}^{t} \left \| v^{\epsilon}(s) \right \|_{L^2(D)}^{4} ds \bigg) \Bigg]  \notag \\
& \leq C_M \,\mathbb{E} \Bigg[\int_{0}^{t} \left \| v^{\epsilon}(s) \right \|_{L^2(D)}^{4} ds \Bigg]. 
\end{align}
We use It{\^o}-isometry together with the assumption \ref{A3} to get
\begin{align} \label{eq:est2.2.2}
    \mathbb{E} \Bigg[ \Big(\int_{0}^{t}   \left<v^{\epsilon}(s), H(v^{\epsilon}(s)) \, dW(s) \right> \Big)^2 \Bigg] \leq C \mathbb{E} \Bigg[\int_{0}^{t} \left \| v^{\epsilon}(s) \right \|_{L^2(D)}^{4} ds \Bigg].
\end{align}
Combining  \eqref{eq:est2} and \eqref{eq:est2.2.2} in \eqref{eq:for-cond-c2-1}, we obtain, for $t\in [0,T]$
\begin{align*}
   \mathbb{E} \Big[  \left \| v^{\epsilon}(t) \right \|_{L^2(D)}^{4} \Big] \leq C\Bigg\{ \,\left \| u_0 \right \|_{L^2(D)}^{4} + \mathbb{E} \Big[\int_{0}^{t} \left \| v^{\epsilon}(s) \right \|_{L^2(D)}^{4} ds \Big]\Bigg\}. 
\end{align*}
Hence, an application of Gr{\"o}nwall's lemma yields the result \eqref{eq:est2.2.1}.
\vspace{0.2cm}

Again from \eqref{eq:est} together with \eqref{eq:est2} and \eqref{eq:est2.2.2}, we have 
\begin{align*}
    \mathbb{E} \Bigg[ \Big(\int_{0}^{T}  \left \|  v^{\epsilon}(s)  \ \right \|_{W_{0}^{1,p}(D)}^{p} ds\Big)^2 \Bigg] & 
    \le C \Bigg\{  \left \| u_0 \right \|_{L^2(D)}^{4} + \int_{0}^{T} \mathbb{E}\Big[ \left \| v^{\epsilon}(s) \right \|_{L^2(D)}^{4}\Big] ds \Bigg\} \notag \\
    & \le  C \Bigg\{  \left \| u_0 \right \|_{L^2(D)}^{4} + \sup_{t \in [0,T]} \mathbb{E} \Big[  \left \| v^{\epsilon}(t) \right \|_{L^2(D)}^{4} \Big]\Bigg\}\le C\,,
\end{align*}
where in the last inequality we have used the uniform estimate \eqref{eq:est2.2.1}. This completes the proof. 
\end{proof}

\begin{cor} \label{cor:est}
 One can use a similar calculation (under the cosmetic change) as done in Lemmas \ref{lem:est2.1} and \ref{lem:est2.2} to conclude 
$$ \sup_{\epsilon >0} \mathbb{E} \bigg[\sup_{s \in [0,T]}  \left \| v^{\epsilon}(s) \right \|_{L^2(D)}^{4} \bigg] \leq C.$$
\end{cor}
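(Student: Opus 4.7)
The plan is to bootstrap the uniform-in-$\epsilon$ estimate \eqref{eq:est2.2.1}, which already controls $\sup_{s\in[0,T]}\mathbb{E}[\|v^{\epsilon}(s)\|_{L^2(D)}^{4}]$, up to the stronger bound in which the supremum sits inside the expectation. The natural starting point is inequality \eqref{eq:est} from the proof of Lemma \ref{lem:est2.1}, i.e.
\begin{align*}
\|v^{\epsilon}(t)\|_{L^2(D)}^{2} + C\int_{0}^{t}\|v^{\epsilon}(s)\|_{W_0^{1,p}(D)}^{p}\,ds \le \|u_0\|_{L^2(D)}^2 + \int_0^t C(|h^\epsilon|+\epsilon)\|v^\epsilon(s)\|_{L^2(D)}^2\,ds + 2\epsilon M_t,
\end{align*}
where $M_t:=\int_0^t\langle v^\epsilon(s),H(v^\epsilon(s))\,dW(s)\rangle$ is a martingale.

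First, I would drop the $W_0^{1,p}$ term, square both sides (producing at most a factor of $3$ and the three summands on the right), and then take $\sup_{t\in[0,T]}$ followed by expectation. The deterministic drift is handled by Cauchy--Schwarz in time exactly as in \eqref{eq:est2}: since $h^\epsilon\in\mathcal{A}_M$ and $\epsilon\le 1$,
\begin{align*}
\mathbb{E}\Big(\int_0^T C(|h^\epsilon|+\epsilon)\|v^\epsilon(s)\|_{L^2(D)}^2\,ds\Big)^2 \le C_{M,T}\,\mathbb{E}\int_0^T \|v^\epsilon(s)\|_{L^2(D)}^{4}\,ds,
\end{align*}
which, by Fubini and the already-established bound \eqref{eq:est2.2.1}, is finite uniformly in $\epsilon$. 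The extra ingredient, compared to Lemma \ref{lem:est2.2}, is that the martingale term must be treated with the Burkholder--Davis--Gundy inequality (second-moment form) so as to commute the $\sup_{t}$ with $\mathbb{E}$:
\begin{align*}
\mathbb{E}\sup_{t\in[0,T]}|M_t|^2 \le C\,\mathbb{E}\int_0^T \|v^\epsilon(s)\|_{L^2(D)}^{2}\|H(v^\epsilon(s))\|_{L^2(D)}^{2}\,ds \le C\,\mathbb{E}\int_0^T \|v^\epsilon(s)\|_{L^2(D)}^{4}\,ds,
\end{align*}
where in the last step I use \ref{A3}. Again by Fubini and \eqref{eq:est2.2.1}, this is bounded uniformly in $\epsilon$.

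Combining the three terms, I obtain
\begin{align*}
\mathbb{E}\Big[\sup_{s\in[0,T]}\|v^\epsilon(s)\|_{L^2(D)}^{4}\Big] \le 3\|u_0\|_{L^2(D)}^{4} + C\int_0^T \sup_{r\in[0,s]}\mathbb{E}\|v^\epsilon(r)\|_{L^2(D)}^{4}\,ds,
\end{align*}
and the right-hand side is $\le 3\|u_0\|_{L^2(D)}^{4}+CT\,\sup_{s\in[0,T]}\mathbb{E}\|v^\epsilon(s)\|_{L^2(D)}^{4}\le C$ thanks to \eqref{eq:est2.2.1}; alternatively, one could close the argument directly by Gr\"onwall's lemma applied to $\phi(t):=\mathbb{E}\sup_{s\in[0,t]}\|v^\epsilon(s)\|_{L^2(D)}^{4}$. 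The only mildly delicate point is ensuring the $\epsilon$-uniformity of every constant, which is automatic from the standing assumption $\epsilon\le 1$ together with $h^\epsilon\in\mathcal{A}_M$. No new compactness or identification of limits is required, so the step amounts to a routine (but careful) BDG-plus-Gr\"onwall upgrade of Lemma \ref{lem:est2.2}.
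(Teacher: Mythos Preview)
Your proposal is correct and is precisely the ``cosmetic change'' the paper is gesturing at: you square \eqref{eq:est} as in Lemma~\ref{lem:est2.2}, but replace the It\^o isometry there by the BDG inequality (as in Lemma~\ref{lem:est2.1}) so that the supremum in time can be pulled inside the expectation, and then close with \eqref{eq:est2.2.1} or Gr\"onwall. Since the paper gives no explicit proof beyond pointing to Lemmas~\ref{lem:est2.1} and~\ref{lem:est2.2}, there is nothing further to compare.
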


\begin{lem} \label{lem:cpt2}
The following estimation holds: for any $\alpha \in (0,\frac{1}{2p})$,
$$\sup_{\epsilon} \mathbb{E}\Big[ \left\| v^{\epsilon}\right\|_{W^{\alpha,2}([0,T];{W^{-1,p'}(D)})}\Big] < \infty. $$
\end{lem}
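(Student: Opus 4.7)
The plan is to parallel the proof of Lemma~\ref{lem:cpt0}, accounting for one new term---the Itô stochastic integral. By Jensen's inequality it suffices to bound $\sup_{\epsilon}\mathbb{E}[\|v^{\epsilon}\|^{2}_{W^{\alpha,2}([0,T];W^{-1,p'}(D))}]$. The $L^{2}$-in-time part is immediate from $W^{1,p}_{0}(D)\hookrightarrow W^{-1,p'}(D)$ together with Lemma~\ref{lem:est2.1}, so the focus is the Gagliardo seminorm. Writing the integrated form of \eqref{eq:epsilon-main} as
\begin{align*}
v^{\epsilon}(t)=u_{0}+\mathcal{K}_{1}^{\epsilon}(t)+\mathcal{K}_{2}^{\epsilon}(t)+\mathcal{K}_{3}^{\epsilon}(t)+\mathcal{K}_{4}^{\epsilon}(t),
\end{align*}
where $\mathcal{K}_{1}^{\epsilon},\mathcal{K}_{2}^{\epsilon},\mathcal{K}_{3}^{\epsilon}$ are the time integrals of $\Delta_{p}v^{\epsilon}$, $\mathrm{div}_{x}\vec{f}(v^{\epsilon})$, $H(v^{\epsilon})h^{\epsilon}$, and $\mathcal{K}_{4}^{\epsilon}(t):=\epsilon\int_{0}^{t}H(v^{\epsilon})\,dW$, the task reduces to estimating $\mathbb{E}\int_{0}^{T}\!\!\int_{0}^{T}\|\mathcal{K}_{i}^{\epsilon}(t)-\mathcal{K}_{i}^{\epsilon}(s)\|^{2}_{W^{-1,p'}}|t-s|^{-(1+2\alpha)}\,dt\,ds$ separately for $i=1,2,3,4$.

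For $\mathcal{K}_{1}^{\epsilon}$ and $\mathcal{K}_{2}^{\epsilon}$ the pathwise inequalities used in Lemma~\ref{lem:cpt0} transfer directly; once expectation is taken, Lemma~\ref{lem:est2.2} supplies the required second moment of $\int_{0}^{T}\|v^{\epsilon}\|^{p}_{W^{1,p}_{0}(D)}\,dr$ for the $p$-Laplace contribution, while Lemma~\ref{lem:est2.1} (or Corollary~\ref{cor:est}) handles the flux one. For $\mathcal{K}_{3}^{\epsilon}$, I would use the embedding $L^{2}(D)\hookrightarrow W^{-1,p'}(D)$, Cauchy--Schwarz, assumption~\ref{A3} and $h^{\epsilon}\in S_{M}$ a.s., combined with the bound $\mathbb{E}\sup_{r}\|v^{\epsilon}(r)\|^{2}_{L^{2}}\le C$ from Lemma~\ref{lem:est2.1}, to obtain $\mathbb{E}\|\mathcal{K}_{3}^{\epsilon}(t)-\mathcal{K}_{3}^{\epsilon}(s)\|^{2}_{W^{-1,p'}}\le C|t-s|$, which is harmless for any $\alpha<1/2$.

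The genuinely new estimate concerns $\mathcal{K}_{4}^{\epsilon}$, which I would control via the Itô isometry and assumption~\ref{A3}:
\begin{align*}
\mathbb{E}\|\mathcal{K}_{4}^{\epsilon}(t)-\mathcal{K}_{4}^{\epsilon}(s)\|^{2}_{L^{2}(D)}=\epsilon^{2}\,\mathbb{E}\int_{s}^{t}\|H(v^{\epsilon}(r))\|^{2}_{L^{2}(D)}\,dr\le C\epsilon^{2}|t-s|\sup_{r}\mathbb{E}\|v^{\epsilon}(r)\|^{2}_{L^{2}(D)}\le C|t-s|
\end{align*}
uniformly in $\epsilon\in(0,1]$; combined with $L^{2}(D)\hookrightarrow W^{-1,p'}(D)$ the corresponding Gagliardo double integral is finite for every $\alpha<1/2$. (Higher moments, should they be needed elsewhere, are available through the Burkholder--Davis--Gundy inequality.)

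Summing the four contributions yields the claim in the range $\alpha\in(0,\tfrac{1}{2p})$, where the binding exponent is forced by the $p$-Laplace term $\mathcal{K}_{1}^{\epsilon}$. Accordingly, the chief obstacle is not the stochastic integral---tamed cleanly by Itô isometry---but the nonlinear $p$-Laplacian, whose Hölder/Jensen bookkeeping after taking expectation relies crucially on the enhanced second-moment estimate of Lemma~\ref{lem:est2.2}; the $L^{2}$-type a-priori bound of Lemma~\ref{lem:est2.1} alone would not close the argument.
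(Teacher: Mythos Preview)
Your proposal is correct and follows essentially the same approach as the paper: the same decomposition into the $p$-Laplace, flux, drift, and stochastic-integral pieces, with the same tools applied to each (H\"older/Jensen plus Lemma~\ref{lem:est2.2} for the $p$-Laplace term, Lemma~\ref{lem:est2.1} for the flux and drift terms, and the It\^o isometry for the stochastic integral). Your observation that the binding constraint on $\alpha$ comes from the $p$-Laplace term and that Lemma~\ref{lem:est2.2} (not merely Lemma~\ref{lem:est2.1}) is needed there matches the paper's argument exactly.
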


\begin{proof} Since $v^\epsilon$ is the unique solution of \eqref{eq:epsilon-main}, we have $\mathbb{P}$-a.s., and for all $t\in [0,T]$
\begin{align} \label{eq:cpt2.0}
    v^{\epsilon}(t) &= u_0 + \int_{0}^{t} {\rm div}_x ( |\nabla v^{\epsilon}|^{p-2}\nabla v^{\epsilon}) \, ds
    + \int_{0}^{t} {\rm div}_x \vec{f}(v^{\epsilon}) \, ds + \int_{0}^{t} H(v^{\epsilon})h^\epsilon(t) \, ds + \epsilon \int_{0}^{t} H(v^{\epsilon}) \, dW(s)  \notag\\
    &= \mathcal{J}^\epsilon_0+ \mathcal{J}_{1}^{\epsilon}(t) + \mathcal{J}_{2}^{\epsilon}(t) + \mathcal{J}_{3}^{\epsilon}(t) +\mathcal{J}_{4}^{\epsilon}(t). 
\end{align}

By Sobolev embedding $W_{0}^{1,p}(D) \hookrightarrow W^{-1,p'}(D)$ and Lemma \ref{lem:est2.1} , we get
\[ \mathbb{E} \Big[ \int_{0}^{T} \left \|v^{\epsilon}  \right \|_{W^{-1,p'}(D)}^{p} \,dt\Big] \leq \mathbb{E}\Big[ \int_{0}^{T} \left \|v^{\epsilon}  \right \|_{W_{0}^{1,p}(D)}^{p} \,dt \Big]\leq C. \]
Therefore, to complete the proof, we need to show that for any $\alpha \in (0,\frac{1}{2p})$,
\begin{align}
\mathbb{E}\Big[  \int_{0}^{T}\int_{0}^{T} \frac{\left \|\mathcal{J}_{i}^{\epsilon}(t) - \mathcal{J}_{i}^{\epsilon}(s)   \right \|_{W^{-1,p'}(D)}^{2}}{\left | t-s \right |^{1+ 2 \alpha}} \, dt \, ds\Big] \leq C \quad (0\le i\le 4)\,. \label{inq:frac-Sov-main-}
\end{align}

Since $\mathcal{J}_{0}^\epsilon$ is independent of time, \eqref{inq:frac-Sov-main-}  is satisfied by $\mathcal{J}_{0}^\epsilon$ for any  $\alpha \in (0,1)$.

W.L.O.G.\, we assume that $s < t$. By employing Jensen's inequality and \cite[Equation 4.1.14]{claud}, we have 
\begin{align}
  &  \mathbb{E}\Big[ \left \|\mathcal{J}_{1}^{\epsilon}(t) - \mathcal{J}_{1}^{\epsilon}(s)   \right \|_{W^{-1,p'}(D)}^{2}\Big]  \notag \\
   & \leq \mathbb{E} \left[ \left \| \int_{s}^{t} \Delta_p v^{\epsilon} \,dr  \right \|_{W^{-1,p'}(D)}^{2} \right]\leq \mathbb{E} \left[ \bigg(\int_{s}^{t} \left \| \Delta_p v^{\epsilon}  \right \|_{W^{-1,p'}(D)} \,dr\bigg)^{2}\right]  \notag \\
    & \leq \mathbb{E}\left[ \bigg(\int_{s}^{t} \left \|v^{\epsilon}  \right \|_{W_{0}^{1,p}(D)}^{p-1} \,dr\bigg)^2\right] \leq C (t-s)^{\frac{2}{p}} \mathbb{E} \left[\bigg(\int_{0}^{T} \left \|v^{\epsilon}  \right \|_{W_{0}^{1,p}(D)}^{p} \,dr\bigg)^{\frac{2(p-1)}{p}}\right] \notag \\
    & \leq C (t-s)^{\frac{2}{p}} \mathbb{E} \Bigg[1 +\bigg( \int_{0}^{T} \left \|v^{\epsilon}  \right \|_{W_{0}^{1,p}(D)}^{p} \,dr\bigg)^{2} \Bigg] \notag \\
    & \leq C (t-s)^{\frac{2}{p}}  + C (t-s)^{\frac{2}{p}} \mathbb{E}\left[ \bigg( \int_{0}^{T} \left \|v^{\epsilon}  \right \|_{W_{0}^{1,p}(D)}^{p} \,dr\bigg)^{2}\right]. \notag
\end{align}
Thanks to Lemma \ref{lem:est2.2}, we get
\begin{align}
   \mathbb{E} \left[\int_{0}^{T}\int_{0}^{T} \frac{\left \|\mathcal{J}_{1}^{\epsilon}(t) - \mathcal{J}_{1}^{\epsilon}(s)   \right \|_{W^{-1,p'}(D)}^{2}}{\left | t-s \right |^{1+ 2 \alpha}} \, dt \, ds\right] \leq C, \quad \, \forall \, \alpha \in (0,\frac{1}{2p}).  \notag
\end{align}

By using  the Sobolev embedding $L^2(D) \hookrightarrow W^{-1,p'}(D)$, Lemma \ref{lem:est2.1}, the assumption \ref{A2}  and Jensen's inequality, we have
\begin{align}
   \mathbb{E}\Big[ \left \|\mathcal{J}_{2}^{\epsilon}(t) - \mathcal{J}_{2}^{\epsilon}(s)   \right \|_{W^{-1,p'}(D)}^{2}\Big] & \leq  \mathbb{E} \left[ \left \| \int_{s}^{t}  {\rm div}_x \vec{f}(v^{\epsilon})  \,dr \right \|_{W^{-1,p'}(D)}^{2}\right] \leq  C \mathbb{E}\left[ \bigg(\int_{s}^{t} \left \|  v^{\epsilon}  \right \|_{L^{2}(D)} \,dr \bigg)^{2} \right] \notag \\
    & \leq C (t-s)^2 \, \mathbb{E} \Big[ \sup_{0\leq t\leq T}\left \| v^{\epsilon}(t) \right \|_{L^2(D)}^{2} \Big], \notag
\end{align}
and therefore,  for all $\alpha \in (0,\frac{1}{p})$ we have
\begin{align*}
    \mathbb{E} \left[\int_{0}^{T}\int_{0}^{T} \frac{\left \|\mathcal{J}_{2}^{\epsilon}(t) - \mathcal{J}_{2}^{\epsilon}(s)   \right \|_{W^{-1,p'}(D)}^{2}}{\left | t-s \right |^{1+2 \alpha}} \, dt \, ds\right] \leq C. 
\end{align*}
For $\mathcal{J}_{3}^{\epsilon}$, we estimate as follows. In view of the assumption \ref{A3}, Jensen's inequality and Lemma \ref{lem:est2.1}, 
\begin{align}
    \mathbb{E}\left[ \left \|\mathcal{J}_{3}^{\epsilon}(t) - \mathcal{J}_{3}^{\epsilon}(s)   \right \|_{L^{2}(D)}^{2}\right] & \leq  C (t-s) \mathbb{E} \left[\int_{s}^{t}  \left \| v^{\epsilon} \right \|_{L^2(D)}^{2} \left | h^{\epsilon}(r) \right |^{2} \,dr \right] \notag \\
    & \leq C (t-s) \, \mathbb{E} \bigg[ \sup_{0\leq t\leq T}\left \| v^{\epsilon}(t) \right \|_{L^2(D)}^{2} \int_{0}^{T}  \left | h^{\epsilon}(r) \right |^{2} \,dr \bigg] \notag \\
    & \leq C (t-s) \, \mathbb{E} \Big[\sup_{0\leq t\leq T}\left \| v^{\epsilon}(t) \right \|_{L^2(D)}^{2} \Big], \notag
\end{align}
where in the last inequality we have used that $h^\epsilon \in \mathcal{A}_M$. Thus, since $L^2(D) \hookrightarrow W^{-1,p'}(D)$, \eqref{inq:frac-Sov-main-} is satisfied by $\mathcal{J}_{3}^{\epsilon}$ for $\alpha \in (0,\frac{1}{p})$. 
\vspace{0.1cm}

It{\^o}-isometry together with the assumption \ref{A3} yields that 
\begin{align}
    \mathbb{E}\left[ \left \|\mathcal{J}_{4}^{\epsilon}(t) - \mathcal{J}_{4}^{\epsilon}(s)   \right \|_{L^{2}(D)}^{2}\right]  \leq C \mathbb{E} \bigg[ \int_{s}^{t}  \left \| v^{\epsilon}(r) \right \|_{L^2(D)}^{2} \,dr \bigg] \le C (t-s)\, \mathbb{E} \Big[\sup_{0\leq t\leq T}\left \| v^{\epsilon}(t) \right \|_{L^2(D)}^{2} \Big]. \notag
\end{align}
Hence,  for any $\alpha \in (0,\frac{1}{p})$ the estimation \eqref{inq:frac-Sov-main-} holds true for $\mathcal{J}_{4}^{\epsilon}$. 
\end{proof}

\begin{cor} The sequence 
$\left \{ \mathcal{L}(v^{\epsilon}) \right \}$ is tight on $L^{2}([0,T];L^{2}(D))$.
\end{cor}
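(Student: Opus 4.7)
The plan is to deduce tightness from the uniform moment bounds in Lemmas \ref{lem:est2.1} and \ref{lem:cpt2} together with the compactness criterion of Lemma \ref{lem:cpt}. Specifically, I would apply Lemma \ref{lem:cpt} with the triple
$$W_0^{1,p}(D) \hookrightarrow L^2(D) \hookrightarrow W^{-1,p'}(D),$$
where the first embedding is compact (by Rellich--Kondrachov, using that $p>2$ and $D$ is a bounded Lipschitz domain). Taking $q=2$ and $\alpha \in (0,\frac{1}{2p})$, the lemma yields that
$$L^p([0,T];W_0^{1,p}(D)) \cap W^{\alpha,2}([0,T];W^{-1,p'}(D))$$
embeds compactly into $L^2([0,T];L^2(D))$. (Strictly speaking Lemma \ref{lem:cpt} is stated for $L^q([0,T];\mathbb{X})$; the stronger space $L^p \cap W^{\alpha,2}$ with $p\ge 2$ embeds into $L^2([0,T];\mathbb{X})$, so the compact embedding into $L^2([0,T];\mathbb{Y})$ follows.)

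Next, for each $R>0$ I would define the set
$$K_R := \Big\{u \in L^p([0,T];W_0^{1,p}(D)) \cap W^{\alpha,2}([0,T];W^{-1,p'}(D)) : \int_0^T \|u(s)\|_{W_0^{1,p}(D)}^p\,ds + \|u\|_{W^{\alpha,2}([0,T];W^{-1,p'}(D))} \le R\Big\}.$$
By the compact embedding above, the closure $\overline{K_R}$ is a compact subset of $L^2([0,T];L^2(D))$. It then remains to estimate $\mathbb{P}(v^\epsilon \notin K_R)$ uniformly in $\epsilon$ and send $R\to\infty$.

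By Markov's inequality,
$$\mathbb{P}(v^\epsilon \notin K_R) \le \frac{1}{R}\left(\mathbb{E}\Big[\int_0^T \|v^\epsilon(s)\|_{W_0^{1,p}(D)}^p\,ds\Big] + \mathbb{E}\big[\|v^\epsilon\|_{W^{\alpha,2}([0,T];W^{-1,p'}(D))}\big]\right).$$
Lemma \ref{lem:est2.1} bounds the first expectation uniformly in $\epsilon \in (0,1]$, while Lemma \ref{lem:cpt2} bounds the second one uniformly in $\epsilon$. Hence
$$\sup_{\epsilon \in (0,1]} \mathbb{P}(v^\epsilon \notin K_R) \le \frac{C}{R} \xrightarrow[R\to\infty]{} 0,$$
which is exactly the tightness of $\{\mathcal{L}(v^\epsilon)\}$ on $L^2([0,T];L^2(D))$.

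There is no real obstacle here; the only subtle point is matching the function spaces in Lemma \ref{lem:cpt} with the bounds at hand, i.e.\ interpolating $L^p([0,T];\mathbb{X}) \subset L^2([0,T];\mathbb{X})$ since $p>2$ so that the compactness criterion applies with $q=2$ (consistent with the $W^{\alpha,2}$-estimate of Lemma \ref{lem:cpt2}). Once this is in place the rest is a routine Markov-inequality argument.
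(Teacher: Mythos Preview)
Your proposal is correct and follows essentially the same route as the paper: apply Lemma \ref{lem:cpt} with $q=2$ to the triple $W_0^{1,p}(D)\hookrightarrow L^2(D)\hookrightarrow W^{-1,p'}(D)$, define compact sublevel sets, and use Markov's inequality together with Lemmas \ref{lem:est2.1} and \ref{lem:cpt2}. The only cosmetic difference is that the paper works directly with $\|v\|_{L^2([0,T];W_0^{1,p}(D))}$ in its compact set $M_N$, whereas you use $\int_0^T\|u(s)\|_{W_0^{1,p}(D)}^p\,ds$ and note the inclusion $L^p\subset L^2$; both lead to the same conclusion.
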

\begin{proof}
  By Lemma \ref{lem:cpt}, we get $ L^{2}([0,T];W_{0}^{1,p}(D)) \, \cap \, W^{\alpha,2}([0,T];{W^{-1,p'}(D)})$ is compactly embedded in $L^{2}([0,T];L^{2}(D))$. So for any $N > 0$, the set
  $$M_N = \left \{ v \in L^{2}([0,T];L^{2}(D)) : \left \| v \right \|_{L^{2}([0,T];W_{0}^{1,p}(D))} + \left \|v  \right \|_{W^{\alpha,2}([0,T];{W^{-1,p'}(D)})} \leq N \right \} $$
  is a compact subset of $L^{2}([0,T];L^{2}(D))$. Observe that, thanks to Markov's inequality and Lemmas \ref{lem:est2.1} and  \ref{lem:cpt2}
  \begin{align}
      & \underset{N \rightarrow \infty}\lim \, \underset{\epsilon >0}\sup \ \mathbb{P}( v^{\epsilon} \notin M_N) \notag \\
      = & \underset{N \rightarrow \infty}\lim \, \underset{\epsilon >0}\sup \ \mathbb{P} \big(\left \| v^{\epsilon} \right \|_{L^{2}([0,T];W_{0}^{1,p}(D))} + \left \|v^{\epsilon}  \right \|_{W^{\alpha,2}([0,T];{W^{-1,p'}(D)})} > N \big) \notag \\
       = & \underset{N \rightarrow \infty}\lim \, \frac{1}{N} \ \underset{\epsilon>0}\sup \ \mathbb{E} \left[(\left \| v^{\epsilon} \right \|_{L^{2}([0,T];W_{0}^{1,p}(D))} + \left \|v^{\epsilon}  \right \|_{W^{\alpha,2}([0,T];{W^{-1,p'}(D)})}\right]=0\,. \notag
  \end{align}
  Therefore, $\left \{ \mathcal{L}(v^{\epsilon}) \right \}$ is tight on $L^{2}([0,T];L^{2}(D))$.
\end{proof}

Since $S_M$ is a Polish space, we apply Prokhorov compactness theorem to the family of laws $\left \{ \mathcal{L}(v^{\epsilon}) \right \}$ and the modified version of  Skorokhod representation theorem \cite[Theorem $C.1$]{Hausenblus-2018} to have  existence of a new probability space $(\tilde{\Omega},\tilde{\mathcal{F}},\tilde{\mathbb{P}})$, a subsequence of $\{\epsilon\}$, still denoted by $\left \{\epsilon  \right \}$, and random variables $(\tilde{v}^{\epsilon}, \tilde{h}^{\epsilon},\tilde{W}^{\epsilon})$ and $( \tilde{v}, h, \tilde{W})$  taking values in $L^{2}([0,T];L^{2}(D)) \times S_M \times \mathcal{C}([0,T]; \mathbb{R})$ such that
\begin{itemize}
    \item[i)]  $\mathcal{L}(\tilde{v}^{\epsilon}, \tilde{h}^{\epsilon},\tilde{W}^{\epsilon}) = \mathcal{L}({v}^{\epsilon},{h}^{\epsilon}, W)$ for all $\epsilon > 0$, 
    \item[ii)] $(\tilde{v}^{\epsilon}, \tilde{h}^{\epsilon},\tilde{W}^{\epsilon}) \rightarrow (\tilde{v}, h, \tilde{W})$ in $L^{2}([0,T];L^{2}(D)) \times S_M \times \mathcal{C}([0,T]; \mathbb{R})$\, $\tilde{\mathbb{P}}$-a.s. \,as $\epsilon \rightarrow 0$,
    \item[iii)] $\tilde{W}^{\epsilon}(\tilde{\omega}) = \tilde{W}(\tilde{\omega}) $ for all $\tilde{\omega} \in \tilde{\Omega}$.
\end{itemize}
Moreover, $\tilde{W}^{\epsilon}$ and $W^{\epsilon}$ are one dimensional Brownian motion over the stochastic basis 
$\big(\tilde{\Omega},\tilde{\mathcal{F}},\tilde{\mathbb{P}}, \{\tilde{\mathcal{F}}_t\}\big) $, where $ \{\tilde{\mathcal{F}}_t\}$ is the natural filtration of 
$(\tilde{v}^{\epsilon}, \tilde{h}^{\epsilon},\tilde{W}^{\epsilon}, \tilde{v}, h, \tilde{W})$, and $\tilde{v}^{\epsilon}$ is the solution to equation \eqref{eq:epsilon-main} for given $\big(\tilde{\Omega},\tilde{\mathcal{F}},\tilde{\mathbb{P}}, \{\tilde{\mathcal{F}}_t\}, \tilde{W}^{\epsilon}\big) $ corresponding to $ \tilde{h}^{\epsilon}$. Consequently,  $\{\tilde{v}^{\epsilon}\}$ satisfies the uniform estimates as mentioned in Lemma \ref{lem:est2.1}-\ref{lem:est2.2}, Corollary \ref{cor:est} and Lemma \ref{lem:cpt2}. Furthermore, one can use a similar argument as done in \cite[Lemma 3.8]{maj}(see \cite[Lemma 3.11]{majee}) to conclude the following 
\begin{align} \label{eq:oldres1}
    \tilde{v}^{\epsilon} \rightarrow \tilde{v}  \ in \ L^2 \big(\tilde{\Omega};L^{2}([0,T];L^{2}(D))\big) \ as \ \epsilon \rightarrow 0.
\end{align}
In view of the assumption \ref{A2}, \eqref{eq:oldres1} and since $\tilde{v}^\epsilon$ satisfies uniform estimates given in Lemma \ref{lem:est2.1}, there exists $ \hat{G} \in L^{p'}(\tilde{\Omega}\times D_{(0,T)})^{d}$ such that for any $\phi \in W_0^{1,p}(D)$

\begin{align}  \label{conv:c1-final-1}
\begin{cases}
\displaystyle \underset{\epsilon \rightarrow 0}\lim \ \tilde{\mathbb{E}}\left[ \int_{0}^{T}\Big|\int_0^t  \left<  {\rm div}_x \big(\vec{f}(\tilde{v}^{\epsilon}) -\vec{f}({\tilde{v}(s)})\big), \phi \right> \, ds\Big|\,dt\right] = 0\,, \\
 \displaystyle \underset{\epsilon \rightarrow 0}\lim \ \tilde{\mathbb{E}} \left[ \int_{0}^{T}\Big|\int_0^t  \left<  {\rm div}_x (|\nabla \tilde{v}^{\epsilon}(s)|^{p-2}\nabla \tilde{v}^{\epsilon}(s) - \hat{G}(s)), \phi \right> \, ds\Big|\,dt\right] = 0.
\end{cases}
\end{align}
Like in the estimation if $I_1(T)$ in \eqref{eq:cgs2.1}, we see that
\begin{align*}
& \tilde{\mathbb{E}}\left[ \int_0^T\Big|  \int_{0}^{t} \left< \big(H(\tilde{v}^\epsilon) - H(\tilde{v}(s)) \big) \tilde{h}^\epsilon(s) , \phi \right> \, ds\Big|\,dt\right] \notag \\
& \le CT \left\{ \tilde{\mathbb{E}}\Big[ \int_0^T\|\tilde{v}^\epsilon-\tilde{v}\|_{L^2(D)}^2\,dt\Big]\right\}^\frac{1}{2}\left\{ \tilde{\mathbb{E}}\Big[ \int_0^T|\tilde{h}^\epsilon(t)|^2\, dt\Big]\right\}^\frac{1}{2} \goto 0 \quad (\text{by \eqref{eq:oldres1}})\,.
\end{align*}
Observe that $\tilde{\mathbb{P}}$-a.s, $\displaystyle \tilde{\psi}(t):= \int_D H(\tilde{v}(t,x))\phi(x)\,dx$ lies in $L^2(0,T;\R)$. Since $\tilde{\mathbb{P}}$-a.s,
$h_{n} \rightharpoonup h$ in $L^2([0,T]; \mathbb{R})$, one then easily get that 
 $\displaystyle   \int_0^T (\tilde{h}^\epsilon(s)-h(s))\tilde{\psi}(s)\,ds  \rightarrow 0$. Since $\displaystyle \tilde{\mathbb{E}}\left[ \underset{0\leq t\leq T}\sup \|\tilde{v}(t)\|_{L^2(D)}^2 \right] < + \infty$, and 
 $\tilde{h}^\epsilon, h \in \mathcal{A}_M$, by Vitali convergence theorem, we get
 \begin{align*}
 \lim_{\epsilon \goto 0} \tilde{\mathbb{E}}\left[ \int_0^T\Big|  \int_{0}^{t} \left< \big(\tilde{h}^\epsilon(s) - h(s) \big) H(\tilde{v}(s,x)) , \phi \right> \, ds\Big|\,dt\right] =0\,.
 \end{align*}
As a result, we have the following convergence
\begin{align}
\underset{\epsilon \rightarrow 0}\lim \ \tilde{\mathbb{E}}\left[ \int_{0}^{T} \Big| \int_0^t \left<H(\tilde{v}^{\epsilon}(s))\tilde{h}^{\epsilon}(s) - H(\tilde{v}(s))h(s) , \phi \right> \, ds\Big|\,dt\right] = 0\,. \label{conv:c1-final-2}
\end{align}
By using BDG inequality and  the first uniform estimate in Lemma \ref{lem:est2.1}, one can easily see that
\begin{align}
& \epsilon \tilde{\mathbb{E}} \left[ \int_0^T\Big| \big\langle \int_0^t H(\tilde{v}^\epsilon(s))\,d\tilde{W}^\epsilon(s), \phi \big\rangle\Big|\,dt\right]\notag \\
& \le \epsilon  \tilde{\mathbb{E}} \left[ \int_0^T\left\| \int_0^t H(\tilde{v}^\epsilon(s))\,d\tilde{W}^\epsilon(s)\right\|_{L^2(D)}\|\phi\|_{L^2(D)}\,dt\right] \notag \\
& \le  \epsilon\|\phi\|_{L^2(D)}  \tilde{\mathbb{E}} \left[ \int_0^T\left( \int_0^T \|H(\tilde{v}^\epsilon(s))\|_{L^2(D)}^2\,ds\right)^\frac{1}{2}\,dt\right] \notag \\
& \le C \epsilon \|\phi\|_{L^2(D)}  T\, \tilde{\mathbb{E}} \left[\sup_{0\le s\le T}\|\tilde{v}^\epsilon(s)\|_{L^2(D)} \right] \goto 0 \quad (\text{as $\epsilon \goto 0$})\,. 
 \label{conv:c1-final-3}
\end{align}
One can use the convergence results in \eqref{eq:oldres1}-\eqref{conv:c1-final-3} and pass to the  limit in the equation satisfied by $\tilde{v}^{\epsilon}$ as $\epsilon \rightarrow 0$ to conclude that $\tilde{v} $  satisfies the weak form
\begin{align}
\displaystyle \left<\tilde{v}(t),\phi \right> = \left<u_{0}, \phi \right> + \int_{0}^{t} \left<  {\rm div}_x (\hat{G}(s)+\vec{f}({\tilde{v}(s)})), \phi \right> \, ds + \int_{0}^{t} \left< H(\tilde{v}(s)h(s) , \phi \right> \, ds \quad \forall~\phi \in  W_0^{1,p}(D)\,. \label{eq:final-cond-c1-skeleton}
\end{align}
Once we show that $\tilde{v}$ is indeed a solution of the the skeleton equation \eqref{eq:skeleton}, by uniqueness of skeleton equation, we get $\tilde{v} = u_h$. Since $\tilde{v}^{\epsilon}$ has the same distribution with $v^{\epsilon}$,  to prove  $v^{\epsilon} \rightarrow u_h $ in distribution as $\epsilon \rightarrow 0$, it is enough to prove that $\tilde{v}^{\epsilon} \rightarrow \tilde{v} $ in distribution on $\mathcal{Z}$ as $\epsilon \rightarrow 0$. 
\vspace{.1cm}

\noindent{\bf  Identification $\hat{G}$:} To show that $\tilde{v}$ is a solution of \eqref{eq:skeleton}, in view of  \eqref{eq:final-cond-c1-skeleton}, it is needed to show that  $\hat{G} = |\nabla \tilde{v}(t)|^{p-2}\nabla \tilde{v}(t)$ in $L^{p'}(\tilde{\Omega}\times D_{(0,T)})^{d}$. Indeed, by applying chain-rule on $\tilde{v}$ in  \eqref{eq:final-cond-c1-skeleton}, and It{\^o} formula to the functional $\|\tilde{v}^\epsilon(t)\|_{L^2(D)}^2$ in the equation satisfied by $\tilde{v}^\epsilon$, and then subtracting these resulting equations from each other, we have, after taking expectation
\begin{align}
     \tilde{\mathbb{E}} \Big[& \left \|  \tilde{v}^{\epsilon}(T) \right \|_{L^2(D)}^{2} - \left \|\tilde{v}(T) \right \|_{L^2(D)}^{2} \Big] + 2\tilde{\mathbb{E}} \Bigg[\int_{0}^{T} \int_{D}  |\nabla  \tilde{v}^{\epsilon}(t)|^{p-2}\nabla  \tilde{v}^{\epsilon}(t) \cdot \nabla  \tilde{v}^{\epsilon}(t) \, dx \, dt \Bigg]\notag  \\
    & \leq 2\tilde{\mathbb{E}} \Bigg[\int_{0}^{T} \int_{D}   \hat{G} \cdot \nabla {\tilde{v}(t)} \, dx \, dt \Bigg]+ \epsilon \ \tilde{\mathbb{E}} \Bigg[\int_{0}^{T} \left \|  H( \tilde{v}^{\epsilon}) \right \|_{L^2(D)}^{2} dt \Bigg]  \notag \\
    & + 2\tilde{\mathbb{E}} \Bigg[\int_{0}^{T} \left< H( v^{\epsilon})h^{\epsilon}(t), v^{\epsilon}(t)\right>\, dt - \int_{0}^{T} \left<H(\tilde{v}(t))h(t),\tilde{v}(t) \right> \, dt \Bigg]\,. \label{eq:conv-limit-c1}
\end{align}
Using Corollary \ref{cor:est}, one can easily show that $\tilde{\mathbb{P}}$-a.s., 
$$ \hat{\psi}(t):= \int_D H(\tilde{v}(t,x)) \tilde{v}(t,x)\,dx \in L^2(0,T;\R).$$
Since $\tilde{\mathbb{E}}\left[ \underset{0\leq t\leq T}\sup \| \tilde{v}(t)\|_{L^2(D)}^4\right]< + \infty$ and 
$\tilde{\mathbb{P}}$-a.s.,  $\tilde{h}^\epsilon \rightharpoonup h$ in $L^2(0,T;\R)$, by Vitali convergence theorem, we conclude that 
\begin{align}
    \lim_{\epsilon \rightarrow 0} \tilde{\mathbb{E}} \left | \int_{0}^{T} \left< H(\tilde{v}(t)) \big(\tilde{h}^{\epsilon}(t) - h(t) \big) , \tilde{v}(t) \right> \, dt  \right | = 0. \label{conv:weak-c1-drift-1}
\end{align}
Similar to \eqref{eq:lem4.4.2} and \eqref{eq:lem4.4.3}, one can use \eqref{eq:oldres1} together with uniform estimate in Lemma \ref{lem:est2.1} for $\tilde{v}$ and
$\tilde{v}^\epsilon$ to arrive at
\begin{align} \label{conv:weak-c1-drift-2}
\begin{cases}
\displaystyle \lim_{\epsilon \goto 0} \tilde{\mathbb{E}} \left[ \int_0^T \langle H(\tilde{v}^\epsilon) \tilde{h}^\epsilon(t), \tilde{v}^\epsilon(t)-\tilde{v}(t)\rangle\,dt\right]=0\,, \\
\displaystyle \lim_{\epsilon \goto 0} \tilde{\mathbb{E}} \left[ \int_0^T \langle  (H(\tilde{v}^\epsilon)-H(\tilde{v})) \tilde{h}^\epsilon(t), \tilde{v}(t)\rangle\,dt\right]=0\,.
\end{cases}
\end{align}
Using \eqref{conv:weak-c1-drift-1} and \eqref{conv:weak-c1-drift-2} and passing to the limit as $\epsilon \goto 0$ in \eqref{eq:conv-limit-c1}, we have
\begin{align}
\limsup_{\epsilon>0}\tilde{\mathbb{E}} \Bigg[\int_{0}^{T} \int_{D}  |\nabla  \tilde{v}^{\epsilon}(t)|^{p-2}\nabla  \tilde{v}^{\epsilon}(t) \cdot \nabla  \tilde{v}^{\epsilon}(t) \, dx \, dt \Bigg] \le \tilde{\mathbb{E}} \Bigg[\int_{0}^{T} \int_{D}   \hat{G} \cdot \nabla {\tilde{v}(t)} \, dx \, dt \Bigg]\,. \label{conv:weak-c1-drift-3}
\end{align}
Proceeding similarly as in Subsection \ref{subsec:existence-skeleton} (see also \cite[Page $194$, proof of Lemma $26$]{gv1}) and using \eqref{conv:weak-c1-drift-3}, one can identify $\hat{G}$ as  $\hat{G} = |\nabla \tilde{v}(t)|^{p-2}\nabla \tilde{v}(t)$ in $L^{p'}(\tilde{\Omega}\times D_{(0,T)})^{d}$.

\vspace{0.1cm}

As we mentioned, to finish the proof of the assertion, we need to show $\tilde{v}^{\epsilon} \rightarrow \tilde{v} $ in distribution on $\mathcal{Z}$ as $\epsilon \rightarrow 0$---which is given in the following theorem.
 \begin{thm}
$\tilde{v}^{\epsilon} \overset{D}{\rightarrow} \tilde{v} $  in $\mathcal{Z} = C([0,T];L^{2}(D))\cap L^{p}([0,T];W_{0}^{1,p}(D))$.
\end{thm}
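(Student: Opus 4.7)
The plan is to upgrade the already established strong convergence $\tilde{v}^{\epsilon}\to \tilde{v}$ in $L^{2}(\tilde{\Omega};L^{2}([0,T];L^{2}(D)))$ to convergence in $L^{2}(\tilde{\Omega};\mathcal{Z})$. Since convergence in $L^{2}$ implies convergence in probability, which in turn implies convergence in distribution on the Polish space $\mathcal{Z}$, it suffices to show
\begin{align*}
\tilde{\mathbb{E}}\Big[\sup_{t\in[0,T]}\|\tilde{v}^{\epsilon}(t)-\tilde{v}(t)\|_{L^{2}(D)}^{2}+\int_{0}^{T}\|\tilde{v}^{\epsilon}(s)-\tilde{v}(s)\|_{W_{0}^{1,p}(D)}^{p}\,ds\Big] \longrightarrow 0.
\end{align*}
This is the stochastic analogue of Theorem \ref{thm:cgs1}, where the skeleton equation for $\tilde{v}$ (with $\hat{G}=|\nabla\tilde{v}|^{p-2}\nabla\tilde{v}$ identified above) now plays the role of the limit PDE.

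The first step is to apply It\^o's formula to $\|\tilde{v}^{\epsilon}(t)-\tilde{v}(t)\|_{L^{2}(D)}^{2}$, using the equation \eqref{eq:epsilon-main} for $\tilde{v}^{\epsilon}$ on the new stochastic basis and the weak form \eqref{eq:final-cond-c1-skeleton} for $\tilde{v}$. The $p$-Laplacian difference is controlled below by $-C\|\tilde{v}^{\epsilon}-\tilde{v}\|_{W_{0}^{1,p}(D)}^{p}$ via the monotonicity inequality \eqref{inequ}; the flux-difference term is handled by Lipschitz continuity of $\vec{f}$ together with Young's inequality, absorbing a factor $\delta/2$ of the $W_{0}^{1,p}$-norm on the left by choosing $\delta$ small, at the price of a $C_{\delta}\|\tilde{v}^{\epsilon}-\tilde{v}\|_{L^{2}}^{p'}$ term. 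The It\^o correction $\epsilon^{2}\int_{0}^{t}\|H(\tilde{v}^{\epsilon})\|_{L^{2}}^{2}\,ds$ is $O(\epsilon^{2})$ by Lemma \ref{lem:est2.1}, and the martingale part $2\epsilon\int_{0}^{t}\langle \tilde{v}^{\epsilon}-\tilde{v},H(\tilde{v}^{\epsilon})\,d\tilde{W}^{\epsilon}\rangle$ is treated by BDG as in Lemma \ref{lem:est2.1}, yielding an $O(\epsilon)$ contribution once the sup in time is taken.

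The core of the argument is the treatment of the drift difference
\begin{align*}
2\int_{0}^{t}\langle H(\tilde{v}^{\epsilon})\tilde{h}^{\epsilon}-H(\tilde{v})h,\,\tilde{v}^{\epsilon}-\tilde{v}\rangle\,ds,
\end{align*}
which we split as $\langle (H(\tilde{v}^{\epsilon})-H(\tilde{v}))\tilde{h}^{\epsilon},\tilde{v}^{\epsilon}-\tilde{v}\rangle+\langle H(\tilde{v})(\tilde{h}^{\epsilon}-h),\tilde{v}^{\epsilon}-\tilde{v}\rangle$, exactly as in \eqref{conv:weak-c1-drift-1}--\eqref{conv:weak-c1-drift-2}. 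The first piece is bounded by $C\int_{0}^{t}|\tilde{h}^{\epsilon}|\,\|\tilde{v}^{\epsilon}-\tilde{v}\|_{L^{2}}^{2}\,ds$ via Lipschitz of $H$; since $\tilde{h}^{\epsilon}\in S_{M}$, Cauchy--Schwarz and Gronwall will absorb this. The second piece is dominated, after Cauchy--Schwarz in time, by $2\sqrt{M}\bigl(\int_{0}^{T}\|H(\tilde{v})\|_{L^{2}}^{2}\|\tilde{v}^{\epsilon}-\tilde{v}\|_{L^{2}}^{2}\,ds\bigr)^{1/2}$, whose expectation tends to zero by the strong $L^{2}$-convergence \eqref{eq:oldres1} combined with the fourth-moment bound of Corollary \ref{cor:est} (used through Vitali's theorem to pass to the limit under $\tilde{\mathbb{E}}$).

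Combining all the estimates, taking supremum in $t$ and expectation, and applying the stochastic Gronwall lemma to
\begin{align*}
\tilde{\mathbb{E}}\Big[\sup_{s\in[0,t]}\|\tilde{v}^{\epsilon}(s)-\tilde{v}(s)\|_{L^{2}}^{2}+\int_{0}^{t}\|\tilde{v}^{\epsilon}-\tilde{v}\|_{W_{0}^{1,p}}^{p}\,ds\Big]\leq \eta(\epsilon)+C\int_{0}^{t}\tilde{\mathbb{E}}\big[\sup_{r\in[0,s]}\|\tilde{v}^{\epsilon}(r)-\tilde{v}(r)\|_{L^{2}}^{2}\big]\,ds,
\end{align*}
with $\eta(\epsilon)\to 0$, yields the desired convergence and hence $\tilde{v}^{\epsilon}\to \tilde{v}$ in distribution on $\mathcal{Z}$. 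The principal technical obstacle is the cross term involving $H(\tilde{v})(\tilde{h}^{\epsilon}-h)$: weak convergence of $\tilde{h}^{\epsilon}$ alone is insufficient, and one genuinely needs the strong $L^{2}$-convergence of $\tilde{v}^{\epsilon}-\tilde{v}$ together with the uniform higher-moment bound on $\tilde{v}$ to invoke Vitali and pass to the limit inside the expectation.
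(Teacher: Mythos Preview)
Your overall strategy coincides with the paper's: apply It\^o's formula to $\|\tilde v^{\epsilon}-\tilde v\|_{L^2}^2$, use the monotonicity inequality \eqref{inequ} for the $p$-Laplacian, Young's inequality for the flux, and BDG for the stochastic integral, then show the expectation of the $\mathcal{Z}$-norm of the difference tends to zero. This is exactly what the paper does, so in spirit your plan is correct.

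Where your write-up diverges is the closing step. The paper does \emph{not} use a Gr\"onwall argument here. After It\^o's formula it arrives at
\[
\tilde{\mathbb E}\big[\|\tilde v^{\epsilon}-\tilde v\|_{\mathcal Z}\big]\le {\bf B}_1+{\bf B}_2+{\bf B}_3,
\]
where ${\bf B}_1$ is the full drift-difference term, ${\bf B}_2$ the It\^o correction, and ${\bf B}_3$ the BDG term, and then shows each ${\bf B}_i\to 0$ directly, using only \eqref{eq:oldres1} and Lemma \ref{lem:est2.1}. In particular ${\bf B}_1$ is bounded in one stroke (no splitting) by
\[
C\big(\tilde{\mathbb E}[\sup_s\|\tilde v^{\epsilon}\|_{L^2}^2]\big)^{1/2}\big(\tilde{\mathbb E}[\textstyle\int_0^T\|\tilde v^{\epsilon}-\tilde v\|_{L^2}^2\,dt]\big)^{1/2}+ \text{(same with $\tilde v$, $h$)}\;\longrightarrow 0,
\]
which needs nothing beyond second moments and the $L^2(\tilde\Omega\times[0,T];L^2(D))$ convergence. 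Likewise the $C_\delta\|\cdot\|_{L^2}^{p'}$ term vanishes directly since $p'<2$.

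Your proposed Gr\"onwall inequality, as displayed, is not actually attainable: after taking expectation, the first drift piece $C\int_0^t|\tilde h^{\epsilon}|\,\|\tilde v^{\epsilon}-\tilde v\|_{L^2}^2\,ds$ does not yield a term of the form $C\int_0^t\tilde{\mathbb E}[\sup_{r\le s}\|\cdot\|^2]\,ds$ with a deterministic constant, because $|\tilde h^{\epsilon}|$ is random and only controlled in $L^2(0,T)$. Your Vitali argument for the second piece is also on thin ice: to pass $\tilde{\mathbb E}\big[(\int_0^T\|H(\tilde v)\|^2\|\tilde v^{\epsilon}-\tilde v\|^2\,ds)^{1/2}\big]\to 0$ via uniform integrability you would need higher than fourth-moment control of $\sup_s\|\tilde v\|$, which is not established. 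Both issues disappear if you follow the paper and simply bound the whole drift difference by Cauchy--Schwarz against $(\tilde{\mathbb E}[\int_0^T\|\tilde v^{\epsilon}-\tilde v\|_{L^2}^2\,dt])^{1/2}$, dropping Gr\"onwall altogether.
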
 
\begin{proof}
Since convergence in probability implies convergence in distribution, by Markov inequality the theorem will be proved if we able to show that
$$ \tilde{\mathbb{E}} \Big[ \left \|v^{\epsilon} - \tilde{v}  \right \|_{\mathcal{Z}} \Big] \rightarrow 0 \ \ \text{as} \ \epsilon \rightarrow 0. $$
 Proof of the above convergence is similar to the proof of Theorem \ref{thm:cgs1}  but in stochastic setting. Applying It{\^o} formula to the functional $u\mapsto \|u\|_{L^2(D)}^2$ on the difference equation of $\tilde{v}^\epsilon-\tilde{v}$ and then using Poincar{\'e} inequality together with Cauchy-Schwartz inequality, one has
 \begin{align}
 \tilde{\mathbb{E}} \Big[ \left \|v^{\epsilon} - \tilde{v}  \right \|_{\mathcal{Z}} \Big] & \le C \tilde{\mathbb{E}}\left[ \int_0^T \big\langle H(\tilde{v}^\epsilon) \tilde{h}^\epsilon(t)-
 H(\tilde{v}) h(t), v^{\epsilon} - \tilde{v} \big\rangle\,dt\right] + C\epsilon  \tilde{\mathbb{E}}\left[\int_0^T \|H(\tilde{v}^\epsilon(t))\|_{L^2(D)}^2\,dt\right] \notag \\
& \quad + C \epsilon \tilde{\mathbb{E}}\left[ \sup_{s\in [0,T]}\Big| \int_0^s \big\langle v^{\epsilon}(r) - \tilde{v}(r), H(\tilde{v}^\epsilon(r))\,dW(r)\big\rangle\Big|\right]
\equiv {\bf B}_1 + {\bf B}_2 + {\bf B}_3\,. \notag
 \end{align}
In view of the assumption \ref{A3} and Lemma \ref{lem:est2.1}, it is easy to see that ${\bf B}_2 \goto 0$ as $\epsilon \goto 0$, and
\begin{align*}
{\bf B}_1 \le C \left(  \tilde{\mathbb{E}}\left[ \int_0^T \|\tilde{v}^\epsilon(t)-\tilde{v}(t)\|_{L^2(D)}^2\,dt\right]\right)^\frac{1}{2} \goto 0 \quad \text{as $\epsilon \goto 0$~~~(by \eqref{eq:oldres1})}\,.
\end{align*}
We apply Burkholder-Davis-Gundy inequality, the assumption \ref{A3}, Lemma \ref{lem:est2.1} and \eqref{eq:oldres1} to obtain
\begin{align*}
{\bf B}_3  & \le C \epsilon  \tilde{\mathbb{E}}\left[ \left( \int_0^T 
 \|\tilde{v}^\epsilon(t)-\tilde{v}(t)\|_{L^2(D)}^2  \|\tilde{v}^\epsilon(t)\|_{L^2(D)}^2\,dt \right)^\frac{1}{2}\right] \notag \\
 & \le C \epsilon  \tilde{\mathbb{E}}\left[  \sup_{0\le t\le T} \|\tilde{v}^\epsilon(t)\|_{L^2(D)} \left( \int_0^T
  \|\tilde{v}^\epsilon(t)-\tilde{v}(t)\|_{L^2(D)}^2 \,dt \right)^\frac{1}{2}\right] \notag \\
  & \le  C \epsilon \left\{  \tilde{\mathbb{E}}\left[  \sup_{0\le t\le T} \|\tilde{v}^\epsilon(t)\|_{L^2(D)}^2\right]\right\}^\frac{1}{2}  \left\{  \tilde{\mathbb{E}}\left[ \int_0^T
  \|\tilde{v}^\epsilon(t)-\tilde{v}(t)\|_{L^2(D)}^2 \,dt \right]\right\}^\frac{1}{2}\goto 0 \quad \text{as $\epsilon \goto 0$}\,.
\end{align*}
This completes the proof.
\end{proof}

 \section{Transportation cost inequality: Proof of Theorem \ref{thm:maintci} } \label{sec:TCI}
Let $\mu$ be the law of the random field solution u(.,.) of \eqref{problem1.1}, viewed as probability measure on $X:=L^2([0,T];L^1(D)).$ Let  $\nu$ be a probability measure on $X$ such that $\nu  \ll \mu$. Define a new probability measure $\mathbb{P}^{*}$ on the filtered probability space $(\Omega, \mathcal{F}, \{ \mathcal{F}_t \}_{0 \leq t \leq T}, \mathbb{P})$ by 
\[d\mathbb{P}^{*} := \frac{d\nu}{d\mu}(u)d\mathbb{P}.\]
Denote the Radon-Nikodym derivative restricted on $\mathcal{F}_t$ by 
\[\textbf{M}_t := \frac{d\mathbb{P}^{*}}{d\mathbb{P}} \bigg \vert_{\mathcal{F}_{t} \,.}\]
Then $\textbf{M}_t$, t $\in$ [0,T] forms  a.s. continuous martingale with respect to given probability measure $\mathbb{P}$. 
An application of Girsanov theorem yields existence of an adapted process $g(s)$ such that 
\begin{itemize}
\item[i)]  $\mathbb{P}^{*}$ - a.s., $\displaystyle  \int_0^t g^{2}(s)\,ds \, < \, \infty $ for all t $\in$ [0,T], 
\item[ii)]  ${W}^*$ : [0,T] $\rightarrow$ $\mathbb{R}$ defined by
\[{W}^*(t) :=  W(t) - \int_0^t g(s)\,ds,\]
is a Brownian motion under the measure $\mathbb{P}^{*}$.
\end{itemize}
Moreover, thanks to \cite[Lemma $3.1$]{sarantsev-2019}, the martingale $\textbf{M}_t $ can be expressed as
\[\textbf{M}_t  =  \exp\bigg (\int_0^t g(s)\,dW(s) - \frac{1}{2}\int_0^t g^{2}(s)\,ds\bigg), \quad \mathbb{P}^{*}- \text{a.s}.\] 
and the relative entropy (Kullback information) of $\nu$ with respect to $\mu$ can be expressed in terms of the process $g$:
\begin{equation}\label{20}
   \mathcal{H}(\nu|\mu) =  \frac{1}{2}\mathbb{E}^*\left [\int_0^T g^{2}(s)\,ds\right], 
\end{equation}
where $\mathbb{E}^*$ stands for the expectation under the  new probability measure $\mathbb{P}^{*}$.
\vspace{0.1cm} 

Girsanov theorem and \cite[Theorem 2]{gv1} tells us that the solutions u(t) of equation \eqref{problem1.1} satisfies the following stochastic PDE under measure $\mathbb{P}^{*}$ 
\begin{equation}\label{eq:spde-girsanov-transform}
\begin{cases}
    du^g(t,x) - {\rm div}_x ( |\nabla u^g|^{p-2}\nabla u^g +\vec{f}(u^g)) \, dt = H(u^g(t,x))\,dW^*(t) + H(u^g(t,x))g(t)\,dt  ~~~ \text{in} ~~ D_{(0,T)}, \\
    u^g(0,.) = u_{0}(.)~~\text{on}~~ D.
    \end{cases}
\end{equation}
 We denote the solution of \eqref{eq:spde-girsanov-transform} by $u^g$. Consider the stochastic PDE
\begin{equation}\label{eq:spde-new-probability}
\begin{cases}
    du(t,x) - {\rm div}_x ( |\nabla u|^{p-2}\nabla u +\vec{f}(u)) \, dt = H(u(t,x)) \,dW^*(t)~~\text{in} ~~ D_{(0,T)}, \\
     u(0,.) = u_{0}(.)~~~\text{on} ~~ D.
    \end{cases}
\end{equation}
Equation \eqref{eq:spde-new-probability} has a  unique strong solution u(t), cf.~ \cite[Theorem 2]{gv1}. So it follows that under measure $\mathbb{P}^{*}$, the law of (u,$u^g$) forms a coupling ($\mu,\nu$). From the definition of Wasserstein distance, one has
\[ W_2(\nu, \mu)^2 \leq \mathbb{E}^*\Bigg [ \int_{0}^T \bigg( \int_{D} |u(t,x) - u^g(t,x)|\,dx \bigg)^2 dt  \Bigg ]. \]
In view of \eqref{20}, to prove the quadratic transportation cost inequality, it is sufficient to show that
\begin{equation}
\mathbb{E}^*\Bigg [ \int_{0}^T \bigg( \int_{D} |u(t,x) - u^g(t,x)|\,dx \bigg)^2 dt  \Bigg ] \leq C\mathbb{E}^*\bigg [ \int_0^T g^2(s)\,ds\bigg ]. \label{inq:final-tci}
\end{equation}

 Let us consider the convex approximation of the absolute value function $\zeta : \mathbb{R} \rightarrow \mathbb{R}$ as introduced in Subsection \ref{sec:uniqueness}. Now, we apply It{\^o} formula to the functional $\displaystyle \int_{D} \zeta_{\vartheta}(u(t) - u^g(t)) \, dx$ along with the integration by parts formula and have
 \begin{align}
& \int_{D} \zeta_{\vartheta}(u(t) - u^g(t)) dx \notag \\
& =  -  \int_{0}^{t}\int_{D} (|\nabla u |^{p-2}\nabla u - |\nabla u^g|^{p-2}\nabla u^g ) \cdot \nabla(u - u^g)(s) \, \zeta_{\vartheta}^{''}(u - u^g) \, dx \, ds  \notag \\
  & \quad -  \int_{0}^{t}\int_{D} (\vec{f}(u(s,x)) - \vec{f}(u^g(s,x))) \cdot \nabla(u - u^g)(s) \, \zeta_{\vartheta}^{''}(u - u^g) \, dx \, ds \notag \\
  &  \qquad + \ \int_{0}^{t}\int_{D} \zeta_{\vartheta}^{'}(u - u^g) H(u^g) \, g(s)  \, dx \, ds +
    \int_{0}^{t}\int_{D} \zeta_{\vartheta}^{'}(u - u^g) (H(u) - H(u^g)) \,dx \, dW (s)\notag \\
    & \qquad \quad+ \frac{1}{2}\int_0^t  \int_{D}  \zeta_{\vartheta}^{''}(u - u^g) (H(u) - H(u^g))^2\,dx\,ds\,. \label{eq:tci-1}
 \end{align}
 By using the inequality $\eqref{inequ}$ and the fact that
 and $\zeta_{\vartheta}^{''} \geq 0$,  we have, for some $C>0$
 \begin{align*}
 &- (|\nabla u|^{p-2}\nabla u - |\nabla u^g|^{p-2}\nabla u^g ) \cdot \nabla(u - u^g)(s) \zeta_{\vartheta}^{''}(u - u^g) \\
 & \leq -C \, |\nabla(u - u^g)|^{p} \, \zeta_{\vartheta}^{''}(u - u^g) \leq 0.
 \end{align*}

 Thus, equation \eqref{eq:tci-1} reduces to the following inequality
  \begin{align}
& \int_{D} \zeta_{\vartheta}(u(t) - u^g(t)) dx \notag \\
  & \le -  \int_{0}^{t}\int_{D} (\vec{f}(u(s,x)) - \vec{f}(u^g(s,x))) \cdot \nabla(u - u^g)(s) \, \zeta_{\vartheta}^{''}(u - u^g) \, dx \, ds \notag \\
  &  \qquad + \ \int_{0}^{t}\int_{D} \zeta_{\vartheta}^{'}(u - u^g) H(u^g) \, g(s)  \, dx \, ds +
    \int_{0}^{t}\int_{D} \zeta_{\vartheta}^{'}(u - u^g) (H(u) - H(u^g)) \,dx \, dW (s)\notag \\
    & \qquad \quad+ \frac{1}{2} \int_0^t \int_{D}  \zeta_{\vartheta}^{''}(u - u^g) (H(u) - H(u^g))^2\,dx\,ds\,. \label{eq:tci-2}
 \end{align}
 Squaring both sides of \eqref{eq:tci-2} and then taking expectation, we have, for $t\in [0,T]$
 \begin{align} 
   & \mathbb{E}^* \left [ \left(\int_{D} \zeta_{\vartheta}(u(t) - u^g(t)) dx\right)^2 \right] \notag \\
    & \leq  4\,\mathbb{E}^* \left [ \left(\int_{0}^{t}\int_{D} (\vec{f}(u(s,x)) - \vec{f}(u^g(s,x))) \cdot \nabla(u - u^g)(s,x) \, \zeta_{\vartheta}^{''}(u - u^g) \, dx \, ds\right)^2  \right ] \notag \\
    &\quad  +  4\,\mathbb{E}^* \left [ \left(\int_{0}^{t}\int_{D} \zeta_{\vartheta}^{'}(u - u^g) H(u^g) \, g(s)  \, dx \, ds \right)^2\right] 
     + 4 \mathbb{E}^* \left [\int_{0}^{t}\left( \int_{D} \zeta_{\vartheta}^{'}(u - u^g) (H(u) - H(u^g))\,dx\right)^2 ds  \right ] \notag \\
    &  \qquad \quad+ \mathbb{E}^*\left[ \left( \int_0^t \int_D \zeta_{\vartheta}^{''}(u - u^g) (H(u) - H(u^g))^2\,dx\,ds\right)^2\right]
    \equiv \mathcal{M}_{1}  + \mathcal{M}_2 + \mathcal{M}_3  + \mathcal{M}_4\,, \label{esti:m-sum-tci}
 \end{align}
 where in the first inequality, we have used It{\^o}-isometry. 
 Since $\big(\zeta_{\vartheta}^{''}(r)\big)^2 \leq \frac{(K_2)^2}{{\vartheta}^2}\textbf{1}_{  \left \{{|r| \leq \vartheta}  \right \}} $, we have, thanks to the assumption \ref{A2} 
 \begin{align*}
     & \big\{(\vec{f}(u) - \vec{f}(u^g)) \cdot \nabla (u(s,x) - u^g(s,x)) \zeta_{\vartheta}^{''}(u - u^g) \big\}^{2} \\
     &\leq c_f^{2} |u - u^g|^{2} \ |\nabla(u(s,x) - u^g(s,x))|^{2} \ \frac{(K_2)^{2}}{\vartheta^{2}} \textbf{1}_{\{|u - u^g| \leq \vartheta\}} \rightarrow 0 \ \ (\vartheta \rightarrow 0)
 \end{align*}
 for almost every $(s,x) \in D_{(0,T)}$. Moreover, we see that
 $$ |\vec{f}(u) - \vec{f}(u^g)|^{2}  \left | \nabla (u - u^g) \right |^{2} \big(\zeta_{\vartheta}^{''}(u - u^g) \big)^{2} \leq C\, \left | \nabla(u - u^g) \right |^{2} \in L^1(\Omega \times D_{(0,T)}).$$
 Thus, by dominated convergence theorem, we conclude that 
 \begin{align}
 \mathcal{M}_1 \rightarrow 0 \quad \text{as}\quad \vartheta \rightarrow 0. \label{esti:m1-tci}
 \end{align}
 Recalling the bound $|\zeta_\vartheta^{'}| \leq 1$ and applying H\"{o}lder's inequality together with the boundedness of $H(\cdot)$, 
 we obtain 
 \begin{align}
    |\mathcal{M}_2| \leq C \mathbb{E}^* \left [\left ( \int_{0}^{t}\int_{D} |H(u^g)|^2 \, dx \, ds \right ) \left (\int_{0}^{t}\int_{D} |g(s)|^2  \, dx \, ds \right ) \right ]  
     \leq C(T, |D|) \,\mathbb{E}^* \left[\int_{0}^{t} g^{2}(s)  \, ds\right]\,. \label{esti:m2-tci}
 \end{align}
 Thanks to the assumption \ref{A3}, and boundedness of $\zeta_\vartheta^{'}$, we estimate $\mathcal{M}_3$ as
 \begin{align}
      |\mathcal{M}_3| \, & \leq 4\, \mathbb{E}^* \left [ \int_{0}^{t} \bigg( \int_{D} |H(u) - H(u^g)|\, dx \bigg)^2 \, ds \right ] \leq C \mathbb{E}^* \left [ \int_{0}^{t} \bigg( \int_{D} |u - u^g| \, dx \bigg)^2 \, ds \right ]\,. \label{esti:m3-tci}
 \end{align}
 Again by using the assumption \ref{A3}, Jensen's inequality and \eqref{esti:approx}, we see that
 \begin{align}
 \mathcal{M}_4 \le C \mathbb{E}^*\left[ \int_0^t \int_D  \frac{(K_2)^2}{{\vartheta}^2}\textbf{1}_{  \left \{{|r| \leq \vartheta}  \right \}} |u-u^g|^4\,dx\,ds\right]
 \le C \vartheta^2 \goto 0 \quad(\vartheta \goto 0)\,. \label{esti:m4-tci}
 \end{align}
 In view of \eqref{esti:approx}, one can easily check that
 \begin{align}
  \mathbb{E}^*\left[ \left (\int_{D} |u-u^g| \,dx  \right )^2\right] & \leq   2 \mathbb{E}^*\left[ \Big(\int_{D} \zeta_{\vartheta}(u-u^g) \,dx \Big)^{2} \right] + C \vartheta^{2}\,.
  \label{esti:m0-tci}
 \end{align}
 Using \eqref{esti:m1-tci}-\eqref{esti:m0-tci} in \eqref{esti:m-sum-tci} and sending $\vartheta \goto 0$ in the resulting inequality, we have, for all $t\in [0,T]$
 \begin{align*}
  \mathbb{E}^*\left[ \left (\int_{D} |u(t)-u^g(t)| \,dx \right )^2\right] \le C  \,\mathbb{E}^* \left[\int_{0}^{t} g^{2}(s)  \, ds\right] + C\, \int_0^t  \mathbb{E}^*\left[ \left (\int_{D} |u(s)-u^g(s)| \,dx \right )^2\right] \,ds\,.
 \end{align*}
 An application of Gr{\"o}nwall's lemma then implies that
 \begin{align*}
 \sup_{0\le t\le T}   \mathbb{E}^*\left[ \left (\int_{D} |u(t)-u^g(t)| \,dx \right )^2\right]  \le C \mathbb{E}^* \left[\int_{0}^{T} g^{2}(s)  \, ds\right]\,.
 \end{align*}
Thus, there exists a constant $C>0$ such that \eqref{inq:final-tci} holds. This completes the proof of Theorem  \ref{thm:maintci}.

\vspace{1cm}

\noindent{\bf Acknowledgement:} The first author would like to acknowledge the financial support by CSIR, India.
The second author is supported by Department of Science and Technology, Govt. of India-the INSPIRE fellowship~(IFA18-MA119).

\vspace{1cm}

\noindent{\bf Data availability:}  Data sharing is not applicable to this article as no datasets were generated or analyzed during the current study.

\vspace{1cm}

\noindent{\bf Conflict of interest:}  The authors have not disclosed any competing interests.

 \vspace{1cm}

\end{document}